\theoremstyle{definition}
\newtheorem{theorem}{Theorem}[section]
\newtheorem{corollary}[theorem]{Corollary}
\newtheorem{lemma}[theorem]{Lemma}
\newtheorem{proposition}[theorem]{Proposition}
\newtheorem{conjecture}[theorem]{Conjecture} 
\newtheorem{definition}[theorem]{Definition}
\numberwithin{equation}{section}
\numberwithin{theorem}{section}
\definecolor{darkgreen}{rgb}{0,0.5,0}
\definecolor{darkblue}{rgb}{0,0.1,0.5}
\newcommand{\Id}{\mathrm{Id}}
\newcommand{\Hom}{\mathrm{Hom}}
\newcommand{\mfB}{\mathfrak{B}}
\newcommand{\mfg}{\mathfrak{g}}
\newcommand{\mfh}{\mathfrak{h}}
\newcommand{\mcA}{\mathcal{A}}
\newcommand{\mcB}{\mathcal{B}}
\newcommand{\mcC}{\mathcal{C}}
\newcommand{\mcF}{\mathcal{F}}
\newcommand{\mcG}{\mathcal{G}}
\newcommand{\mcH}{\mathcal{H}}
\newcommand{\mcL}{\mathcal{L}}
\newcommand{\mcW}{\mathcal{W}}
\newcommand{\mbbC}{\mathbb{C}}
\newcommand{\mbbZ}{\mathbb{Z}}
\newcommand{\C}{\mathcal{C}}
\newcommand\doi[2]{\href{http://dx.doi.org/#1}{#2}}
\newcommand{\U}{\overline{U}_q^H(\mfg)}
\newcommand{\irred}[1]{S^{#1}}
\def\thm@space@setup{%
  \thm@preskip=3mm 
  \thm@postskip=\thm@preskip 
}
\begin{document}

\title{Uprolling Unrolled Quantum Groups}
\author{Thomas Creutzig and Matthew Rupert}
\maketitle

\begin{abstract}

We construct families of commutative (super) algebra objects in the category of weight modules for the unrolled restricted quantum group $\overline{U}_q^H(\mfg)$ of a simple Lie algebra $\mfg$ at roots of unity, and study their categories of local modules. We determine their simple modules and derive conditions for these categories being finite, non-degenerate, and ribbon. Motivated by numerous examples in the $\mfg=\mathfrak{sl}_2$ case, we expect some of these categories to compare nicely to categories of modules for vertex operator algebras. We focus in particular on examples expected to correspond to the higher rank triplet vertex algebra $W_Q(r)$ of Feigin and Tipunin \cite{FT} and the $B_Q(r)$ algebras of \cite{C1}.

\end{abstract}

\section{Introduction}

The unrolled quantum groups $U_q^H(\mfg)$ associated to a finite dimensional simple complex Lie algebra $\mfg$ were initially introduced and studied, primarily at odd roots of unity, as examples for producing link invariants through their categories of weight modules (see \cite{GP1,GP2,GPT}). 
A different perspective comes from the conjectural relation to representation categories of vertex operator algebras, which can be viewed as rich and challenging variants of the famous Kazhdan-Lusztig correspondence, i.e. the equivalence of rigid tensor categories of quantum groups and affine vertex algebras at generic $q$ and generic level respectively \cite{KL1, KL2, KL3, KL4}.
These
connections to vertex operator algebras appeared first through the proposed relation of the restricted quantum group of $\mathfrak{sl}_2$ at even root of unity 
and triplet vertex algebras \cite{FGST1, FGST2, FGST3, FGST4} pioneered by Feigin, Gainutdinov, Semikhatov and Tipunin. Cases beyond $\mathfrak{sl}_2$ have been studied by Lentner and Flandoli \cite{FL, L}. Proofs of these conjectures seem to be currently out of reach and so far the case of $\mathfrak{sl}_2$ at $q=\sqrt{-1}$ is best understood \cite{Run, GR}. 

Even for $\mathfrak{sl}_2$ establishing a braided tensor category structure on the category of weight modules of the restricted quantum group at even root $q= e^{\pi i/r}$ is difficult since the standard tensor product is not braidable \cite{KS}.
This issue has only been solved recently, i.e. it is the representation category for a factorizable quasi Hopf algebra whose underlying algebra is that of the restricted quantum group of $\mathfrak{sl}_2$ \cite{CGR}. The proof idea is due to vertex algebra observations. Namely the triplet vertex algebra $W_{A_1}(r)$  is a simple current extension\footnote{Simple current is the vertex algebra terminology for an invertible object.} of the singlet vertex algebra $W^0_{A_1}(r)$ . Since the category of weight modules of the unrolled quantum group is supposed to be equivalent to a category of modules of the singlet algebra this suggests that one can perform a simple current extension in the category of weight modules of the unrolled quantum group and the category of local modules of this extension gives us a rigid braided tensor category whose underlying algebra is the restricted quantum group. We like to call this procedure uprolling. This gives an example where quantum groups have benefitted from the connection to vertex algebras. Conversely vertex algebras benefit immensely from this relation and there are numerous further connections between the unrolled quantum groups and vertex operator algebras which have been established in the $\mfg=\mathfrak{sl}_2$ case at even roots of unity (see \cite{CMR,CGR,ACKR}) via the construction of braided tensor categories which compare nicely to certain representation categories of the corresponding vertex operator algebras. These categories were constructed as the categories of local modules for commutative algebra objects, and it is expected that these connections should extend to the higher rank cases as well. 

The category of weight modules $\mathrm{Rep}\,\overline{U}_q^H(\mfg)^{wt}$ (see Subsection \ref{subunrolled}) for the unrolled restricted quantum group was studied at arbitrary roots of unity in \cite{R}. In this article, we classify commutative algebra objects and supercommutative superalgebra objects built from simple currents (see Subsection \ref{subcurrents}) in $\mathrm{Rep}\,\overline{U}_q^H(\mfg)^{wt}$ and study their categories of local modules. This yields a wealth of examples of non-degenerate ribbon categories of both finite and non-finite type. Finite tensor categories with similar ideas have been constructed in \cite{N, GLO}. We now present our results and then explain the motivation from and application for vertex algebras and physics.

\subsection{Results}

Let $\mathcal{B}$ be a braided tensor category whose objects carry the structure of a vector space. Following conformal field theory language a simple object in $\mathcal B$ is called a simple current if it is invertible with respect to the tensor product. The tensor product of two simple currents is a simple current and so the Grothendieck ring of the pointed subcategory of simple currents of $\mathcal B$ is the group algebra of an abelian group. We assume this group to be free and associate to it a lattice $\mathcal L$ with quadratic form $\langle - , - \rangle : \mathcal L \times \mathcal L \rightarrow \mathbb Q/\ell \mathbb Z$ for some fixed integer $\ell$.  
The simple current associated to $\lambda$ in $\mathcal L$ is denoted by $\mbbC_\lambda$ and we assume this quadratic form determines the braiding of simple currents, e.g. 
 $c_{\mbbC_{\lambda},\mbbC_{\mu}}(v_{\lambda} \otimes v_{\mu})=q^{\langle \lambda, \mu \rangle}v_{\mu} \otimes v_{\lambda}$ for a primitive $\ell$-th root of unity q. For any lattice $L \subset \mathcal{L}$, define the object 
\[ \mathcal{A}_L:=\bigoplus\limits_{\lambda \in L} \mbbC_{\lambda} \in \mcB^{\oplus}.\]
Then we have the following classification result:
\begin{theorem}
$\mcA_L$ is an associative algebra object for all $L \subset \mathcal{L}$. $\mcA_L$ is commutative if and only if $\sqrt{2/\ell}\, L$ is an even lattice, that is if and only if $\langle \lambda,\lambda \rangle \in \ell \mbbZ$ and $2\langle \lambda,\mu \rangle \in \ell \mbbZ$ 
for all $\lambda,\mu \in L$.
\end{theorem}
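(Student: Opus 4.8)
The plan is to reduce everything to scalar data attached to the lattice. Since each $\mbbC_\lambda$ is a simple current, the fusion $\mbbC_\lambda\otimes\mbbC_\mu\cong\mbbC_{\lambda+\mu}$ holds and $\Hom(\mbbC_\lambda\otimes\mbbC_\mu,\mbbC_{\lambda+\mu})$ is one-dimensional; fixing generators $v_\lambda$ I encode the multiplication on $\mcA_L$ by scalars $\eps(\lambda,\mu)\in\mbbC^\times$ via $m(v_\lambda\otimes v_\mu)=\eps(\lambda,\mu)\,v_{\lambda+\mu}$. First I would record that on these one-dimensional weight modules the associativity constraint of $\mathrm{Rep}\,\U^{wt}$ acts as the identity, since the category is that of modules over an honest Hopf algebra (in contrast to the quasi-Hopf subtleties of the non-unrolled case recalled in the introduction), so the associator is the underlying vector-space associator. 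Associativity of $m$ is then equivalent to the $2$-cocycle identity $\eps(\mu,\nu)\eps(\lambda,\mu+\nu)=\eps(\lambda,\mu)\eps(\lambda+\mu,\nu)$. Every bimultiplicative $\eps$ satisfies this, so choosing any bicharacter (for instance $\eps\equiv1$) produces an associative structure for every $L$, settling the first assertion.

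For commutativity I would compute $m\circ c_{\mbbC_\lambda,\mbbC_\mu}$ on $v_\lambda\otimes v_\mu$ using the given braiding, obtaining $q^{\langle\lambda,\mu\rangle}\eps(\mu,\lambda)\,v_{\lambda+\mu}$. Hence $m\circ c=m$ is equivalent to the family of equations $\eps(\lambda,\mu)=q^{\langle\lambda,\mu\rangle}\eps(\mu,\lambda)$ for all $\lambda,\mu\in L$. Necessity of the two stated conditions falls out immediately: setting $\mu=\lambda$ forces $q^{\langle\lambda,\lambda\rangle}=1$, i.e. $\langle\lambda,\lambda\rangle\in\ell\mbbZ$; and substituting the $\lambda\leftrightarrow\mu$ transpose of the equation back into itself, using the symmetry of $\langle-,-\rangle$, forces $q^{2\langle\lambda,\mu\rangle}=1$, i.e. $2\langle\lambda,\mu\rangle\in\ell\mbbZ$.

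For sufficiency I would observe that under these two conditions $\beta(\lambda,\mu):=q^{\langle\lambda,\mu\rangle}$ is a bicharacter valued in $\{\pm1\}$ (as $q^{2\langle\lambda,\mu\rangle}=1$) and is alternating, since $\beta(\lambda,\lambda)=1$. It then suffices to realize $\beta$ as the commutator $\eps(\lambda,\mu)\eps(\mu,\lambda)^{-1}$ of a genuine $2$-cocycle: I would fix an ordered basis $e_1,\dots,e_n$ of $L$, set $\eps(e_i,e_j)=\beta(e_i,e_j)$ for $i>j$ and $\eps(e_i,e_j)=1$ otherwise, and extend bimultiplicatively. Bimultiplicativity yields the cocycle identity, hence associativity, while the alternating property of $\beta$ guarantees $\eps(\lambda,\mu)\eps(\mu,\lambda)^{-1}=\beta(\lambda,\mu)$ first on basis pairs and then, since both sides are bimultiplicative in each slot, on all of $L$. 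Finally I would translate into lattice language: for $x=\sqrt{2/\ell}\,\lambda$ one has $\langle x,x\rangle=\tfrac{2}{\ell}\langle\lambda,\lambda\rangle$ and $\langle\sqrt{2/\ell}\,\lambda,\sqrt{2/\ell}\,\mu\rangle=\tfrac{2}{\ell}\langle\lambda,\mu\rangle$, so that $2\langle\lambda,\mu\rangle\in\ell\mbbZ$ says $\sqrt{2/\ell}\,L$ is integral and $\langle\lambda,\lambda\rangle\in\ell\mbbZ$ says it is even.

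The step I expect to be the main obstacle is pinning down that the associativity constraint acts trivially on the simple currents, so that associativity of $m$ is clean cocycle data rather than a twisted-cocycle condition against a nontrivial associator; this requires a concrete description of the associator of $\mathrm{Rep}\,\U^{wt}$ on one-dimensional weight modules. A secondary subtlety is the well-definedness of $q^{\langle\lambda,\mu\rangle}$ given that $\langle-,-\rangle$ takes values in $\QQ/\ell\mbbZ$, together with the verification that the basis-wise definition of $\eps$ extends consistently to all of $L$; both reduce to the bimultiplicativity bookkeeping set up above, and the apparent tension between the symmetric form $\langle-,-\rangle$ and the antisymmetric commutator of $\eps$ is resolved precisely because the two conditions force $\beta$ to be $\{\pm1\}$-valued and alternating.
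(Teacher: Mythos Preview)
Your argument is correct and lands on literally the same cocycle as the paper: your bimultiplicative $\eps$ with $\eps(e_i,e_j)=q^{\langle e_i,e_j\rangle}$ for $i>j$ and $1$ otherwise is precisely the paper's normalized $t'_{\lambda,\mu}=\prod_k q^{\langle\lambda^{>k},\mu^k\rangle}$. The difference is in how you get there. The paper starts from an \emph{arbitrary} (nonzero) system of structure constants $t$, pushes it by a coboundary to the canonical $t'$, and then verifies associativity and the commutativity condition by explicit basis expansion. You instead observe directly that any bicharacter is a $2$-cocycle, and that once the hypotheses force $\beta(\lambda,\mu)=q^{\langle\lambda,\mu\rangle}$ to be $\{\pm1\}$-valued with $\beta(\lambda,\lambda)=1$, it can be realized as the commutator of such a bicharacter; this is cleaner and avoids the bookkeeping. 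What the paper's longer route buys is a uniqueness statement you do not prove: any associative unital structure on $\mcA_L$ with nonzero constants is isomorphic to the one constructed, which is what yields the paper's subsequent Corollary that the product scalars are nonvanishing (used later for simplicity of induced modules). Your flagged worry about the associator is exactly right and is treated the same way in the paper, namely implicitly: the standing hypothesis that objects of $\mcB$ carry $\mbbC$-vector-space structure is what lets both arguments write the associativity of $m$ as a bare cocycle identity rather than a twisted one.
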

Let $L \subset L^{\mu} \subset \mcL$ be the lattice generated by $\mu \in \mcL$ and $L$. We can then define the associated algebra object $\mcA_{L^{\mu}}:= \bigoplus_{\lambda \in L^{\mu}} \mbbC_{\lambda}$, and we have the following proposition (Proposition \ref{superalg} and Corollary \ref{superalgclass}):
\begin{proposition}
Let $L \subset \mcL$ such that $\mcA_L$ is commutative and $\mu \in \mcL$ such that $\mu \not \in L$ and $2 \mu \in L$, then $\mcA_{L^{\mu}}$ is a superalgebra. $\mcA_{L^{\mu}}$ is supercommutative if and only if
\begin{align*}
2\langle \mu, \mu \rangle \in \ell \mbbZ \setminus 2\ell \mbbZ  \qquad \text{and} \qquad  2\langle \mu, \lambda \rangle  \in \ell \mbbZ 
\end{align*}
for all $\lambda\in L$, and all supercommutative superalgebras $A=A^{\bar{0}} \oplus A^{\bar{1}}$ which are direct sums of simple currents such that $A^{\bar{0}}=\mcA_L$ for some lattice $L \subset \mcL$ and $A^{\bar{1}}$ is a non-trivial simple object in $\mathrm{Rep}\,A^{\bar{0}}$ take this form.
\end{proposition}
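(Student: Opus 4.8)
The plan is to establish the three assertions in turn---the superalgebra structure, the supercommutativity criterion, and the classification---reducing each to the behaviour of the braiding phases $q^{\langle\lambda,\nu\rangle}$ on the two cosets comprising $L^{\mu}$. First I would record the superalgebra structure. Since $L^{\mu}$ is itself a lattice, the Theorem already endows $\mcA_{L^{\mu}}=\bigoplus_{\nu\in L^{\mu}}\mbbC_{\nu}$ with an associative product. The hypotheses $2\mu\in L$ and $\mu\notin L$ say exactly that $L^{\mu}/L\cong\mbbZ/2\mbbZ$, so $L^{\mu}=L\sqcup(\mu+L)$; setting $A^{\bar{0}}=\mcA_L$ and $A^{\bar{1}}=\bigoplus_{\nu\in\mu+L}\mbbC_{\nu}$, additivity of the index in $\mbbC_{\lambda}\otimes\mbbC_{\nu}\to\mbbC_{\lambda+\nu}$ gives $A^{\bar{i}}A^{\bar{j}}\subseteq A^{\overline{i+j}}$, so $\mcA_{L^{\mu}}$ is a superalgebra.

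Next, the supercommutativity criterion (here and below I take the bilinear form symmetric, as in the commutative case). Writing the product as $m(v_{\lambda}\otimes v_{\nu})=\epsilon(\lambda,\nu)v_{\lambda+\nu}$ with $\epsilon$ an associativity $2$-cocycle, the relation $m=(-1)^{|\lambda||\nu|}m\circ c$ on homogeneous components reads $\epsilon(\lambda,\nu)=(-1)^{|\lambda||\nu|}q^{\langle\lambda,\nu\rangle}\epsilon(\nu,\lambda)$. Exactly as in the proof of the Theorem, a cocycle on the free abelian group $L^{\mu}$ with this behaviour and restricting to the chosen one on $L$ exists if and only if the prescribed commutator $b(\lambda,\nu)=(-1)^{|\lambda||\nu|}q^{\langle\lambda,\nu\rangle}$ is an alternating bicharacter. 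As $b$ is automatically bimultiplicative, it suffices to impose $b(\eta,\eta)=1$ for all $\eta\in L^{\mu}$. For $\eta\in L$ this is the commutativity of $\mcA_L$, already assumed. For $\eta=\mu+\lambda'$ in the odd coset one computes $b(\eta,\eta)=-q^{\langle\mu,\mu\rangle}q^{2\langle\mu,\lambda'\rangle}q^{\langle\lambda',\lambda'\rangle}=-q^{\langle\mu,\mu\rangle}q^{2\langle\mu,\lambda'\rangle}$, using $\langle\lambda',\lambda'\rangle\in\ell\mbbZ$. Setting this equal to $1$ at $\lambda'=0$ forces $q^{\langle\mu,\mu\rangle}=-1$, i.e. $2\langle\mu,\mu\rangle\in\ell\mbbZ\setminus2\ell\mbbZ$, and then for general $\lambda'$ forces $q^{2\langle\mu,\lambda'\rangle}=1$, i.e. $2\langle\mu,\lambda\rangle\in\ell\mbbZ$ for all $\lambda\in L$; conversely these two conditions make $b$ alternating. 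This is the stated equivalence.

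Finally, the classification. Let $A=A^{\bar{0}}\oplus A^{\bar{1}}$ be as hypothesized, with $A^{\bar{0}}=\mcA_L$ and $A^{\bar{1}}=\bigoplus_{\nu\in S}\mbbC_{\nu}$ a nontrivial simple $A^{\bar{0}}$-module. The module axiom $A^{\bar{0}}A^{\bar{1}}\subseteq A^{\bar{1}}$ gives $L+S\subseteq S$, so $S$ is a union of $L$-cosets; were it to contain two distinct cosets $\mu_1+L,\mu_2+L$, then each $\bigoplus_{\nu\in\mu_i+L}\mbbC_{\nu}$ would be a proper nonzero $\mcA_L$-submodule, contradicting simplicity. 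Hence $S=\mu+L$ for a single $\mu$, so $A=\mcA_{L^{\mu}}$. The grading $A^{\bar{1}}A^{\bar{1}}\subseteq A^{\bar{0}}$ forces $2\mu\in L$, nontriviality forces $\mu\notin L$, and supercommutativity of $A$ then yields the two numerical conditions by the previous step, so $A$ has the claimed form.

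The principal subtlety is the odd--odd sign: supercommutativity, unlike commutativity, demands the self-braiding value $q^{\langle\mu,\mu\rangle}=-1$ on the odd part, a quadratic-refinement condition whose very consistency---existence of the alternating commutator $b$, equivalently of an associativity cocycle on $L^{\mu}$ restricting to the one on $L$---already entails the even--odd condition $2\langle\mu,\lambda\rangle\in\ell\mbbZ$. Verifying that these mesh, rather than the essentially formal grading and module-theoretic arguments, is where the work lies.
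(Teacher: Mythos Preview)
Your proof is correct and covers all three parts, but it diverges from the paper's in method at two places.

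For the supercommutativity criterion, the paper works concretely with the explicit normalized structure constants $t'_{\lambda,\nu}$ obtained in the proof of the Theorem (their formula $t'_{\lambda,\nu}=\prod_k q^{\langle\lambda^{>k},\nu^k\rangle}$), derives relations such as $t_{\mu+\lambda_1,\mu+\lambda_2}=q^{\langle\mu,\lambda_2\rangle}t_{\lambda_1,\lambda_2}$, and then substitutes into the three supercommutativity equations to extract the numerical conditions. You instead invoke the cohomological principle that a $2$-cocycle on a free abelian group with prescribed commutator exists precisely when that commutator is an alternating bicharacter, and then check alternation of $b(\lambda,\nu)=(-1)^{|\lambda||\nu|}q^{\langle\lambda,\nu\rangle}$ on the odd coset. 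Your route is cleaner and makes the structural reason for the conditions transparent; the paper's is more hands-on and self-contained, essentially reproving that cohomological fact in situ rather than citing it.

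For the classification, the paper goes through a separate proposition (their result that every simple object in $\mathrm{Rep}\,\mcA_L$ is the induction of a simple in $\mcB$) to identify $A^{\bar{1}}\cong\mcF(\mbbC_{\mu})$; you bypass this and argue directly that the index set $S$ of $A^{\bar{1}}$ is a single $L$-coset. Your argument is more direct, but one step deserves a sentence of justification: the claim $L+S\subseteq S$ uses that the action $\mbbC_{\lambda}\cdot\mbbC_{\nu}$ is nonzero for every $\lambda\in L$ and $\nu\in S$, which follows from the unit axiom, associativity of the module action, and the nonvanishing of the structure constants on $\mcA_L$ (the paper's Corollary to the Theorem). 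The paper's induction-functor route handles this point automatically. Finally, both your argument and the paper's tacitly assume the odd--odd product is not identically zero when concluding $2\mu\in L$; this is a shared assumption, not a defect of your approach relative to theirs.
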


Given an additional assumption, which holds in the examples we consider, irreducible objects in $\mathrm{Rep}\,\mcA_{L^{\mu}}$ are given by the action of the induction functor $\mcF:\mcB \to \mathrm{Rep}\mcA_{L^{\mu}}$ (Definition \ref{indfunctor}) on irreducibles in $\mcB$ (Proposition \ref{simpleinduction}):
\begin{proposition} Suppose every indecomposable object in $\mcB$ has a simple subobject. Then
$N \in \mathrm{Rep}^0\mcA_{L^{\mu}}$ is simple if and only if $N \cong \mcF(M)$ for a simple object $M \in \mcB$.
\end{proposition}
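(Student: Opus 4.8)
The plan is to exploit the free--forgetful adjunction between the induction functor $\mcF$ and the forgetful functor $\mcG:\mathrm{Rep}\,A\to\mcB$ (writing $A:=\mcA_{L^{\mu}}$ throughout), under which $\Hom_{A}(\mcF(M),N)\cong\Hom_\mcB(M,\mcG(N))$ naturally in $M$ and $N$. The two implications are handled separately, and the harder structural input is the simplicity of $\mcF(M)$ for simple $M$; once this is available, the remaining implication follows formally from the adjunction together with the hypothesis on simple subobjects.

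First I would prove that $\mcF(M)$ is simple as an $A$-module whenever $M\in\mcB$ is simple; this gives the ``if'' direction immediately, since a module isomorphic to a simple one is simple and $\mathrm{Rep}^0 A$ is closed under subobjects in $\mathrm{Rep}\,A$. Forgetting the action, $\mcG\mcF(M)=\bigoplus_{\lambda\in L^{\mu}}\mbbC_\lambda\otimes M$, and each summand $\mbbC_\lambda\otimes M$ is simple because $\mbbC_\lambda$ is invertible. The key observation is that these summands are pairwise non-isomorphic: tensoring by $\mbbC_\lambda$ shifts the $\mfh$-weight by $\lambda$, so $\mbbC_\lambda\otimes M\cong\mbbC_{\lambda'}\otimes M$ forces $\lambda=\lambda'$. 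Hence $\mcG\mcF(M)$ is a (possibly infinite) direct sum of pairwise non-isomorphic simple objects, and any subobject is the direct sum over a subset $S\subseteq L^{\mu}$ of the corresponding summands. If $K\subseteq\mcF(M)$ is a nonzero $A$-submodule, then stability under the action of each invertible component $\mbbC_\nu\subseteq A$ ($\nu\in L^{\mu}$), which carries the summand indexed by $\lambda$ isomorphically onto the one indexed by $\nu+\lambda$, forces $S$ to be invariant under translation by $L^{\mu}$; thus $S=L^{\mu}$ and $K=\mcF(M)$. Therefore $\mcF(M)$ has no proper nonzero submodule.

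For the ``only if'' direction, let $N\in\mathrm{Rep}^0 A$ be simple. Then $\mcG(N)\neq 0$, so by passing to a nonzero indecomposable subobject and invoking the hypothesis that every indecomposable object of $\mcB$ has a simple subobject, I obtain a simple $M\in\mcB$ together with a nonzero morphism $M\into\mcG(N)$. Under the adjunction this corresponds to a nonzero $A$-module map $\phi:\mcF(M)\to N$. Since $N$ is simple, $\phi$ is surjective; since $\mcF(M)$ is simple by the previous step, $\phi$ is injective. Hence $\phi$ is an isomorphism and $N\cong\mcF(M)$ with $M$ simple, as required.

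The main obstacle is the simplicity of $\mcF(M)$, and within it the two points that must be handled with care in this non-semisimple, infinite-rank setting: verifying that the summands $\mbbC_\lambda\otimes M$ are genuinely pairwise non-isomorphic (so that subobjects of $\mcG\mcF(M)$ really are indexed by subsets of $L^{\mu}$, with no diagonal or extension phenomena), and checking that the $A$-action permutes these summands transitively via the isomorphisms $\mbbC_\nu\otimes\mbbC_\lambda\xrightarrow{\sim}\mbbC_{\nu+\lambda}$ supplied by the algebra structure. A secondary technical point is confirming that the adjunction isomorphism is one of $A$-modules and that the hypothesis yields a genuinely simple subobject of $\mcG(N)$ rather than merely a nonzero one.
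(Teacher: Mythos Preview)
Your argument is correct and follows essentially the same route as the paper: prove $\mcF(M)$ is simple by showing any nonzero $A$-submodule must contain every summand $\mbbC_\lambda\otimes M$ (the paper invokes Corollary~\ref{scalar}, the nonvanishing of the structure constants $t_{\lambda_1,\lambda_2}$, to ensure the $A$-action permutes the summands transitively), and then use Frobenius reciprocity together with the simple-subobject hypothesis for the converse. Your justification that the summands are pairwise non-isomorphic via an $\mfh$-weight shift imports structure from the examples of Section~4 into the abstract $\mcB$ of Section~3, but the paper's own proof glosses over exactly this point (asserting $\mcG(X)=\bigoplus_{\lambda\in T}M^\lambda$ without comment), so this is not a defect unique to your version.
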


The primary example we consider is the category of weight modules $\mcC$ over the unrolled restricted quantum groups (see Subsection \ref{subunrolled}) at root of unity $q$ of order $\ell \geq 3$. The simple objects in $\mcC$ are denoted by $\irred{\lambda}$ with $\lambda \in \mfh^*$ where $\mfh=\mathrm{Span}\{H_1,...,H_n\} \subset \overline{U}_q^H(\mfg)$. This category admits projective covers for each irreducible module, which we denote by $P^{\lambda}$, and we have the following results for induction of projective and irreducible modules (Theorem \ref{induction}):
\begin{theorem}
${}$
\begin{itemize}
\item $\mcF(P^{\lambda}) \in \mathrm{Rep}^0(\mcA_{L^{\mu}})$ if and only if $\lambda \in \frac{\ell}{2}(L^{\mu})^*$.
\item Let $X \in \C$ and let $P_X \in C$ be the projective cover of $X$. Then $\mcF(X) \in \mathrm{Rep}^0(\mcA_{L^{\mu}})$ if and only if $\mcF(P_X) \in \mathrm{Rep}^0\mcA_{L^{\mu}}$.
\item $\mcF(P^{\lambda})$ is the projective cover of $\mcF(\irred{\lambda})$ in $\mathrm{Rep}^0\mcA_{L^{\mu}}$.
\item The distinct irreducible objects in $\mathrm{Rep}^0\mcA_{L^{\mu}}$ are $\{ \mcF(\irred{\lambda}) \, | \, \lambda \in \Lambda(L^{\mu})\}$, where $\Lambda(L^{\mu}):=\frac{\ell}{2}(L^{\mu})^*/\frac{\ell}{2}(L^{\mu})^* \cap L^{\mu}$. $\mathrm{Rep}^0\mcA_{L^{\mu}}$ is finite if and only if $\mathrm{rank}(L^{\mu})=\mathrm{rank}(P)$.
\end{itemize}
\end{theorem}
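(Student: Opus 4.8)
The plan is to reduce every bullet to a single monodromy computation for the simple currents making up $\mcA_{L^\mu}$ and then to feed the result into the adjunction formalism for the induction functor $\mcF$. I would start from the standard criterion that $\mcF(M)=\mcA_{L^\mu}\otimes M$ lies in $\mathrm{Rep}^0\mcA_{L^\mu}$ exactly when the double braiding (monodromy) of $\mcA_{L^\mu}$ with $M$ is the identity, and since $\mcA_{L^\mu}=\bigoplus_{\gamma\in L^\mu}\mbbC_\gamma$ this may be tested one simple current at a time. As each $\mbbC_\gamma$ is one-dimensional the nilpotent part of the $R$-matrix annihilates it, so on the weight-$\nu$ subspace of $M$ the monodromy of $\mbbC_\gamma$ acts by the scalar $q^{2\langle\gamma,\nu\rangle}$; triviality is then the numerical condition $2\langle\gamma,\nu\rangle\in\ell\mbbZ$ for all $\gamma\in L^\mu$ and all weights $\nu$ of $M$. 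The one input I would import from the explicit description of simple currents in $\mcC$ is that $2\langle\gamma,\rho\rangle\in\ell\mbbZ$ for every $\gamma\in L^\mu$ and $\rho\in\rl$, so that this scalar depends only on the coset $\nu+\rl$.

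For the first two bullets I would use that the weights of $P^\lambda$, and of any indecomposable summand of a projective cover $P_X$, all lie in a single $\rl$-coset. By the coset-invariance above the monodromy on $\mcF(P^\lambda)$ then collapses to the single scalar $q^{2\langle\gamma,\lambda\rangle}$, so $\mcF(P^\lambda)$ is local iff $2\langle\gamma,\lambda\rangle\in\ell\mbbZ$ for all $\gamma\in L^\mu$, which is exactly $\tfrac{2}{\ell}\lambda\in(L^\mu)^*$, i.e.\ $\lambda\in\tfrac{\ell}{2}(L^\mu)^*$. The second bullet then follows because $X$ and $P_X$ meet the same $\rl$-cosets, so their coset-indexed monodromy conditions agree; one implication is even softer, as $\mcF$ is exact, $P_X\onto X$ induces $\mcF(P_X)\onto\mcF(X)$, and $\mathrm{Rep}^0\mcA_{L^\mu}$ is closed under quotients.

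For the third bullet I would note that $\mcF$ is exact and left adjoint to the exact restriction functor $\mathrm{Rep}\,\mcA_{L^\mu}\to\mcC$, hence preserves projectives; a local projective of $\mathrm{Rep}\,\mcA_{L^\mu}$ is projective in $\mathrm{Rep}^0\mcA_{L^\mu}$, so $\mcF(P^\lambda)$ is projective there. Applying $\mcF$ to $P^\lambda\onto\irred\lambda$ gives a surjection onto $\mcF(\irred\lambda)$, and Frobenius reciprocity yields $\Hom(\mcF(P^\lambda),\mcF(\irred{\lambda'}))\cong\bigoplus_{\gamma\in L^\mu}\Hom(P^\lambda,\irred{\lambda'+\gamma})$, which is one-dimensional when $\mcF(\irred{\lambda'})\cong\mcF(\irred\lambda)$ and zero otherwise; thus $\mcF(P^\lambda)$ has simple top $\mcF(\irred\lambda)$ and is its projective cover.

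For the last bullet I would combine Proposition \ref{simpleinduction}, which presents every simple of $\mathrm{Rep}^0\mcA_{L^\mu}$ as a local $\mcF(\irred\lambda)$, with the first bullet to see these are indexed by $\lambda\in\tfrac{\ell}{2}(L^\mu)^*$; the same reciprocity computation gives $\mcF(\irred\lambda)\cong\mcF(\irred{\lambda'})$ iff $\lambda'-\lambda\in L^\mu$, and since both indices lie in $\tfrac{\ell}{2}(L^\mu)^*$ the identification is by $\tfrac{\ell}{2}(L^\mu)^*\cap L^\mu$, producing $\Lambda(L^\mu)$. Finiteness amounts to $\tfrac{\ell}{2}(L^\mu)^*$ and $L^\mu$ having equal rank, which holds iff $L^\mu$ is of full rank $\mathrm{rank}(\wl)$ in $\mfh^*$; otherwise $(L^\mu)^*$ contains the positive-dimensional orthogonal complement of $\mathrm{span}_{\mbbQ}L^\mu$ and there are infinitely many simples. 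The hard part will be the first bullet, namely controlling the monodromy on the non-simple module $P^\lambda$: one must verify that the $R$-matrix action genuinely collapses to a single coset-determined scalar, which is the only step where the detailed structure of $\overline{U}_q^H(\mfg)$---its $R$-matrix, ribbon twist, and the weight support of $P^\lambda$---enters, the remaining bullets being formal consequences via the adjunction $\mcF\dashv(\text{restriction})$.
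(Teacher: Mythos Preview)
Your proposal is correct and follows essentially the same line as the paper: the monodromy with each simple current $\mbbC_\gamma$ collapses to the scalar $q^{2\langle\gamma,\nu\rangle}$ (this is exactly Lemma~\ref{ssbraid}), and the observation that $Q\subset\tfrac{\ell}{2}(L^\mu)^*$ reduces locality to a single coset condition, from which bullets one, two, and four follow just as in the paper.

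The only point where you diverge is the third bullet. The paper verifies directly that the induced surjection $\mcF(P^\lambda)\twoheadrightarrow\mcF(\irred{\lambda})$ is essential by decomposing both sides in $\mcC^\oplus$ as $\bigoplus_{\gamma\in L^\mu}P^{\lambda+\gamma}$ and $\bigoplus_{\gamma\in L^\mu}\irred{\lambda+\gamma}$ and arguing that any proper submodule hitting all of $\mcF(\irred{\lambda})$ would force some $P^{\lambda+\gamma}$ to admit a proper submodule surjecting onto $\irred{\lambda+\gamma}$. Your route instead computes $\Hom_{\mathrm{Rep}\,\mcA_{L^\mu}}(\mcF(P^\lambda),\mcF(\irred{\lambda'}))$ via Frobenius reciprocity and reads off that the top is simple. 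Both arguments are short; yours is slightly more categorical and avoids unpacking the underlying $\mcC^\oplus$-decomposition, while the paper's is more hands-on. Either way the conclusion is immediate once projectivity of $\mcF(P^\lambda)$ is in hand.
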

Further, we can determine when the category of local modules is ribbon and has trivial M\"{u}ger center (see Definition \ref{muger}, Proposition \ref{rib}):

\begin{proposition}\label{rib2}
Let $\mcA_{L^{\mu}}$ be a supercommutative superalgebra, then 
\begin{itemize}
\item $\mathrm{Rep}^0\mcA_{L^{\mu}}$ is ribbon if $2(1-r)\langle \lambda, \rho \rangle \in \ell \mathbb{Z}$ for all $\lambda \in L$ and $2(1-r)\langle \mu, \rho \rangle \in \frac{\ell}{2}\mbbZ$.
\item Let $r=2\ell/(3+(-1)^{\ell})$. If $r\nmid 2d_i$, for all $i$, then $\mathrm{Rep}^0\mcA_{L^{\mu}}$ has non-trivial M\"{u}ger center if and only if there exists a $\lambda\in \Lambda(L^{\mu})$ such that $\langle \lambda, \gamma \rangle \in \frac{\ell}{2} \mbbZ$ for all $\gamma \in \Lambda(L^{\mu})$.
\end{itemize}
\end{proposition}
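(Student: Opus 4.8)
The plan is to treat the two assertions separately, in each case reducing a categorical statement to an explicit computation with the quadratic form on $\mcL$.

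For the ribbon claim I would invoke the general principle that the category of local modules over a (super)commutative (super)algebra $A$ in a ribbon category inherits the ribbon structure precisely when the twist $\theta$ restricts to the identity on $A$ (in the super case, to the identity on the even part $A^{\bar 0}$ together with the parity sign on $A^{\bar 1}$). Thus the task becomes computing $\theta$ on each simple-current summand $\mbbC_\lambda$ of $\mcA_{L^{\mu}}$. Using the balancing of $\mcC$, the twist acts on $\mbbC_\lambda$ by a scalar of the form $\theta_\lambda = q^{\langle\lambda,\lambda\rangle + 2(1-r)\langle\lambda,\rho\rangle}$, the factor $(1-r)$ recording the pivotal element of the unrolled quantum group. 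On the even part $\mcA_L$, commutativity already gives $\langle\lambda,\lambda\rangle\in\ell\mbbZ$, so $q^{\langle\lambda,\lambda\rangle}=1$ and $\theta_\lambda = q^{2(1-r)\langle\lambda,\rho\rangle}$, which equals $1$ exactly when $2(1-r)\langle\lambda,\rho\rangle\in\ell\mbbZ$. On the odd coset $\mu+L$, supercommutativity (Proposition~\ref{superalg}) gives $2\langle\mu,\mu\rangle\in\ell\mbbZ\setminus 2\ell\mbbZ$, so $q^{\langle\mu,\mu\rangle}=-1$ supplies exactly the fermionic sign demanded of the odd part, while $2\langle\mu,\lambda\rangle\in\ell\mbbZ$ makes the cross terms $q^{2\langle\mu,\lambda\rangle}$ trivial, so it suffices to check the generator $\mu$. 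The residual requirement $\theta_\mu\in\{\pm 1\}$ is then $2(1-r)\langle\mu,\rho\rangle\in\tfrac{\ell}{2}\mbbZ$. Combining these shows the stated hypotheses force $\theta|_A$ to be the identity up to parity, hence $\mathrm{Rep}^0\mcA_{L^{\mu}}$ is ribbon.

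For the M\"uger center, recall from Theorem~\ref{induction} that the simple objects of $\mathrm{Rep}^0\mcA_{L^{\mu}}$ are exactly $\mcF(\irred{\lambda})$ with $\lambda$ ranging over $\Lambda(L^{\mu})=\tfrac{\ell}{2}(L^{\mu})^*/\big(\tfrac{\ell}{2}(L^{\mu})^*\cap L^{\mu}\big)$; in particular locality forces every simple to be induced from a weight $\lambda\in\tfrac{\ell}{2}(L^{\mu})^*$. An object lies in the M\"uger center iff its double braiding with every simple object is trivial, so I would compute this double braiding for the induced objects. Controlled by the balancing of $\mcC$, the monodromy of $\mcF(\irred{\lambda})$ with $\mcF(\irred{\gamma})$ reduces to the scalar $q^{2\langle\lambda,\gamma\rangle}$, which is trivial iff $\langle\lambda,\gamma\rangle\in\tfrac{\ell}{2}\mbbZ$. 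Hence the transparent simple objects are precisely the $\mcF(\irred{\lambda})$ with $\langle\lambda,\gamma\rangle\in\tfrac{\ell}{2}\mbbZ$ for all $\gamma\in\Lambda(L^{\mu})$, and the center is non-trivial iff such a class exists beyond that of the unit.

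The main obstacle is justifying that the monodromy among the induced typical modules is captured by the single scalar $q^{2\langle\lambda,\gamma\rangle}$, with no further transparent objects arising from the continuous root directions of $\mcC$. This is exactly where the hypothesis $r\nmid 2d_i$ enters: it guarantees that the typical simple modules remain projective and that their tensor products are governed purely by the weight pairing, so that the balancing factors attached to the decomposition of $\irred{\lambda}\otimes\irred{\gamma}$ impose no condition beyond $\langle\lambda,\gamma\rangle\in\tfrac{\ell}{2}\mbbZ$. Ruling out these accidental transparencies, and checking that the generic directions never contribute a transparent object, is the delicate point; once it is secured, the equivalence with the stated lattice condition on $\Lambda(L^{\mu})$ follows.
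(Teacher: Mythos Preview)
Your treatment of the ribbon statement is essentially the paper's: compute $\theta_{\mbbC_\gamma}=q^{\langle\gamma,\gamma\rangle+2(1-r)\langle\gamma,\rho\rangle}$, use (super)commutativity to kill the quadratic term up to the fermionic sign, and cite the general criterion (in the paper, \cite[Proposition~2.86]{CKM}) that $\mathrm{Rep}^0A$ inherits the ribbon twist when $\theta|_A$ is the parity involution.

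For the M\"uger center you have correctly isolated the condition on simples---the monodromy of $\mcF(\irred{\lambda})$ with $\mcF(\irred{\gamma})$ is the scalar $q^{2\langle\lambda,\gamma\rangle}$, so a simple is transparent iff $\langle\lambda,\gamma\rangle\in\tfrac{\ell}{2}\mbbZ$ for all $\gamma\in\Lambda(L^{\mu})$---but you have misdiagnosed the role of the hypothesis $r\nmid 2d_i$. It has nothing to do with projectivity of typical modules or with the monodromy computation; the latter goes through unconditionally because the braiding on anything tensored with a simple current collapses to $\tau\circ\mathscr{H}$ (Lemma~\ref{ssbraid}), and the braiding of induced objects descends from $\mcC$.

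The genuine issue your argument does not address is that transparency at the level of simples is not enough: the M\"uger center could still contain non-split self-extensions of the unit $\mcA_{L^{\mu}}$. This is exactly where $r\nmid 2d_i$ enters in the paper. It is used in Lemma~\ref{ext} to show $\mathrm{Ext}^1_{\mcC}(\mbbC_{\lambda_1},\mbbC_{\lambda_2})=0$ for all simple currents $\lambda_1,\lambda_2\in\mcL$ (the point being that if $r\mid 2d_i$ then $[2]_i=0$ and a two-step Verma quotient produces a non-split extension). One then argues: any transparent indecomposable has all composition factors transparent, hence isomorphic to $\mcA_{L^{\mu}}$; restricting a putative non-split extension $0\to\mcA_{L^{\mu}}\to N\to\mcA_{L^{\mu}}\to 0$ to $\mcC^{\oplus}$ and applying Lemma~\ref{ext} summand by summand shows it splits there, and Frobenius reciprocity lifts the splitting back to $\mathrm{Rep}^0\mcA_{L^{\mu}}$. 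Your ``delicate point'' should be replaced by this Ext-vanishing step.
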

Note that the results for commutative algebra objects are given by setting $\mu=0$. In subsections \ref{subtriplet} and \ref{subbp}, we construct algebra objects $\mcA_{rQ}$ and $\mfB^{a_r}_{rP}$ whose categories of local modules are expected to compare nicely to particular module categories of $W_Q(r)$ and $B_Q(r)$ respectively. The structure of their categories of local modules are as follows (see Propositions \ref{tripletao} and \ref{bpao}):

\begin{proposition}${}$
\begin{itemize}
\item $\mathrm{Rep}^0\mcA_{rQ}$ is a finite non-degenerate ribbon category (i.e. Log-Modular) with $\mathrm{det}(A)\cdot r^{\mathrm{rank}(\mfg)}$ distinct irreducible modules, where $A$ is the Cartan matrix of $\mfg$.
\item $\mathrm{Rep}^0\mfB_{rP}^{a_r}$ is non-degenerate, ribbon if r is odd or $\rho \in Q$, and the irreducible modules are
\[ \{ \mcF(\irred{\mu} \boxtimes \mathsf{F}_{\gamma}) \; | \,  \mu,\gamma \in \mfh^*, \, \mathrm{and} \, \mu+ra_r\gamma \in Q \; \} \] 
with relations $\mcF( \irred{\mu}\boxtimes \mathsf{F}_{\gamma}) \cong \mcF(\irred{\mu+\lambda} \boxtimes \mathsf{F}_{\gamma+a_r\lambda})$ for all $\lambda \in rP$, where $a_r=\sqrt{-1/r}$.
\end{itemize}
\end{proposition}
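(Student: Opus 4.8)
The plan is to treat the two bullets as specializations of the general machinery assembled above: the induction theorem (Theorem~\ref{induction}), the simplicity criterion for inductions (Proposition~\ref{simpleinduction}), and the ribbon/Müger-center criteria (Proposition~\ref{rib2}). In each case the genuine work is to identify the ambient braided category, the lattice $L$ (and, in the second case, the odd generator $\mu$) together with the explicit quadratic form $\langle-,-\rangle$, and then to reduce every assertion to a lattice-index or lattice-duality computation. Throughout I take the even-order case, so that $\ell=2r$ and $\tfrac{\ell}{2}=r$, this being the regime attached to $W_Q(r)$ and $B_Q(r)$.

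For $\mcA_{rQ}$ I first record that the form is $r^2$ times the (even, normalized) root-lattice form, so that $\langle r\beta,r\beta'\rangle=r^2(\beta,\beta')$; since $(\beta,\beta)\in 2\mbbZ$ on the root lattice the commutativity criterion is met and $\mcA_{rQ}$ is a commutative algebra. Finiteness is immediate from Theorem~\ref{induction}: as $\mu=0$ we have $L^\mu=rQ$, of rank $\mathrm{rank}(\mfg)=\mathrm{rank}(P)$. To count irreducibles I compute $\Lambda(rQ)=\tfrac{\ell}{2}(rQ)^*/\bigl(\tfrac{\ell}{2}(rQ)^*\cap rQ\bigr)$; since $(rQ)^*=\tfrac1r Q^*$ and $\tfrac{\ell}{2}=r$ this is $Q^*/rQ$ (noting $rQ\subset Q\subset Q^*$), and with the normalization of $\langle-,-\rangle$ fixed in Subsection~\ref{subtriplet} the index $[Q^*:rQ]=[Q^*:Q]\,[Q:rQ]$ evaluates to $\det(A)\cdot r^{\mathrm{rank}(\mfg)}$. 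Ribbonness follows from the first bullet of Proposition~\ref{rib2}: with $\mu=0$ only the condition $2(1-r)\langle\lambda,\rho\rangle\in\ell\mbbZ$ for $\lambda\in rQ$ is needed, which is a direct divisibility check since $\langle r\beta,\rho\rangle=r(\beta,\rho)\in r\mbbZ$ gives $2(1-r)r(\beta,\rho)\in 2r\mbbZ=\ell\mbbZ$. Finally, non-degeneracy follows from the second bullet of Proposition~\ref{rib2}: under its hypothesis $r\nmid 2d_i$ it suffices to show the form induced on $\Lambda(rQ)\cong Q^*/rQ$ has trivial radical, i.e.\ that $\lambda\in Q^*$ with $(\lambda,\gamma)\in r\mbbZ$ for all $\gamma\in Q^*$ forces $\lambda\in rQ$; this is immediate from the non-degeneracy of the underlying form. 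Together with finiteness this yields Log-Modularity.

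For $\mfB_{rP}^{a_r}$ the ambient category is the Deligne product of $\mcC$ with a rank-$\mathrm{rank}(\mfg)$ lattice (free-boson) category whose simple currents are the Fock modules $\mathsf F_\gamma$; the algebra is built from $L=\{(\lambda,a_r\lambda):\lambda\in rP\}$ (with the odd generator $\mu$ providing the superalgebra structure), and $a_r=\sqrt{-1/r}$ is chosen precisely so that the $\mcC$-contribution and the Fock contribution to the self-braiding of each generator cancel, so that the supercommutativity classification (Proposition~\ref{superalg}, Corollary~\ref{superalgclass}) applies. By Proposition~\ref{simpleinduction} every simple of $\mathrm{Rep}^0\mfB_{rP}^{a_r}$ is $\mcF(\irred{\mu}\boxtimes\mathsf F_\gamma)$ for a simple $\irred{\mu}\boxtimes\mathsf F_\gamma$ of the ambient category, and the constraint singling out the \emph{local} ones is the condition of Theorem~\ref{induction} that the combined weight lie in $\tfrac{\ell}{2}L^*$; expanding $\langle(\mu,\gamma),(\lambda,a_r\lambda)\rangle$ over generators $\lambda\in rP$ and using $\ell=2r$ collapses this to $(\mu+ra_r\gamma,\kappa)\in\mbbZ$ for all $\kappa\in P$, i.e.\ $\mu+ra_r\gamma\in P^*=Q$. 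The stated identifications $\mcF(\irred{\mu}\boxtimes\mathsf F_\gamma)\cong\mcF(\irred{\mu+\lambda}\boxtimes\mathsf F_{\gamma+a_r\lambda})$ for $\lambda\in rP$ are then exactly the statement that the two sources differ by tensoring with the summand $\mbbC_{(\lambda,a_r\lambda)}\subset\mfB_{rP}^{a_r}$, which becomes trivial after applying $\mcF$. Ribbonness is again read off the first bullet of Proposition~\ref{rib2}, and the dichotomy ``$r$ odd or $\rho\in Q$'' is precisely the content of $2(1-r)\langle\mu,\rho\rangle\in\tfrac{\ell}{2}\mbbZ$ on the odd generator, the factor $1-r$ being even when $r$ is odd and the half-integrality otherwise being absorbed by $\rho\in Q$. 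Non-degeneracy follows from the second bullet of Proposition~\ref{rib2} as before.

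I expect the main obstacle to be the bookkeeping of the quadratic form in the second bullet: pinning down the normalization of the free-boson contribution so that the cancellation making $\mfB_{rP}^{a_r}$ supercommutative and the monodromy computation yielding exactly $\mu+ra_r\gamma\in Q$ are simultaneously consistent, and verifying that the induced form on the (now possibly infinite) set of local simples has trivial radical. A secondary but real difficulty is checking the hypothesis $r\nmid 2d_i$ required to invoke the Müger-center criterion, and confirming that in the non-simply-laced cases the index $[Q^*:Q]$ reproduces $\det(A)$ under the normalization fixed in Subsection~\ref{subtriplet}.
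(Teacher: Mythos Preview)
Your treatment of the first bullet is essentially the paper's own argument, with one tidy improvement: for non-degeneracy you observe directly that $\langle\lambda,\gamma\rangle\in r\mbbZ$ for all $\gamma\in P$ is exactly the statement $\lambda\in rP^*=rQ$, whereas the paper argues more explicitly in coordinates. Your index computation $[P:rQ]=[P:Q]\cdot[Q:rQ]=\det(A)\cdot r^{\mathrm{rank}(\mfg)}$ and the ribbon check coincide with the paper's.

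Your treatment of the second bullet contains a genuine misidentification: $\mfB_{rP}^{a_r}$ is \emph{not} a superalgebra and there is no odd generator. The choice $a_r=\sqrt{-1/r}$ makes $1+ra_r^2=0$, so the effective quadratic form on the diagonal lattice $\{(\lambda,a_r\lambda):\lambda\in rP\}$ vanishes identically and the braiding among summands is trivial. Thus $\mfB_{rP}^{a_r}$ is a \emph{commutative} algebra object (this is exactly Corollary~\ref{Bpalg}), and Proposition~\ref{superalg} and Corollary~\ref{superalgclass} are not in play.

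This error propagates. Your derivation of the dichotomy ``$r$ odd or $\rho\in Q$'' from an odd-generator condition $2(1-r)\langle\mu,\rho\rangle\in\tfrac{\ell}{2}\mbbZ$ is spurious. The paper instead computes the twist on $\mbbC_\lambda\boxtimes\mathsf F_{a_r\lambda}$ directly: the Heisenberg contribution $e^{\pi i \langle a_r\lambda,a_r\lambda\rangle}=q^{-\langle\lambda,\lambda\rangle}$ cancels the self-pairing from $\mcC$, leaving $q^{2(1-r)\langle\lambda,\rho\rangle}$, and requiring this to be $1$ for all $\lambda\in rP$ yields $(1-r)\langle\omega,\rho\rangle\in\mbbZ$ for every $\omega\in P$; with $\rho\in\tfrac12 Q$ this holds when $1-r$ is even or when $\rho\in Q$. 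Similarly, Proposition~\ref{rib2} is stated for $\mcC$, not for $\mcC\boxtimes\mcH$, so your one-line appeal to its second bullet for non-degeneracy is not justified as written. The paper computes the monodromy $q^{2\langle\mu,\mu'\rangle+2r\langle\gamma,\gamma'\rangle}$ on pairs of local simples, shows a transparent object forces $\mu\in rP$ and $\gamma\in ra_rP$, and then reduces to $\langle\tilde\mu-\tilde\gamma,\mu'\rangle\in\mbbZ$ for all admissible $\mu'$, which is impossible since $\mu'$ need not lie in $Q$; only then does it invoke the $\mathrm{Ext}^1$ argument of Proposition~\ref{rib} to pass from simple transparent objects to all transparent objects. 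This is precisely the bookkeeping you flagged as the main obstacle, and it cannot be bypassed. Your closing worry about the hypothesis $r\nmid 2d_i$ is legitimate (in ADE type it excludes $r=2$), but this constraint is shared with the paper's argument.
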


\subsection{Connections to vertex operator algebras}

It is generally expected that sufficiently nice categories of vertex algebra modules form a rigid braided tensor category. This, however, is only known to be true for the simplest class of vertex algebras, those which are strongly rational. In this case one even has the structure of a modular tensor category, e.g. a finite, semi-simple, non-degenerate ribbon tensor category \cite{H1, H2}. For module categories of vertex operator algebras which are not semi-simple, already the existence of a braided tensor category structure is difficult to prove, let alone rigidity. Existence for $C_2$-cofinite vertex algebras is due to Huang as well \cite{H3}, these are conjecturally log-modular, i.e. finite and non-degenerate \cite{CGan}.
The main further known examples are rigid vertex tensor category structures on the category of ordinary modules of an affine Lie algebra at generic level \cite{KL1, KL2, KL3, KL4}, admissible level and few other non-generic levels (rigidity only for $\mathfrak g$ simply-laced) \cite{CHY, C3, CY} and the Virasoro algebra \cite{CJHRY}. Given the exciting properties of strongly rational vertex algebras, such as modularity of representation categories and Verlinde's formula \cite{H1}, one hopes to also find a lot of interesting structure in the non semi-simple cases \cite{CGan}. However, we are lacking the accessible examples required to explore general properties of non-rational vertex operator algebras.

The prototypical examples of strongly rational vertex algebras are affine vertex algebras at positive integer level \cite{FZ} and principal $\mathcal W$-algebras at non-degenerate admissible levels \cite{Ara}. The non-rational vertex algebras one wants to understand are affine vertex (super)algebras and general $\mathcal W$-(super)algebras at admissible level and beyond. However, only the case of $\mathfrak{sl}_2$ is fairly well studied and even for vertex operator algebras associated to Lie algebras of rank $2$, we are lacking instructive examples. Examples of non-rational vertex operator algebras associated to $\mathfrak{sl}_2$ are the singlet, triplet and $B_p$-algebras of \cite{Kau, A, AM, TW, CRW} and their representation categories are conjecturally closely related to those of restricted and unrolled quantum $\mathfrak{sl}_2$ at even root of unity. 

The higher rank analogues of singlet, triplet and $B_p$-algebra have been introduced in \cite{CM, FT, C1} and are denoted by $W^0_Q(r)$, $W_Q(r)$, and $B_Q(r)$ respectively where $Q$ is the root lattice of a simple finite dimensional complex Lie algebra $\mfg$ of ADE type and $r \in \mbbZ_{\geq 2}$. Studies are notorously difficult so most satements are given conjecturally and only very recently Shoma Sugimoto proved some of the Feigin-Tipunin conjectures \cite{Su}; and some more results for $W_{A_2}^0(2)$ just appeared \cite{AMW}. The category of modules $\mathrm{Rep}_{\langle s \rangle}W^0_Q(r)$ generated by irreducible $W^0_Q(r)$ modules is expected to be ribbon equivalent to the category of weight modules $\mathrm{Rep}\,\overline{U}_q^H(\mfg)^{wt}$ for the unrolled restricted quantum group $\overline{U}_q^H(\mfg)$ at $2r$-th root of unity $q$, motivated largely by already established connections in the $\mfg=\mathfrak{sl}_2$ case. The fusion rules for $W^0_{A_1}(r)$ are known only for $r=2$, but there is a conjecture for $r>2$ \cite{CM2}. It has been shown (see \cite{CMR}) that if the fusion rules are as conjectured, then there exists an identification $\phi:\mathrm{Irr}(\mathrm{Rep}\overline{U}_q^H(\mathfrak{sl}_2)^{wt}) \to \mathrm{Irr}(\mathrm{Rep}_{\langle s \rangle} W^0_{A_1}(r))$ between sets of irreducible modules in each category which induces an isomorphism of Grothendieck rings, and can be extended to indecomposables in a way which preserves Loewy diagrams \cite{CGR}. We also know \cite[Theorem 1]{CMR} that the regularized asymptotic dimensions of irreducible $W^0_{A_1}(p)$-modules coincide exactly with normalized modified traces of open Hopf links for the corresponding (under $\phi$) $\overline{U}_q^H(\mathfrak{sl}_2)$-module. The identification $\phi$ can be used to construct braided tensor categories which compare nicely to certain categories of modules of the triplet and $B_p$ vertex operator algebras, as in \cite{CGR} and \cite{ACKR} respectively.

Extending the connections between $\overline{U}_q^H(\mathfrak{sl}_2)$ and the rank one singlet, triplet, and $B_p$ vertex operator algebras to higher rank is one of our primary motivations. The $B_{A_2}(2)$-algebra is particularly interesting, as it is the simple affine vertex algebra of $\mathfrak{sl}_3$ at level $-3/2$, $L_{-\frac{3}{2}}(\mathfrak{sl}_3)$ \cite{A2}. In forthcoming work, we will study the representation theory of the  corresponding algebra object of the unrolled quantum group of $\mathfrak{sl}_3$ in detail and compare them to known structure of $L_{-\frac{3}{2}}(\mathfrak{sl}_3)$. As described in Subsections \ref{subtriplet} and \ref{subbp}, the vertex operator algebras $W_Q(r)$ and $B_Q(r)$ can be identified with commutative algebra objects in $(\mathrm{Rep}\overline{U}_q^H(\mathfrak{sl}_2)^{wt})^{\oplus}$ and $(\mathrm{Rep}\overline{U}_q^H(\mathfrak{sl}_2)^{wt} \boxtimes \mcH)^{\oplus}$ respectively where $\mcH$ is the category of modules over the Heisenberg vertex operator algebra on which the horizontal subalgebra acts semisimply (see Subsection \ref{Heis}), and $\mcC^{\oplus}$ denotes the direct sum completion of $\mcC$ (see \cite{AR}). We expect that the module categories $\mathrm{Rep}_{\langle s \rangle} W_Q(r)$ and $\mathrm{Rep}_{\langle s \rangle}B_Q(r)$ generated by irreducible $W_Q(r)$ and $B_Q(r)$ modules respectively to be ribbon equivalent to the categories of local modules over their corresponding algebra objects, which is motivated by the $\mfg=\mathfrak{sl}_2$ case \cite{ACKR,CGR}. The algebra objects corresponding to $W_Q(r)$ and $B_Q(r)$ are denoted $\mcA_{rQ}\in \mcC^{\oplus}$ and $\mfB_{rP}^{a_r}\in (\mcC \boxtimes \mcH)^{\oplus}$ respectively, and the structure of their categories of local modules are given in Propoisitions \ref{tripletao} and \ref{bpao}. Using this identification, we make the following conjectures for the corresponding categories of vertex operator algebra modules:

\begin{conjecture}${}$
\begin{enumerate} 
\item $\mathrm{Rep}_{\langle s \rangle}\mcW_Q(r)$ is a finite non-degenerate ribbon category with $\mathrm{det}(A)\cdot r^{\mathrm{rank}(\mfg)}$ distinct irreducible modules, where $A$ is the Cartan matrix of $\mfg$.
\item $\mathrm{Rep}_{\langle s \rangle} B_Q(r)$ is non-degenerate, ribbon if $r$ is odd or $\rho \in Q$, and the irreducible modules can be indexed as
\[ \{ \irred{\mu}_{\gamma} \; | \,  \mu,\gamma \in \mfh^*, \, \mathrm{and} \, \langle \lambda, \mu+ra_r\gamma \rangle \in \mbbZ \; \text{for all $\lambda \in P$}\}\] 
with relations $\irred{\mu}_{\gamma} \cong \irred{\mu+\lambda}_{\gamma+a_r\lambda}$ for all $\lambda \in rP$, where $a_r=\sqrt{-1/r}$.
\end{enumerate}
\end{conjecture}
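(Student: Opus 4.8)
The plan is to deduce both statements from the already-established structure of the categories of local modules on the quantum-group side (Propositions \ref{tripletao} and \ref{bpao}) by transporting them along the conjectured ribbon equivalence between the singlet-type vertex algebra category and the category of weight modules $\mcC=\mathrm{Rep}\,\overline{U}_q^H(\mfg)^{wt}$. In this sense the essential content of the conjecture is supplied by the vertex-algebra input: once the equivalence and the identification of $W_Q(r)$, $B_Q(r)$ with the algebra objects $\mcA_{rQ}$, $\mfB_{rP}^{a_r}$ are granted, the assertions about finiteness, non-degeneracy, the ribbon property, and the indexing of irreducibles are the images of the corresponding quantum-group statements.

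Concretely I would proceed in four steps. First, invoke (conjecturally) a ribbon equivalence $\Phi:\mathrm{Rep}_{\langle s \rangle}W^0_Q(r)\xrightarrow{\sim}\mcC$, extended to the direct-sum completions and, for the $B_Q(r)$ case, tensored with $\mathrm{id}_{\mcH}$ to give an equivalence onto the Deligne product $\mcC\boxtimes\mcH$ with the Heisenberg category $\mcH$ (Subsection \ref{Heis}). Second, identify the extensions as commutative algebra objects: under $\Phi$ one has $W_Q(r)\cong\mcA_{rQ}$ in $\mcC^{\oplus}$ and $B_Q(r)\cong\mfB_{rP}^{a_r}$ in $(\mcC\boxtimes\mcH)^{\oplus}$, matching the lattice data $rQ$ and $rP$ against the conformal weights and monodromy charges of the simple currents generating each extension. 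Third, apply the general correspondence between vertex algebra extensions and commutative algebra objects in a braided tensor category (Huang--Kirillov--Lepowsky; Creutzig--Kanade--McRae): the category of modules for the extension that lie in the original category is ribbon equivalent to the category of local modules over the corresponding algebra object, so
\[ \mathrm{Rep}_{\langle s \rangle}W_Q(r)\simeq\mathrm{Rep}^0\mcA_{rQ}, \qquad \mathrm{Rep}_{\langle s \rangle}B_Q(r)\simeq\mathrm{Rep}^0\mfB_{rP}^{a_r}. \]
Fourth, read off the claims from Propositions \ref{tripletao} and \ref{bpao}: finiteness and the count $\mathrm{det}(A)\cdot r^{\mathrm{rank}(\mfg)}$ of simple objects for $W_Q(r)$; non-degeneracy, the ribbon criterion ($r$ odd or $\rho\in Q$), and the bijection $\irred{\mu}_{\gamma}\leftrightarrow\mcF(\irred{\mu}\boxtimes\mathsf{F}_{\gamma})$ with identifications under $rP$ for $B_Q(r)$. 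Here the VOA-side constraint $\langle\lambda,\mu+ra_r\gamma\rangle\in\mbbZ$ for all $\lambda\in P$ is exactly the image of the quantum-group locality condition $\mu+ra_r\gamma\in Q$, since for ADE type $P$ and $Q$ are dual lattices, i.e. $Q=P^{*}$.

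The hard part is the input I am assuming, not the transport. Establishing a braided tensor category structure on $\mathrm{Rep}_{\langle s \rangle}W^0_Q(r)$ is itself nontrivial, as these categories are neither semisimple nor $C_2$-cofinite, so Huang's theory does not directly apply; and in higher rank even the fusion rules are conjectural, being known only for $W^0_{A_1}(2)$ with a conjecture for $r>2$ \cite{CM2}. Proving the ribbon equivalence $\Phi$ — the higher-rank analogue of the $\mathfrak{sl}_2$ correspondence \cite{ACKR,CGR} underlying the recent progress of Sugimoto \cite{Su} — is the principal obstacle and is genuinely open. A secondary but real difficulty is verifying that $W_Q(r)$ and $B_Q(r)$ correspond to $\mcA_{rQ}$ and $\mfB_{rP}^{a_r}$ themselves, and not merely to abstractly isomorphic algebra objects: this requires matching the full vertex-algebraic extension data (conformal weights, operator product coefficients, and parity where relevant) with the braiding data $q^{\langle\lambda,\mu\rangle}$ of the simple currents. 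Once these identifications are in place, the remaining steps are formal consequences of the results already proved in the excerpt.
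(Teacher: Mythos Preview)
The statement you are addressing is a \emph{Conjecture}, not a theorem, and the paper does not offer a proof of it. What the paper does is exactly what you describe: it proves Propositions \ref{tripletao} and \ref{bpao} on the quantum-group side, records the conjectural ribbon equivalence between $\mathrm{Rep}_{\langle s \rangle}W^0_Q(r)$ and $\mcC$ (and its Heisenberg-tensored analogue), and then \emph{states} the conjecture as the image of those propositions under this expected equivalence. Your four-step outline (equivalence $\Phi$; identification of $W_Q(r)$, $B_Q(r)$ with $\mcA_{rQ}$, $\mfB_{rP}^{a_r}$; the HKL/CKM extension theory; transport of the finiteness, non-degeneracy, ribbon and irreducible-count statements) is precisely the paper's motivation, and your explicit acknowledgment that the ribbon equivalence $\Phi$ is the genuinely open input matches the paper's stance. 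In short, your proposal is not a proof of the conjecture but a correct account of why the conjecture is made and what would be needed to settle it; this coincides with the paper's own treatment.
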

We remark, that the number of irreducibles in the conjecture for $W_Q(r)$ agrees with observations of Shoma Sugimoto that he presented in a seminar talk.

\subsection{Connections to physics}

Vertex algebras are intimately connected to physics as the chiral algebra of a two-dimensional conformal quantum field theory is a vertex operator algebra. If correlation functions in the field theory exhibit logarithmic singularities then the theory is called a logarithmic conformal field theory, see \cite{CR} for an introduction. This behaviour is tied to the nilpotent action of the Virasoro zero-mode and thus leads to vertex algebras with non semi-simple representation categories. The very first well-studied example were the symplectic fermions whose even subalgebra is the triplet algebra at $r=2$ \cite{Kau2}. 

The modern relevance of vertex algebras and their representation categories in physics is as meaningful invariants of three and four-dimensional supersymmetric field theories \cite{Beem}. This is also mathematically far reaching as it connects to invariants of 3- and 4-manifolds \cite{FeGu} and the quantum geometric Langlands program \cite{CGa, FrGa}.  In the latter case vertex algebras are supposed to be over a localization of $\mathbb C[\psi]$, where $\psi$ is the coupling of the gauge theory. The $W_Q(r)$-algebras appear in certain configurations as large coupling limits and so does the small $N=4$ superconformal algebra at central charge $-9$ \cite{CGL}. But the latter can be realized as a simple current extension of a rank one Heisenberg vertex algebra times $W^0_{A_2}(2)$ \cite{A2, CGL} and its quantum group analogue is thus also covered by our techniques. 
The $B_Q(r)$-algebras and their generalizations are conjecturally chiral algebras of Argyres-Douglas theories \cite{C1}. Physics knows central charge \cite{XZ}, affine subalgebra and character \cite{BN} of the chiral algebra and the conjecture is based on the $B_Q(r)$-algebras exactly having these properties. 
It has been recently proven for rank one that these properties uniquely determine the vertex algebra and hence in this case the conjecture is true \cite{ACGY}. Argyres-Douglas theories \cite{AD} are four-dimensional supersymmetric gauge theories associated to pairs of Dynkin diagrams (usually of ADE-type) and they serve as a rich source of examples. Much of the physics data has nice representation categorical interpretation, e.g. line and surface defects correspond to modules \cite{CGS1, CGS2},  modular data gives Wild Hitchin characters \cite{FPYY} and in general modular data and fusion rules are visible as algebras of line defects \cite{NY}. Note that in general these categories of line defects even go beyond the category of weight modules that we consider in this work 
\cite{CCG}. In \cite{CCG} also Ext-algebras of the unrolled and uprolled quantum groups for $\mathfrak{sl}_2$ were computed and results suggest an SU(2)-action in the uprolled case, i.e. on Ext$_{ \mathrm{Rep}^0(\mcA_{rA_1})}^\bullet(\mcA_{rA_1}, \mcA_{rA_1})$.  This hints to an exciting connection to geometry. Firstly, note that the Feigin-Tipunin definition of $W_Q(r)$ is as global sections of a sheaf of vertex algebras on the flag manifold of the Lie group $G$ whose Lie algebra $\mathfrak g$ has $Q$ as root lattice. Secondly and at least if  $G$ is simply-laced then $G$ acts on  $W_Q(r)$ by automorphisms \cite{Su}. On the other hand for odd roots of unity a correspondence between Ext-algebras of quantum groups and 
of coherent sheaves on the cotangent bundle of the flag manifold of $G$ is known \cite{ABG}. Moreover the Ext-algebras carry an action of $G$. 
A future aim is thus to study Ext$_{ \mathrm{Rep}^0(\mcA_{rQ})}^\bullet(\mcA_{rQ}, \mcA_{rQ})$ and ideally to connect the results to some geometry. 

\noindent {\bf Acknowledgements} We thank Shoma Sugimoto and Boris Feigin for discussions related to this work. 
T. C. is supported by NSERC RES0048511.

\section{Preliminaries}

The primary objects of study in this article are commutative algebra objects and supercommutative superalgebra objects constructed as direct sums of simple currents in the category of weight modules for unrolled restricted quantum groups. We review in this section the relevant material for their construction. A good reference is \cite{KO} and for superalgebra results see \cite{CKM}.

\subsection{Algebra Objects and Simple Currents}\label{subcurrents}

\begin{definition}
A simple current is a simple object which is invertible with respect to the tensor product. Objects which are their own inverse are called self-dual.
\end{definition}

\begin{definition}\label{alg} A commutative associative unital algebra in a braided tensor category $\C$ is an object $A \in \C$ with $\C$-morphisms $\mu \in \mathrm{Hom}(A \otimes A, A)$, $\iota \in \mathrm{Hom}(\mathds{1},A)$ satisfying the following conditions:
\begin{itemize}
\item Associativity: $\mu \circ (\mu \otimes \Id_A)=\mu \circ ( \Id_A \otimes \mu) \circ a_{A,A,A}$ where $a_{A,A,A}: (A \otimes A) \otimes A \to A \otimes (A \otimes A)$ is the associativity isomorphism.
\item Unit: $\mu \circ (\iota \otimes \Id_A) \circ l_A^{-1}=\Id_A$ where $l_A:\mathds{1} \otimes A \rightarrow A$ is the left unit isomorphism.
\item Commutativity: $\mu\circ c_{A,A} = \mu$ where $c_{A,A}$ is the braiding.
\item (Optional assumption) Haploid: $\dim(\Hom_{\mathcal{C}}(\mathds{1},A))=1$.\\
\end{itemize}
\end{definition}
Supercommutative superalgebra objects are defined as follows \cite{CKL}:
\begin{definition}\label{salg}
$A \in \C$ is a superalgebra if it is an algebra with a $\mbbZ_2$-grading compatible with the product. That is, $A=A^{\bar{0}} \oplus A^{ \bar{1}}$ such that 
\[ \mu(A^{\bar{i}} \otimes A^{\bar{j}}) \subset A^{\overline{i+j}}.\]
$A$ is supercommutative if for $x\in A^{\bar{i}},y \in A^{\bar{j}}$, 
\begin{equation}\label{supercom} \mu|_{A^{\bar{i}} \otimes A^{\bar{j}}}= (-1)^{ij} \cdot \mu \circ c_{A^{\bar{i}},A^{\bar{j}}}.
\end{equation}
\end{definition}

We denote by $\mathrm{Rep}\,A$ the category of objects $(V, \mu_V)$ where $V \in \C$ is an object in $\C$ and $\mu_V \in \mathrm{Hom}(A \otimes V, V)$ is a $\C$-morphism satisfying the usual assumptions required to make $V$ an $A$-module:

\begin{itemize}
\item $\mu_V \circ (\Id_A \otimes \mu_V)=\mu_V \circ ( \mu \otimes \Id_A)\circ a_{A,A,V}^{-1}$,
\item $\mu_V \circ (\iota \otimes \Id_V) \circ l_V^{-1}=\Id_V$.
\end{itemize}

Although $\C$ is braided, $\mathrm{Rep}\,A$ need not be, but there is a full subcategory of $\mathrm{Rep}\,A$ which is braided \cite{KO}:

\begin{definition}\label{local}
The category of local modules $\mathrm{Rep}^0A$ is the full subcategory of $\mathrm{Rep}\,A$ whose objects are given by 
\[ \left\{ (V,\mu_V) \in \mathrm{Rep}\,A \, | \, \mu_V \circ c_{V,A} \circ c_{A,V} = \mu_V \right\} \]
\end{definition}

The most well-behaved modules in $\mathrm{Rep}\,A$ are those which can be obtained via the induction functor:
\begin{definition}\label{indfunctor}
The induction functor $\mathcal{F}:\C \to \mathrm{Rep}\,A$ is defined by $\mathcal{F}(V)=(A \otimes V, \mu_{\mathcal{F}(V)})$, with $\mu_{\mathcal{F}(V)}=( \mu \otimes \mathrm{Id}_V) \circ a_{A,A,V}^{-1}$ and $\mathcal{F}(f)=Id_A \otimes f$, where $\mu:A \otimes A \to A$ is the product on $A$, and $a_{-,-,-}$ the associativity morphism in $\C$.
\end{definition}

We also have the forgetful restriction functor $\mathcal{G}:\mathrm{Rep}\,A \to \C$ given by $(V,\mu_V) \mapsto V$. The induction and restriction functors satisfy Frobenius reciprocity:
\begin{equation}\label{reciprocity}
\mathrm{Hom}_{\C}(X,\mathcal{G}(Y)) \cong \mathrm{Hom}_{\mathrm{Rep}\,A}(\mcF(X),Y) 
\end{equation}
for all $X \in \C$, $Y \in \mathrm{Rep}\,A$. \\

If $\mcC$ has braiding $c_{-,-}$, then an object $Y \in \mcC$ is said to be transparent if $c_{Y,X} \circ c_{X,Y} = \mathrm{Id}_{X \otimes Y}$ for all $X \in \mcC$. 
\begin{definition}\label{muger}
The M\"{u}ger center of $\C$ is the full subcategory of $\C$ consisting of all transparent objects.
\end{definition}
For finite braided tensor categories, triviality of the M\"{u}ger center is equivalent to the usual notions of non-degeneracy (see \cite[Theorem 1.1]{S}).

\subsection{Unrolled Quantum Groups}\label{subunrolled}

Let $\mfg$ be a simple finite-dimensional complex Lie algebra of rank $n$ and dimension $n+2N$ with Cartan matrix $A=(a_{ij})_{i,j=1}^n$ and Cartan subalgebra $\mfh$. Let $\Delta:=\{\alpha_1,...,\alpha_n\} \subset \mfh^*$ be the set of simple roots of $\mfg$, $\Delta^+$ ($\Delta^-$) the set of positive (negative) roots, and $Q:=\bigoplus\limits_{i=1}^n \mbbZ \alpha_i$ the integer root lattice. Let $\{H_1,...,H_n\}$ be the basis of $\mfh$ such that $\alpha_j(H_i)=a_{ij}$ and $\langle , \rangle$ the form defined by $\langle \alpha_i,\alpha_j \rangle=d_ia_{ij}$ where $d_i=\langle \alpha_i,\alpha_i\rangle/2$ and normalized such that short roots have length 2. Let $P:= \bigoplus\limits_{i=1}^n \mbbZ \omega_i$ be the weight lattice generated by the dual basis $\{\omega_1,...,\omega_n\} \subset \mfh^*$ of $\{d_1H_1,...,d_nH_n\} \subset \mfh$, and $\rho:= \frac{1}{2} \sum\limits_{\alpha \in \Delta^+} \alpha \in P$ the Weyl vector.\\

Now, let $\ell \geq 3$ such that $r=2\ell/(3+(-1)^{\ell}) > \mathrm{max}\{g_1,...,g_n\}$ where $g_k=\mathrm{gcd}(d_k,r)$. Let $q \in \mathbb{C}$ be a primitive $\ell$-th root of unity, $q_i=q^{d_i}$, and fix the notation
\begin{equation}\{x\}=q^x-q^{-x}, \quad  [x]=\frac{q^x-q^{-x}}{q-q^{-1}}, \quad [n]!=[n][n-1]...[1], \quad \binom{n}{m}=\frac{\{n\}!}{\{m\}!\{n-m\}!} ,
\end{equation}
\begin{equation}
 [j;q]=\frac{1-q^j}{1-q}, \qquad [j;q]!=[j;q][j-1;q] \cdots [1;q] \label{jq}.
\end{equation}
We will often use a subscript $i$, e.g. $[x]_i$, to denote the substitution $q \mapsto q_i$ in the above formulas. Fix a lattice $\mathsf{L}$ such that $Q \subset \mathsf{L} \subset P$ and let $r_i:=r/\mathrm{gcd}(d_i,r)$.

\begin{definition}\label{Uqh}
The unrolled restricted quantum group associated to $\mfg$ at root of unity $q$ is the $\mbbC$-algebra with generators $X_{\pm i}, H_i, K_{\gamma}$ with $i=1,...,n$, $\gamma \in \mathsf{L}$, and relations
\begin{align}
\label{eqKX}K_0=1, \qquad K_{\gamma_1}K_{\gamma_2}=K_{\gamma_1+\gamma_2},&\qquad K_{\gamma}X_{\pm j}K_{-\gamma}=q^{\pm\langle \gamma,\alpha_j \rangle}X_{\sigma j},\\
[H_i,X_{\pm j}]=\pm a_{ij} X_{\pm j }, &\quad [H_i,K_{\gamma}]=0, \quad [H_i,H_j]=0\\
\label{eqX} X_{\pm i}^{r_i}=0  \qquad [X_i,X_{-j}]&=\delta_{i,j}\frac{K_{\alpha_j}-K_{\alpha_j}^{-1}}{q_j-q_j^{-1}},\\
\label{serre} \sum\limits_{k=0}^{1-a_{ij}} (-1)^k \binom{1-a_{ij}}{k}_{q_i} X^k_{\pm i}&X_{\pm j} X_{\pm i}^{1-a_{ij}-k}=0 \qquad \text{if $ i \not = j$}.
\end{align}
There is a Hopf-algebra structure on $\overline{U}_{q}^H(\mfg)$ with coproduct $\Delta$, counit $\epsilon$, and antipode $S$ defined by
\begin{align}
\label{coK}\Delta(K_{\gamma})&=K_{\gamma} \otimes K_{\gamma}, & \epsilon(K_{\gamma})&=1,&  S(K_{\gamma})&=K_{-\gamma}\\
\label{coXi}\Delta(X_i)&=1 \otimes X_i + X_i \otimes K_{\alpha_i} & \epsilon(X_i)&=0, & S(X_i)&=-X_iK_{-\alpha_i},\\
\label{coX-i}\Delta(X_{-i})&=K_{-\alpha_i} \otimes X_{-i}+X_{-i} \otimes 1, & \epsilon(X_{-i})&=0, & S(X_{-i})&=-K_{\alpha_i}X_{-i}.\\
\label{coH} \Delta(H_i)&=1\otimes H_i + H_i \otimes 1,& \quad \epsilon(H_i)&=0, &\quad S(H_i)&=-H_i.
\end{align}

\begin{definition}\label{weightmod}
We call a $\overline{U}_q^H(\mfg)$-module $V$ a weight module if $K_{\gamma}=\prod\limits_{i=1}^n q^{d_ic_iH_i}$ for $\gamma=\sum\limits_{i=1}^n c_i\alpha_i \in \mathsf{L}$ as operators on $V$ and $V$ decomposes as a direct sum of eigenspaces
\[V=\bigoplus\limits_{\lambda \in \mfh^*} V(\lambda)\]
where $V(\lambda)=\{ v \in V \, | \, H_iv=\lambda(H_i)v \}$. We denote by $\mcC$ the category of finite dimensional weight modules.
\end{definition}

Define the operators $\mathscr{H}$ and $\mathscr{R}$ on $\C$ by the action of 
\begin{equation}\label{braidingH} \mathscr{H}:=q^{\sum\limits_{i,j=1}^n d_i(A^{-1})_{ij}H_i \otimes H_j}\end{equation}
and 
\begin{equation}\label{braidingR} \mathscr{R}:= \prod\limits_{i=1}^N\left( \sum \limits_{j=1}^{r_{\beta_i}-1} \frac{\left( (q_{\beta_i} -q_{\beta_i}^{-1})X_{\beta_i} \otimes X_{-\beta_i}\right)^j}{[j;q_{\beta_i}^{-2}]!} \right) 
\end{equation}
where we recall that $A_{ij}$ is the Cartan matrix of $\mfg$ and $r_{\beta_i}=r/\mathrm{gcd}(d_{\beta_i},r)$. One can easily modify the argument in \cite[Subsection 5.8]{GP1}, as described in \cite[Subsection 4.1]{R}, to show that $\C$ is braided with braiding given by the action of the operator $\tau \circ \mathcal{R}$ where $\mathcal{R}=\mathscr{H}\mathscr{R}$ and $\tau(x \otimes y)=y \otimes x$ is the usual flip map. Given vectors $v_{\lambda},v_{\mu} \in V$ such that $H_iv_{\lambda}=\lambda(H_i)v_{\lambda}, H_iv_{\mu}=\mu(H_i)v_{\mu}$ (i.e. weight vectors of weight $\lambda,\mu \in \mfh^*$), $\mathscr{H}$ acts as
\begin{equation} \label{weight}
\mathscr{H}(v_{\lambda} \otimes v_{\mu})=q^{\langle \lambda, \mu \rangle} v_{\lambda} \otimes v_{\mu}.
\end{equation}
\end{definition}
The twist on $\mcC$ is given by the right partial trace of the braiding, and acts on an irreducible module $\irred{\lambda}$ of highest weight $\lambda \in \mfh^*$ as \cite[Equation 4.5]{R}:
\begin{equation}\label{twist}
\theta_{\irred{\lambda}}=q^{\langle \lambda,\lambda+2(1-p)\rho \rangle}Id_{\irred{\lambda}}.
\end{equation}

Connections between the the representation theory of $\overline{U}_q^H(\mathfrak{sl}_2)$ and the singlet vertex operator algebra $W^0_{A_1}(r)$ were established in \cite{CMR}. The identification \cite[Theorem 1]{CMR} between irreducible $\overline{U}_q^H(\mathfrak{sl}_2)$ and $W^0_{A_1}(r)$ modules preserved twists. 
The higher rank singlet $W^0_Q(r)$ (see \cite{CM} for details) is a subalgebra of the Heisenberg vertex algebra of rank is the rank of $Q$ and for $\lambda \in \sqrt{r}P$ it is expected that $F_\lambda$ has a simple $W^0_Q(r)$-submodule $M_\lambda$ that is a simple current. This Conjecture would follow from \cite[Cor. 4.8]{McRae} together with \cite{Su} if one could prove that $W_Q(r)$ is a simple vertex algebra and braided tensor category exists on a category of $W^0_Q(r)$ that contains all the $M_\lambda$. 

The twist is determined from the action of $e^{2\pi i L_0}$ with $L_0$ the Virasoro zero-mode of the vertex algebra. 
The twist acts on  Fock spaces $F_{\lambda}$ by the scalar $\theta_{F_{\lambda}}=e^{\pi i \langle \lambda, \lambda + \frac{2(1-r)}{\sqrt{r}}\rho \rangle}$. It then follows from Equation \ref{twist}, that the twists on $\mathrm{Rep}_{\langle s \rangle} W^0_Q(r)$ and $\mathrm{Rep}\,\overline{U}_q^H(\mfg)^{wt}$ act on $F_{\lambda}$ and $\irred{\sqrt{r}\lambda}$ by the same scalar. We therefore make the identification
\begin{equation}\label{id}
\phi: \mathrm{Rep}_{\langle s \rangle} W^0_Q(r) \mapsto \mathrm{Rep}\overline{U}_q^H(\mfg)^{wt}, \qquad M_{\lambda} \subset F_\lambda \to \irred{\sqrt{r} \lambda} 
\end{equation}
for $\lambda \in \sqrt{r}P$.
Note that the simple currents in $\mathrm{Rep}_{\langle s \rangle} W^0_Q(r)$ are conjecturally precisely the Fock spaces $M_{\lambda}\subset F_\lambda$ with $\lambda \in \sqrt{r}P$ while the simple currents in $\mathrm{Rep}\,\overline{U}_q^H(\mfg)^{wt}$ (for $\mfg$ of ADE type) are the irreducibles $\irred{\lambda}$ with $\lambda \in rP$ (see \cite[Remark 4.7]{R}), so $\phi$ identifies simple currents.

\subsection{Heisenberg vertex operator algebra}\label{Heis}

One of the examples we are interested in is the Deligne product $\mcC \boxtimes \mcH$ of the category $\mcC$ of $\overline{U}_q^H(\mfg)$ weight modules with a semi-simple category $\mcH$ of modules over the Heisenberg vertex operator algebra. We define and review the category $\mcH$ in this section following \cite{FB}. Tensor category structure is due to \cite{CKLR}.

Let $\mfh$ be the Cartan subalgebra of $\mfg$ and $\hat{\mfh}= \mfh\otimes \mbbC[t,t^{-1}] \oplus \mbbC K$ the corresponding affine Lie algebra. Let $\lambda \in \mfh^*$ and denote by $\mathsf{F}_{\lambda}$ the the usual Fock space
\[ \mathsf{F}_{\lambda}:= U(\hat{\mfh}) \otimes_{U(\mfh \otimes \mbbC[t] \oplus \mbbC K)}\mbbC \]
where $\mfh \otimes t\mbbC[t]$ acts trivially on $\mbbC$, $\mfh$ acts as $\lambda(h)$ for all $h \in \mfh$, and $K$ acts as 1. For $h \in \mfh$ and $n \in \mbbZ$ we adopt the notation $h(n):=h \otimes t^n \in \mfh \otimes \mbbC[t,t^{-1}]$ and define
\[ h(z):= \sum\limits_{n \in \mbbZ} h(n)z^{-n-1}.\]
The Fock space $\mathsf{H}:=\mathsf{F}_0$ carries the structure of a vertex operator algebra. Set $\mathsf{1}:= 1\otimes 1$ and for any $v:=h_1(-n_1) \cdots h_m(-n_m) \mathsf{1} \in \mathsf{H}$ where $h_1,...,h_m \in \mfh$, we define the vertex operator $Y(-,z)$ acting on $\mathsf{F}_{\lambda}$  by
\[ Y(v,z):= \colon [ \partial^{n_1-1}h_1(z)] \cdots [\partial^{n_m-1}h_m(z)] \colon,  \]
where $\partial^k=\frac{1}{n!}(\frac{d}{dz})^k$ and $\colon XY \colon$ denotes the normal ordering of two fields $X,Y$. Set $\omega:= \frac{1}{2}\sum\limits_{k=1}^n H_k(-1)^2\mathsf{1}$ where $H_1,...,H_n$ is an orthonormal basis of $\mfh$ with respect to the Killing form, and we denote $Y(\omega,z)=\sum\limits_{n\in \mbbZ} L_nz^{-n-1}$. Then, $\mathsf{H}:=\mathsf{F}_0$ is a vertex operator algebra and it has vertex tensor categories of Fock modules, but for that one has to ensure that conformal weight is real (see \cite[Theorem 2.3]{CKLR}, which requires the action of the $H_k$ to be either real or purely imaginary. For us the latter case is relevant, e.g.

$(\mathsf{H},Y,\mathds{1},\omega)$ is a simple vertex operator algebra called the Heisenberg vertex operator algebra with vacuum $\mathsf{1}$, state-field correspondence $Y(-,z)$, and Virasoro element $\omega$. 
\begin{definition}\label{Hcat}
We define $\mcH$ to be the category of $\mathsf{H}$-modules on which $\mathfrak h$ acts semisimply and $\lambda(H_k) \in i \mathbb R$ for all $k=1, \dots, n$. 
\end{definition}
This category is semisimple and generated by the Fock modules $\mathsf{F}_{\lambda}$, $\lambda \in \mfh^*$. Tensor products are additive in the index ($\mathsf{F}_{\lambda_1} \otimes \mathsf{F}_{\lambda_2} \cong \mathsf{F}_{\lambda_1+\lambda_2}$), so all Fock spaces are simple currents with $\mathsf{F}_{-\lambda}$ the inverse of $\mathsf{F}_{\lambda}$. The braiding and twist on $\mcH$ are given by
\begin{align}
\label{Hbraid} \mathrm{Braiding}& & c_{\mathsf{F}_{\lambda_1},\mathsf{F}_{\lambda_2}}&=\tau \circ e^{\pi i \langle \lambda_1, \lambda_2 \rangle}\\
\label{Htwist} \mathrm{Twist} & & \theta_{\mathsf{F}_{\lambda}}&=e^{\pi i \langle \lambda, \lambda \rangle}\mathrm{Id}_{\mathsf{F}_{\lambda}}
\end{align}
where $\tau$ is the usual flip map.

\section{Simple Current Extensions}

\setlength\parindent{0pt}
\setlength{\parskip}{\baselineskip}%

We want to construct and study braided tensor categories related to the module categories of the higher rank $W_Q(r)$ and $B_Q(r)$ vertex operator algebras. As described in the introduction, such categories can be constructed as categories of local modules for algebra objects in the category of weight modules $\mathrm{Rep}\overline{U}_q^H(\mfg)^{wt}$ and the Deligne product $\mathrm{Rep}\overline{U}_q^H(\mfg)^{wt} \boxtimes \mcH$ where $\mcH$ is defined in Definition \ref{Hcat}. Both cases can be handled simultaneously by considering categories of a particular form. Throughout this section, let $\mathcal{B}$ be a braided tensor category whose simple currents $\{\mbbC_{\lambda} \, | \, \lambda \in \mathcal{L}\}$ are indexed by a normed lattice $(\mathcal{L},\langle -,- \rangle)$ such that the braiding on simple currents is given by $c_{\mbbC_{\lambda_1},\mbbC_{\lambda_2}}(v_{\lambda_1} \otimes v_{\lambda_2})=q^{\langle \lambda_1, \lambda_2 \rangle}v_{\lambda_2} \otimes v_{\lambda_1}$ for a primitive $\ell$-th root of unity q and $\mbbC_{\lambda_1} \otimes \mbbC_{\lambda_2} \cong \mbbC_{\lambda_1+\lambda_2}$ where $\mbbC_0$ is the unit object in $\mcB$. We assume also that objects in $\mcB$ carry vector space structure over $\mbbC$ and $\mathrm{dim}\mathrm{End}(\mbbC_{\lambda})=1$ for all $\lambda \in \mcL$. For any lattice $L \subset \mathcal{L}$, define the object 
\[ \mathcal{A}_L:=\bigoplus\limits_{\lambda \in L} \mbbC_{\lambda} \in \mcB^{\oplus}.\]
where $\mcB^{\oplus}$ denotes an appropriate direct sum completion of $\mcB$ (see \cite{AR}).

\begin{theorem}\label{algebraobject}
$\mcA_L$ is an associative algebra object for all $L \subset \mathcal{L}$. $\mcA_L$ is commutative if and only if  $\sqrt{2/\ell}L$ is an even lattice. That is, if and only if $\langle \lambda,\lambda \rangle \in \ell \mbbZ$ and $2\langle \lambda,\mu \rangle \in \ell \mbbZ$ for all $\lambda,\mu \in L$.
\end{theorem}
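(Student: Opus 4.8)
The plan is to work one graded summand at a time, exploiting that each space $\Hom(\mbbC_\lambda \otimes \mbbC_\nu, \mbbC_{\lambda+\nu})$ is one-dimensional (it equals $\End(\mbbC_{\lambda+\nu})$ under the isomorphism $\mbbC_\lambda\otimes\mbbC_\nu\cong\mbbC_{\lambda+\nu}$). First I would fix, for each $\lambda$, a generating vector $v_\lambda$ of $\mbbC_\lambda$ (using the assumed vector-space structure on objects) and encode the product by scalars $\epsilon(\lambda,\nu)\in\mbbC^\times$ via $\mu(v_\lambda\otimes v_\nu)=\epsilon(\lambda,\nu)\,v_{\lambda+\nu}$, with unit $\iota$ the inclusion of $\mbbC_0=\mathds{1}$. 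Restricting the associativity axiom of Definition \ref{alg} to the summand $\mbbC_\lambda\otimes\mbbC_\nu\otimes\mbbC_\kappa$, whose image lies in the one-dimensional $\Hom(-,\mbbC_{\lambda+\nu+\kappa})$, converts it into the scalar $2$-cocycle identity $\epsilon(\lambda,\nu)\epsilon(\lambda+\nu,\kappa)=\epsilon(\nu,\kappa)\epsilon(\lambda,\nu+\kappa)$, because in the examples ($\mcC$ and $\mcC\boxtimes\mcH$) the associativity constraint acts as the identity on the underlying vector spaces of the pointed subcategory, so the associator contributes no scalar. This identity is solvable for every $L$ (e.g.\ $\epsilon\equiv 1$), which gives the first assertion; the unit axiom is immediate from $\epsilon(0,-)=\epsilon(-,0)=1$.

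For the ``only if'' direction of commutativity I would avoid the cocycle bookkeeping and use the braiding directly. On the summand $\mbbC_\lambda\otimes\mbbC_\nu$ the condition $\mu\circ c_{A,A}=\mu$ reads $\epsilon(\nu,\lambda)\,q^{\langle\lambda,\nu\rangle}=\epsilon(\lambda,\nu)$, and two specializations produce exactly the two stated conditions. Taking $\nu=\lambda$ gives $q^{\langle\lambda,\lambda\rangle}=1$; since each structure map $\mu|_{\mbbC_\lambda\otimes\mbbC_\lambda}$ is nonzero, primitivity of $q$ forces $\langle\lambda,\lambda\rangle\in\ell\mbbZ$. For the mixed condition I would iterate the axiom: from $\mu\circ c_{A,A}=\mu$ one gets $\mu\circ(c_{A,A}\circ c_{A,A})=\mu$, and the double braiding acts on $\mbbC_\lambda\otimes\mbbC_\nu$ as the scalar $q^{\langle\lambda,\nu\rangle+\langle\nu,\lambda\rangle}=q^{2\langle\lambda,\nu\rangle}$ (using symmetry of $\langle-,-\rangle$ and that the monodromy returns to the same summand). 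Hence $q^{2\langle\lambda,\nu\rangle}=1$, i.e.\ $2\langle\lambda,\nu\rangle\in\ell\mbbZ$, completing this direction.

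For the converse I would observe that the two conditions are precisely what make $q^{\langle\lambda,\nu\rangle}$ an alternating bicharacter on $L$: bimultiplicativity is automatic from bilinearity of $\langle-,-\rangle$, the vanishing on the diagonal $q^{\langle\lambda,\lambda\rangle}=1$ is the first condition, and antisymmetry $q^{\langle\lambda,\nu\rangle}q^{\langle\nu,\lambda\rangle}=q^{2\langle\lambda,\nu\rangle}=1$ is the second. Every alternating bicharacter on a lattice arises as the commutator $\epsilon(\lambda,\nu)/\epsilon(\nu,\lambda)$ of a bimultiplicative $2$-cocycle: ordering a basis $e_1,\dots,e_n$ of $L$ and setting $\epsilon(e_i,e_j)=q^{\langle e_i,e_j\rangle}$ for $i<j$ and $\epsilon(e_i,e_j)=1$ otherwise, then extending bimultiplicatively, yields (under the two conditions) $\epsilon(\lambda,\nu)/\epsilon(\nu,\lambda)=q^{\langle\lambda,\nu\rangle}$ on basis pairs and hence everywhere. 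A bimultiplicative $\epsilon$ automatically satisfies the $2$-cocycle identity, so $\mcA_L$ with this product is associative, and the relation $\epsilon(\lambda,\nu)=q^{\langle\lambda,\nu\rangle}\epsilon(\nu,\lambda)$ is exactly $\mu\circ c_{A,A}=\mu$; thus $\mcA_L$ is commutative. Finally, rescaling by $\sqrt{2/\ell}$ translates the conditions into the even-lattice statement, since $\langle\sqrt{2/\ell}\,\lambda,\sqrt{2/\ell}\,\lambda\rangle=\frac{2}{\ell}\langle\lambda,\lambda\rangle\in 2\mbbZ$ is equivalent to $\langle\lambda,\lambda\rangle\in\ell\mbbZ$, and integrality of the off-diagonal rescaled form is equivalent to $2\langle\lambda,\nu\rangle\in\ell\mbbZ$.

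The step I expect to be the main obstacle is conceptual rather than computational: pinning down the meaning of ``$\mcA_L$ is commutative'' given that the associative structure is determined only up to the choice of $\epsilon$. The cleanest formulation is that $\mcA_L$ admits a commutative structure if and only if the conditions hold, with the ``only if'' argument above valid for \emph{any} commutative structure and the ``if'' direction producing one explicitly via the ordered-basis cocycle. A secondary technical point to check carefully is that each graded component $\mu|_{\mbbC_\lambda\otimes\mbbC_\nu}$ is genuinely an isomorphism, so that the scalar identities above are not vacuous; this is the standard fact that a unital $L$-graded product on a sum of invertible objects has all components invertible, which one verifies using associativity against the inverse $\mbbC_{-\nu}$ of $\mbbC_\nu$ together with the unit axiom.
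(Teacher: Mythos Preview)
Your proof is correct and follows the same overall strategy as the paper --- reduce the algebra axioms on $\mcA_L$ to scalar identities on the structure constants, then analyze the resulting $2$-cocycle and commutator conditions --- but your execution is genuinely more streamlined. The paper, starting from an \emph{arbitrary} system of structure constants $t_{\lambda_1,\lambda_2}$, builds an explicit coboundary $\phi_\lambda$ by a recursive procedure on an ordered basis of $L$ and normalizes to the canonical cocycle $t'_{\lambda,\mu}=\prod_k q^{\langle \lambda^{>k},\mu^k\rangle}$; it then verifies associativity and the commutativity condition by direct computation with this explicit formula. You instead (i) dispense with normalization for associativity by simply exhibiting $\epsilon\equiv 1$, (ii) handle the ``only if'' direction cleanly via the monodromy $\mu\circ c^2=\mu$ (the paper extracts the same conditions from the normalized cocycle by a longer calculation), and (iii) for the ``if'' direction write down the bimultiplicative cocycle on an ordered basis --- which is exactly the paper's $t'$ up to reversing the ordering convention. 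What the paper's longer route buys is the byproduct that \emph{any} associative structure on $\mcA_L$ is isomorphic to this canonical one (hence Corollary~\ref{scalar}, used later in Proposition~\ref{simpleinduction}); your argument recovers the nonvanishing of the components by the separate unit-plus-inverse argument you sketch at the end, which is fine but does not give uniqueness up to isomorphism. Both proofs tacitly assume the associator is trivial on the pointed subcategory; you flag this, the paper does not.
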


\begin{proof}

To realize $\mcA_{L}$ as a commutative algebra object in $\mcB^{\oplus}$, we must define a product $\mu:\mcA_{L} \otimes \mcA_{L} \to \mcA_{L}$ and unit $\iota:\mbbC_0 \to \mcA_{L}$ satisfying the associativity, unit, and commutativity constraints:
\begin{align}
\label{associative} \mu \circ (\mu \otimes \Id_{\mcA_{L}})&=\mu \circ ( \Id_{\mcA_{L}}\otimes \mu) \circ a_{\mcA_{L},\mcA_{L},\mcA_{L}}, \\
\label{unit}\mu \circ (\iota \otimes \Id_{\mcA_{L}}) \circ l_{\mcA_L}^{-1}&=\Id_{\mcA_{L}}, \\
\label{commutative}\mu\circ c_{\mcA_{L},\mcA_{L}} &= \mu,
\end{align}
where $a_{\mcA_{L},\mcA_{L},\mcA_{L}}$ is the associativity constraint, $l_{\mcA_L}$ the left unit constraint, and $c_{\mcA_{L},\mcA_{L}}$ the braiding. The unit object $\mbbC_0$ is a summand of $\mcA_L$ so we take $\iota$ to be the inclusion map. Since $\mathrm{dimEnd}(\mbbC_{\lambda})=1$, we have 
\begin{equation}\label{product} \mu|_{\mbbC_{\lambda_1} \otimes \mbbC_{\lambda_2}}=t_{\lambda_1,\lambda_2} \mathrm{Id}_{\mbbC_{\lambda_1+\lambda_2}}\end{equation}
for some $t_{\lambda_1,\lambda_2} \in \mbbC$ and it is easy to see that $\mcA_L$ is a commutative algebra object if and only if these scalars satisfy
\begin{equation}
 t_{\lambda_1+\lambda_2,\lambda_3}t_{\lambda_1,\lambda_2}=t_{\lambda_1,\lambda_2+\lambda_3}t_{\lambda_2,\lambda_3},\qquad
\label{tcondition} t_{\lambda,0}=t_{0,\lambda}=1 \quad \text{and} \quad
t_{\lambda_1,\lambda_2}=t_{\lambda_2,\lambda_1}q^{\langle \lambda_1, \lambda_2 \rangle}.
\end{equation}

Let $\mcA'_L$ denote the same object with algebra structure given by structure constants $t'_{\lambda_1,\lambda_2}$. It is easy to see that $\mcA_L$ and $\mcA_L'$ are isomorphic if and only if there exist scalars $\phi_{\lambda}, \lambda \in L$ such that
\begin{equation}\label{algebrascalars} t'_{\lambda_1,\lambda_2}=\frac{\phi_{\lambda_1+\lambda_2}}{\phi_{\lambda_1}\phi_{\lambda_2}} t_{\lambda_1,\lambda_2}.\end{equation}
Denote by $\{ \gamma_i\}_{i=1}^m$ the generators of $L$. Then we can write any $\lambda \in L$ as $\lambda=\sum\limits_{i=1}^m n_i\gamma_i$ for some $n_i \in \mbbZ$, and we fix the notation
\begin{equation}\label{not1}
\lambda^{>k}=\sum\limits_{i>k}n_i\gamma_i, \qquad \lambda^{\geq k}= \sum\limits_{i \geq k} n_i\gamma_i, \qquad \lambda^k=n_k\gamma_k. 
\end{equation}
with $\lambda^{<k},\lambda^{\leq k}$ definied similarly. Fix non-zero scalars $\phi_{\gamma_i}$ and define the scalar $\phi_{\lambda}$ for any $\lambda \in L$ recursively by the relations
\begin{align*}
\phi_0&=1 & \phi_{n\gamma_i}&=\frac{\phi_{(n-1)\gamma_i}\phi_{\gamma_i}}{t_{(n-1)\gamma_i,\gamma_i}}  & \phi_{\lambda}=\frac{\phi_{\lambda^{<m}} \phi_{\lambda^m}}{t_{\lambda^{<m}, \lambda^m}}
\end{align*}
and let $t'_{\lambda_1,\lambda_2}$ be the scalars determined by equation \eqref{algebrascalars}. We have the following:
\begin{align}\label{1st}
t'_{0,\lambda}&=t'_{\lambda,0}=1, & t'_{n\gamma_i,\gamma_i}&=\frac{\phi_{(n+1)\gamma_i}}{\phi_{n\gamma_i}\phi_{\gamma_i}}t_{n\gamma_i,\gamma_i}=1, & t'_{\lambda^{<k},n\gamma_{k'}}&=\frac{\phi_{\lambda^{<k}+n\gamma_{k'}}}{\phi_{\lambda^{<k}}\phi_{n\gamma_{k'}}}t_{\lambda^{<k},n\gamma_{k'}}=1,
\end{align}
for all $n \in \mbbZ, i=1,...m, k' \geq k$. By associativity, we then have
\[ t'_{n\gamma_i,m\gamma_i}=\frac{t'_{(n+m-1)\gamma_i,\gamma_i}t'_{n\gamma_i,(m-1)\gamma_i}}{t'_{(m-1)\gamma_i,\gamma_i}}=t'_{n\gamma_i,(m-1)\gamma_i}.\]
We therefore see that $t'_{n\gamma_i,m\gamma_i}=1$ for all $n,m \in \mbbZ$ and $i=1,...,m$. We can translate this and equation \eqref{1st} into the notation of \eqref{not1} as  $t_{\lambda^k,\mu^k}=1$ and  $t_{\lambda^{< k},\mu^k}=1$
for all $\lambda,\mu \in L$ and $k=1,...,m$. We then see that
\begin{equation}\label{2nd} t'_{\lambda^{\leq k},\mu^k}=\frac{t'_{\lambda^k,\mu^k}t'_{\lambda^{<k},\lambda^k+\mu^k}}{t'_{\lambda^{<k},\lambda^k}}=1\end{equation}
for any $\lambda,\mu \in L$. It follows again by associativity and equation \eqref{2nd} that for any $k$, $t'_{\lambda,\mu^{<k}}=t'_{\lambda+\mu^{<k-1},\mu^{k-1}}t'_{\lambda,\mu^{<k-1}}$. Then,
\begin{equation}
t'_{\lambda,\mu}=t'_{\lambda+\mu^{<m},\mu^m}t'_{\lambda,\mu^{<m}}
\label{3rd}=t'_{\lambda+\mu^{<m},\mu^m}t'_{\lambda+\mu^{<m-1},\mu^{m-1}}t'_{\lambda,\mu^{<m-2}}
=\cdots =\prod\limits_{k=1}^mt'_{\lambda+\mu^{<k},\mu^k}
\end{equation}
Then, for any $\lambda,\mu \in L$ and any $k=1,...,m$ we have
\begin{equation*}
t'_{\lambda^{\leq k},\mu^{>k}}=\prod\limits_{s=1}^m t'_{\lambda^{\leq k}+(\mu^{>k})^{<s},(\mu^{>k})^s}
=\prod\limits_{s=k+1}^mt'_{\lambda^{\leq k}+(\mu^{>k})^{<s}),\mu^s}
=\prod\limits_{s=k+1}^m t'_{(\lambda^{\leq k}+ \mu^{>k})^{<s},\mu^s}=1
\end{equation*}
 where we have used equation \eqref{1st} and \eqref{2nd}, and the fact that $s>k$ in the last line. So, we have $t'_{\lambda,\mu^k}=t'_{\lambda^{\leq k}+\lambda^{>k},\mu^k}t'_{\lambda^{\leq k},\lambda^{>k}}=t'_{\lambda^{\leq k},\lambda^{>k}+\mu^k}t'_{\lambda^{>k},\mu^k}=t'_{\lambda^{>k},\mu^k}$ for any $\lambda,\mu \in L$. In particular, $t'_{\lambda+\mu^{<k},\mu^k}=t'_{\lambda^{>k},\mu^k}$, so equation \eqref{3rd} can be rewritten as
\begin{align}\label{decomp}
t'_{\lambda,\mu}=\prod\limits_{k=1}^mt'_{\lambda^{>k},\mu^k}=\prod\limits_{k=1}^mq^{\langle \lambda^{>k},\mu^k \rangle} t'_{\mu^k,\lambda^{>k}}=\prod\limits_{k=1}^m q^{\langle \lambda^{>k},\mu^k \rangle}
\end{align}

We are now ready to evaluate equations \ref{tcondition}. Let $\lambda_i=\sum\limits_{j=1}^m n_j^i \gamma_j \in L$ ($i=1,2,3$). Then, we have
\begin{align*}
\qquad & & t'_{\lambda_1+\lambda_2,\lambda_3}t'_{\lambda_1,\lambda_2}&=t'_{\lambda_1,\lambda_2+\lambda_3}t'_{\lambda_2,\lambda_3}\\
\Leftrightarrow & & \left( \prod\limits_{k=1}^m t'_{\lambda_1+\lambda_2,\lambda_3^k} \right) \left( \prod\limits_{k=1}^m t'_{\lambda_1, \lambda_2^k} \right)&= \left( \prod\limits_{k=1}^m t'_{\lambda_1, \lambda_2^k + \lambda_3^k} \right) \left( \prod\limits_{k=1}^mt'_{\lambda_2, \lambda_3^k} \right)\\
\Leftrightarrow & & \left( \prod\limits_{k=1}^m q^{\langle \lambda_1^{>k} + \lambda_2^{>k}, \lambda_3^k \rangle} \right) \left( \prod\limits_{k=1}^m q^{\langle \lambda_1^{>k}, \lambda_2^k \rangle} \right) &= \left( \prod\limits_{k=1}^m q^{\langle \lambda_1^{>k}, \lambda_2^k + \lambda_3^k \rangle} \right) \left(\prod\limits_{k=1}^m q^{ \langle \lambda_2^{>k} , \lambda_3^k \rangle} \right)\\
\Leftrightarrow & & \prod\limits_{k=1}^m q^{\langle \lambda_1^{>k},\lambda_3^k \rangle }q^{\langle \lambda_2^{>k}, \lambda_3^k \rangle} q^{\langle \lambda_1^{>k}, \lambda_2^k \rangle} &= \prod\limits_{k=1}^m q^{\langle \lambda_1^{>k},\lambda_2^k \rangle}q^{\langle \lambda_1^{>k}, \lambda_3^k \rangle} q^{\langle \lambda_2^{>k}, \lambda_3^k \rangle}
\end{align*}
which is trivially true for all $\lambda_i \in L$. Therefore, $\mcA_L$ is an algebra object for all $L \subset \mcL$. For commutativity, we obtain the following:
\begin{align*}
\qquad & & t'_{\lambda_1,\lambda_2}&=t'_{\lambda_2,\lambda_1}q^{\langle \lambda_1,\lambda_2 \rangle}\\
\Leftrightarrow & & \prod\limits_{k=1}^m t'_{\lambda_1, \lambda_2^k}&=q^{ \langle \lambda_1, \lambda_2 \rangle} \prod\limits_{k=1}^m t'_{\lambda_2, \lambda_1^k}\\
\Leftrightarrow& & \prod\limits_{k=1}^m q^{\langle \lambda_1^{>k}, \lambda_2^k \rangle}&=q^{\langle \lambda_1, \lambda_2 \rangle} \prod\limits_{k=1}^m q^{\langle \lambda_2^{>k}, \lambda_1^k \rangle}\\
\Leftrightarrow & & q^{\sum\limits_{k=1}^m \sum\limits_{j>k} n_j^1n_k^2 \langle \gamma_j, \gamma_k \rangle}&=q^{\sum\limits_{k=1}^m\sum\limits_{j>k} (n_k^1n_j^2+n_j^1n_k^2)\langle \gamma_j , \gamma_k \rangle + \sum\limits_{k=1}^m n_k^1n_k^2 \langle \gamma_k, \gamma_k \rangle} q^{\sum\limits_{k=1}^m \sum\limits_{j>k} n_j^2n_k^1 \langle \gamma_j, \gamma_k \rangle}\\
\Leftrightarrow & & 1&=q^{\sum\limits_{k=1}^m n_k^1n_k^2 \langle \gamma_k, \gamma_k \rangle}q^{\sum\limits_{k=1}^m \sum\limits_{j>k} 2n_k^1n_j^2\langle \gamma_j,\gamma_k \rangle}.\\
\end{align*}
Clearly, this holds for all $n_k^1,n_j^2 \in \mbbZ$ if $\langle \gamma_k, \gamma_k \rangle \in \ell \mbbZ$ for all $k$, and $2 \langle \gamma_j, \gamma_k \rangle \in \ell \mbbZ$ whenever $j \not = k$. It is also easy to see that we can choose appropriate coefficients to obtain the equations $q^{\langle \gamma_k , \gamma_k \rangle}=1$ and $q^{2 \langle \gamma_j, \gamma_k \rangle}=1$. Hence, commutativity holds if and only if 
$\langle \gamma_i , \gamma_i \rangle \in \ell \mbbZ$ and 
$2\langle \gamma_i, \gamma_j \rangle \in \ell \mbbZ\ \mathrm{for} \;  i\not =j$.
It is easy to check that this conditions implies 
$\langle \lambda,\lambda \rangle \in \ell \mbbZ$ and
$2\langle \lambda,\mu \rangle \in \ell \mbbZ$
for all $\lambda,\mu \in L$.

\end{proof}
We have the following useful corollary of the proof of Theorem \ref{algebraobject}:
\begin{corollary}\label{scalar}
The scalars $t_{\lambda_1,\lambda_2}$ of equation \eqref{product} defining the product $\mu:\mcA_L \otimes \mcA_L \to \mcA_L$ are non-zero.
\end{corollary}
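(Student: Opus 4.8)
The plan is to read the nonvanishing directly off the normal form for the structure constants that was produced in the proof of Theorem \ref{algebraobject}, rather than to argue abstractly from the axioms \eqref{tcondition} alone. Recall that in that proof the given product was replaced, up to isomorphism of algebra objects, by the one whose structure constants are
\[ t'_{\lambda,\mu}=\prod_{k=1}^m q^{\langle \lambda^{>k},\mu^k\rangle}, \]
as recorded in \eqref{decomp}. Each factor is an integer power of the primitive $\ell$-th root of unity $q$, so $t'_{\lambda,\mu}$ is itself a root of unity and in particular nonzero for all $\lambda,\mu\in L$. Thus the representative $\mcA'_L$ of the isomorphism class has nowhere-vanishing structure constants, and the whole corollary reduces to transporting this fact back along the isomorphism.

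First I would transport back to $\mcA_L$. By \eqref{algebrascalars} the two sets of constants are related by $t_{\lambda,\mu}=(\phi_\lambda\phi_\mu/\phi_{\lambda+\mu})\,t'_{\lambda,\mu}$, so it suffices to know that every $\phi_\lambda$ is nonzero; then $t_{\lambda,\mu}$ is a quotient of products of nonzero scalars and the claim follows. The $\phi_\lambda$ are assembled multiplicatively from the freely chosen nonzero seeds $\phi_{\gamma_i}$ via the recursion in the proof, so their nonvanishing is clear once the scalars occurring in the denominators of that recursion, namely $t_{(n-1)\gamma_i,\gamma_i}$ and $t_{\lambda^{<m},\lambda^m}$, are themselves known to be nonzero.

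The one point that needs care, and which I expect to be the crux, is precisely this well-definedness of the recursion: a priori a denominator could vanish, and indeed for a general algebra structure satisfying only \eqref{tcondition} there is no reason for the constants to be nonzero. I would dispose of this for the constructed product by avoiding the recursion entirely and instead using the identity $t_{\lambda+\mu,-\mu}\,t_{\lambda,\mu}=t_{\mu,-\mu}$, obtained from the associativity relation in \eqref{tcondition} by taking the three arguments to be $\lambda,\mu,-\mu$ and invoking $t_{\lambda,0}=1$. Since the constructed product is the one with constants $t'$, the right-hand side $t'_{\mu,-\mu}=\prod_{k}q^{-\langle \mu^{>k},\mu^k\rangle}$ is manifestly a power of $q$ and hence nonzero for every $\mu\in L$; as the product of two scalars over a field is nonzero exactly when both factors are, this forces $t_{\lambda,\mu}\neq 0$ for all $\lambda,\mu$, with no circularity. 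Equivalently one may simply take $\mcA_L$ to be the representative $\mcA'_L$, whose constants are given by the displayed product and are nonzero outright, making the passage through the scalars $\phi_\lambda$ unnecessary altogether.
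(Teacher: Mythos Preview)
Your proposal is correct and matches the paper, which gives no separate proof and simply records this as a corollary of the proof of Theorem \ref{algebraobject}: the normal form \eqref{decomp} exhibits each structure constant as a product of integer powers of $q$, hence a nonzero root of unity, and the product on $\mcA_L$ is taken to be this one. Your first paragraph and your final sentence state exactly this.

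The detour in the middle is unnecessary and contains a slip worth flagging: you compute $t'_{\mu,-\mu}$ from \eqref{decomp} but then conclude that $t_{\lambda,\mu}\neq 0$, conflating the two systems of constants. The associativity identity $t_{\lambda+\mu,-\mu}\,t_{\lambda,\mu}=t_{\mu,-\mu}$ holds in whichever system you apply it; applied to the $t'$'s it only re-proves what \eqref{decomp} already gives outright, while applied to an arbitrary $t$ it does not help since $t_{\mu,-\mu}\neq 0$ is precisely what is unknown. As you yourself say at the end, the clean resolution is simply to take $\mcA_L$ to be the representative with constants \eqref{decomp}, which makes the whole passage through the $\phi_\lambda$ and the associativity trick superfluous.
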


Let $L^{\mu}$ be the lattice defined by adding a generator $\mu \in \mcL$ to a lattice $L \subset \mcL$, and let 
\[ \mcA_{L^{\mu}}=\bigoplus\limits_{\lambda \in L^{\mu}} \mbbC_{\lambda} \]
 be the corresponding algebra object. Note that if $\mu=0$, we recover the usual algebra objects defined above. If $\mu \not \in L$ and $2\mu \in L$, then every element in $L^{\mu}$ is in $L$ or $\mu+L$, so we can decompose $\mcA_{L^{\mu}}$ as $\mcA_{L^{\mu}}^{\bar{0}} \oplus \mcA_{L^{\mu}}^{\bar{1}}$ where
\[ \mcA_{L^{\mu}}^{\bar{0}}=\bigoplus\limits_{\lambda \in L} \mbbC_{\lambda}=\mcA_L,   \qquad    \mcA_{L^{\mu}}^{\bar{1}}=\bigoplus\limits_{\lambda \in L} \mbbC_{\mu+\lambda}, \]
and it is easy to show that $\mu(\mcA_{L^{\mu}}^{\bar{i}} \otimes \mcA_{L^{\mu}}^{\bar{j}}) \subset \mcA_{L^{\mu}}^{\overline{i+j}}$ since $\mathrm{dim}\mathrm{Hom}(\mbbC_{\lambda_1},\mbbC_{\lambda_2})=\delta_{\lambda_1,\lambda_2}$. Hence, $\mcA_{L^{\mu}}$ is a superalgebra object, and we have shown the first part of the following proposition:
\begin{proposition}\label{superalg}
Let $L \subset \mcL$ such that $\mcA_L$ is commutative and $\mu \in \mcL$ such that $\mu \not \in L$ and $2 \mu \in L$, then $\mcA_{L^{\mu}}$ is a superalgebra. $\mcA_{L^{\mu}}$ is supercommutative if and only if
\begin{align*}
2\langle \mu, \mu \rangle \in \ell \mbbZ \setminus 2\ell \mbbZ  \qquad \text{and} \qquad  2\langle \mu, \lambda \rangle  \in \ell \mbbZ 
\end{align*}
for all $\lambda\in L$.
\end{proposition}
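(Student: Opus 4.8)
The plan is to reduce supercommutativity to a condition on the structure constants $t_{\lambda_1,\lambda_2}$ introduced in \eqref{product}, exactly as in the proof of Theorem \ref{algebraobject}, and then to read off the two congruences from a single ``diagonal'' computation in the odd sector. Since $\mcA_{L^\mu}$ is an algebra object by Theorem \ref{algebraobject} applied to the lattice $L^\mu$, its multiplication is given on simple summands as in \eqref{product} by nonzero scalars $t_{\lambda_1,\lambda_2}$ (Corollary \ref{scalar}) obeying the cocycle and unit relations \eqref{tcondition}. Writing $p\colon L^\mu\to\mbbZ/2\mbbZ$ for the parity homomorphism ($p\equiv0$ on $L$ and $p\equiv1$ on $\mu+L$) and using the braiding $c_{\mbbC_{\lambda_1},\mbbC_{\lambda_2}}(v_{\lambda_1}\otimes v_{\lambda_2})=q^{\langle\lambda_1,\lambda_2\rangle}v_{\lambda_2}\otimes v_{\lambda_1}$, I would first show that the supercommutativity condition \eqref{supercom} becomes, summand by summand,
\[ t_{\lambda_1,\lambda_2}=(-1)^{p(\lambda_1)p(\lambda_2)}\,q^{\langle\lambda_1,\lambda_2\rangle}\,t_{\lambda_2,\lambda_1}\qquad\text{for all }\lambda_1,\lambda_2\in L^\mu. \]

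Next I would record that the ratio $\epsilon(\lambda_1,\lambda_2):=t_{\lambda_1,\lambda_2}/t_{\lambda_2,\lambda_1}$ is well defined and nonzero, is invariant under the gauge freedom \eqref{algebrascalars} (hence an invariant of the algebra object, not of the chosen representative), and, as a consequence of the cocycle relation in \eqref{tcondition}, is bimultiplicative with $\epsilon(\lambda,\lambda)=1$; that is, $\epsilon$ is an alternating bicharacter on $L^\mu$. The displayed condition then says precisely that $\epsilon$ coincides with $\tilde R(\lambda_1,\lambda_2):=(-1)^{p(\lambda_1)p(\lambda_2)}q^{\langle\lambda_1,\lambda_2\rangle}$. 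Since $\tilde R$ is manifestly a bicharacter (a product of the bicharacters $q^{\langle-,-\rangle}$ and $(-1)^{p(-)p(-)}$), and since every alternating bicharacter on the lattice $L^\mu$ is realized as the ratio of some associative cocycle extending the given commutative one on $L$ (the same ordered-product construction that yields the canonical form in Theorem \ref{algebraobject}), I would conclude that $\mcA_{L^\mu}$ admits a supercommutative structure if and only if $\tilde R$ is \emph{alternating}.

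It then remains to determine when $\tilde R(\lambda,\lambda)=1$ for all $\lambda\in L^\mu$. On the even part $L$ this is automatic, because commutativity of $\mcA_L$ forces $\langle\lambda,\lambda\rangle\in\ell\mbbZ$ by Theorem \ref{algebraobject}. On the odd part, writing $\lambda=\mu+\nu$ with $\nu\in L$ and using $q^{\langle\nu,\nu\rangle}=1$, I get
\[ \tilde R(\mu+\nu,\mu+\nu)=-\,q^{\langle\mu,\mu\rangle+2\langle\mu,\nu\rangle}. \]
Setting $\nu=0$ gives $q^{\langle\mu,\mu\rangle}=-1$, equivalent to $2\langle\mu,\mu\rangle\in\ell\mbbZ\setminus2\ell\mbbZ$; feeding this back, vanishing for arbitrary $\nu$ is equivalent to $q^{2\langle\mu,\nu\rangle}=1$, i.e.\ $2\langle\mu,\nu\rangle\in\ell\mbbZ$ for all $\nu\in L$. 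These are exactly the two asserted conditions, and conversely they make $\tilde R$ alternating, giving both directions.

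The hard part will be the realizability claim of the second paragraph: that, once $\tilde R$ is alternating, one can genuinely choose the odd-sector scalars $t_{\mu+\nu_1,\mu+\nu_2}$ and $t_{\lambda,\mu+\nu}$ so that the \emph{whole} system \eqref{tcondition} holds with ratio exactly $\tilde R$, while keeping the even block in the isomorphism class of the prescribed $\mcA_L$. The clean way to handle this is to mimic the generator bookkeeping of Theorem \ref{algebraobject}, but now for a generating set of $L^\mu$ adapted to the index-two inclusion $L\subset L^\mu$; note that $2\mu\in L$ forces $\mathrm{rank}(L^\mu)=\mathrm{rank}(L)$, so $\mu$ is not an independent generator and the relation $2\mu\in L$ must be tracked carefully through the recursion for the gauge scalars $\phi_\lambda$. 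Equivalently, one invokes that for a finitely generated free lattice the assignment sending a cocycle to its commutator bicharacter $\epsilon$ is surjective onto alternating bicharacters. Securing this compatibility, rather than the purely formal sign chase across the four parity sectors, is the only genuinely delicate point.
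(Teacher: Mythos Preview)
Your approach is correct and takes a genuinely different route from the paper's. The paper proceeds by direct computation with the explicit canonical form \eqref{decomp} for the structure constants: it adjoins $\mu$ as an extra generator $\gamma_{m+1}$ to the generating set $\gamma_1,\dots,\gamma_m$ of $L$, reads off from \eqref{decomp} the three relations
\[
t_{\mu+\lambda_1,\mu+\lambda_2}=q^{\langle\mu,\lambda_2\rangle}t_{\lambda_1,\lambda_2},\qquad
t_{\mu+\lambda_1,\lambda_2}=q^{\langle\mu,\lambda_2\rangle}t_{\lambda_1,\lambda_2},\qquad
t_{\lambda_1,\mu+\lambda_2}=t_{\lambda_1,\lambda_2},
\]
and then checks the three parity sectors of \eqref{supercom} by hand, using also that $t_{\lambda_1,\lambda_2}=q^{\langle\lambda_1,\lambda_2\rangle}t_{\lambda_2,\lambda_1}$ on $L$. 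Your route via the gauge-invariant commutator bicharacter $\epsilon$ is more conceptual: it compresses the sector-by-sector verification into the single condition that $\tilde R$ be alternating, and makes the role of the two congruences transparent as the diagonal obstruction on the odd coset. The realizability step you flag as delicate is actually not hard once phrased cohomologically: since $L^\mu$ is free abelian and $\mbbC^\times$ is divisible, the commutator map from $2$-cocycles (mod coboundaries) to alternating bicharacters is bijective, and any coboundary on $L$ extends to one on $L^\mu$ by taking roots; this disposes of the rank constraint $2\mu\in L$ without tracking it through the recursion for the $\phi_\lambda$. What your approach buys is a cleaner logical structure and independence from any choice of generators; what the paper's approach buys is complete explicitness and no appeal to facts about group cohomology---though it does, as you correctly anticipate, apply \eqref{decomp} with a non-free generating set, which works here only because every element of $L^\mu$ has a canonical expression as $\lambda$ or $\mu+\lambda$ with $\lambda\in L$.
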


\begin{proof}
It follows from the proof of Theorem \ref{algebraobject} that $\mcA_{L^{\mu}}$ is isomorphic to an algebra object with structure constants $t_{\lambda,\mu}$ as in equation \eqref{product} satisfying equation \eqref{decomp}. Recall that
\[ \mcA_{L^{\mu}}=\bigoplus\limits_{\lambda \in L} \mbbC_{\lambda} \oplus \bigoplus\limits_{\lambda \in L}\mbbC_{\mu+\lambda} \]
where $\bigoplus\limits_{\lambda \in L} \mbbC_{\lambda}$ and $\bigoplus\limits_{\lambda \in L}\mbbC_{\mu+\lambda}$ are the $\bar{0}$ and $\bar{1}$ components of the $\mbbZ_2$-grading respectively. For $\mcA_{L^{\mu}}$ to be supercommutative, $\mcA_L$ must be commutative which holds if and only if $\langle \lambda,\lambda \rangle \in \ell \mbbZ$ and $2 \langle \lambda, \lambda' \rangle \in \ell \mbbZ$ by Theorem \ref{algebraobject}. It then follows from equation \eqref{supercom} that to show $\mcA_{L^{\mu}}$ is supercommutative, we need to show that 
\begin{align}\label{scomm1}
t_{\mu+\lambda_1,\mu+\lambda_2}&=-q^{\langle \mu+\lambda_1, \mu+\lambda_2 \rangle} t_{\mu+\lambda_2,\mu+\lambda_1}\\
\label{scomm2} t_{\mu+\lambda_1,\lambda_2}&=q^{\langle \mu+\lambda_1,\lambda_2 \rangle}t_{\lambda_2,\mu+\lambda_1}\\
\label{scomm3} t_{\lambda_1,\mu+\lambda_2}&=q^{\langle \lambda_1, \mu+\lambda_2 \rangle}t_{\mu+\lambda_2,\lambda_1}
\end{align}
for all $\lambda_1,\lambda_2 \in L$. Let $\gamma_1,...,\gamma_{m}$ be a generating set for $L$, and $\gamma_{m+1}=\mu$. We can apply Equation \eqref{decomp} to obtain the following relations
\begin{align*}
t_{\mu+\lambda_1,\mu+\lambda_2}&=q^{\langle \mu, \lambda_2 \rangle} t_{\lambda_1,\lambda_2},\qquad
t_{\mu+\lambda_1,\lambda_2}&=q^{\langle \mu, \lambda_2 \rangle}t_{\lambda_1,\lambda_2},\qquad
t_{\lambda_1, \mu+\lambda_2}&=t_{\lambda_1,\lambda_2},
\end{align*}
for all $\lambda_1,\lambda_2 \in L$. It then follows from an easy computation using the fact that $t_{\lambda_1,\lambda_2}=q^{\langle \lambda_1, \lambda_2 \rangle}t_{\lambda_2,\lambda_1}$ (from the proof of Theorem \ref{algebraobject}) that $\mcA_{L^{\mu}}$ is supercommutative if and only if
$2\langle \mu, \mu \rangle \in \ell \mbbZ \setminus 2\ell \mbbZ$  and $2\langle \mu, \lambda \rangle  \in \ell \mbbZ$ 
for all $\lambda \in L$. 
\end{proof}

The categories $\mcC$ and $\mcC \boxtimes \mcH$ consist of finite length modules and therefore all indecomposables have simple submodules. If we assume this property for $\mathcal{B}$, then we have the following proposition which was proven in \cite[Proposition 4.4]{CKM} for module categories of vertex operator algebras, but holds more generally:

\begin{proposition}\label{simpleinduction} Suppose every indecomposable object in $\mcB$ has a simple subobject. Then $N \in \mathrm{Rep}^0\mcA_{L^{\mu}}$ is simple if and only if $N \cong \mcF(M)$ for a simple object $M \in \mcB$.
\end{proposition}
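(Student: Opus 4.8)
The plan is to reduce the whole statement to one key lemma — that induction carries simples to simples — and then to read off both implications from Frobenius reciprocity \eqref{reciprocity}. The structural input I would record first is that, because $\mcA_{L^{\mu}}$ is a direct sum of the \emph{invertible} simple currents $\mbbC_{\lambda}$, the restriction of an induced simple is semisimple: for simple $M \in \mcB$ one has $\mcG\mcF(M) \cong \bigoplus_{\lambda \in L^{\mu}} \mbbC_{\lambda} \otimes M$, and each summand $\mbbC_{\lambda}\otimes M$ is again simple in $\mcB$ since tensoring with an invertible object preserves simplicity. Moreover, by Corollary \ref{scalar} the structure constants $t_{\nu,\lambda}$ are nonzero, so the component $\mbbC_{\nu} \subset \mcA_{L^{\mu}}$ acts on the $\lambda$-graded piece by an isomorphism $\mbbC_{\lambda}\otimes M \xrightarrow{\sim} \mbbC_{\nu+\lambda}\otimes M$; thus $\{\mbbC_{\lambda}\otimes M\}_{\lambda \in L^{\mu}}$ forms a single transitive orbit under the $\mcA_{L^{\mu}}$-action.

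First I would establish the key lemma: $\mcF(M)$ is simple in $\mathrm{Rep}\,\mcA_{L^{\mu}}$ whenever $M$ is simple. Granting that the summands $\mbbC_{\lambda}\otimes M$ are pairwise non-isomorphic, the object $\mcG\mcF(M)$ is multiplicity-free semisimple, so every $\mcA_{L^{\mu}}$-submodule $N' \subseteq \mcF(M)$ is, as an object of $\mcB$, a sub-sum $\bigoplus_{\lambda \in S}\mbbC_{\lambda}\otimes M$ for a unique subset $S \subseteq L^{\mu}$. Being a submodule, $S$ must be stable under translation by all of $L^{\mu}$, and since that action is transitive $S$ is empty or all of $L^{\mu}$; hence $\mcF(M)$ has no proper nonzero submodule.

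With the lemma in hand both directions are short. For the backward implication, if $N \cong \mcF(M)$ with $M$ simple then $N$ is simple in $\mathrm{Rep}\,\mcA_{L^{\mu}}$, hence also in the full subcategory $\mathrm{Rep}^0\mcA_{L^{\mu}}$. For the forward implication, let $N \in \mathrm{Rep}^0\mcA_{L^{\mu}}$ be simple; by the standing hypothesis its restriction $\mcG(N) \in \mcB$ contains a simple subobject $M \hookrightarrow \mcG(N)$ (this is precisely what the assumption on $\mcB$ is there to supply), and Frobenius reciprocity \eqref{reciprocity} converts this nonzero map into a nonzero $\mcA_{L^{\mu}}$-morphism $\psi \colon \mcF(M) \to N$. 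Since $N$ is simple $\psi$ is surjective, and since $\mcF(M)$ is simple (key lemma) $\psi$ is injective; thus $\psi$ is an isomorphism and $N \cong \mcF(M)$, with $\mcF(M)\cong N$ automatically local.

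The crux — and the step I expect to require genuine care — is the pairwise non-isomorphism of the summands $\mbbC_{\lambda}\otimes M$, equivalently that the simple currents act \emph{freely} on simple objects: $\mbbC_{\lambda}\otimes M \cong M$ should force $\lambda = 0$. Without this, $\mcF(M)$ acquires nontrivial endomorphisms $\bigoplus_{\lambda}\mathrm{Hom}_{\mcB}(M,\mbbC_{\lambda}\otimes M)$ and splits as a module, so a simple $N$ would merely be a summand of, rather than isomorphic to, $\mcF(M)$, and the stated biconditional would fail. In the categories $\mcC$ and $\mcC \boxtimes \mcH$ of interest this freeness is automatic, since tensoring $\irred{\beta}$ with a simple current shifts its highest weight in $\mfh^*$ by a nonzero amount; I would therefore isolate this as the "additional assumption" on $\mcB$ under which the argument runs and verify it in the examples directly from the $\mfh^*$-weight grading.
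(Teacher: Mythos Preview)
Your argument is correct and follows the paper's proof essentially line for line: both establish simplicity of $\mcF(M)$ by writing $\mcG\mcF(M)=\bigoplus_{\lambda}M^{\lambda}$, using the nonzero structure constants (Corollary~\ref{scalar}) to see that the $\mcA_{L^{\mu}}$-action permutes the summands transitively, and concluding that any submodule is all or nothing; both then handle the converse by Frobenius reciprocity applied to a simple subobject of the restriction.

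Two remarks. First, a small imprecision: $\mcG(N)$ lives in $\mcB^{\oplus}$, not $\mcB$, so before invoking the hypothesis ``every indecomposable in $\mcB$ has a simple subobject'' you should, as the paper does, pass to an indecomposable summand $N'$ of $\mcG(N)$, which lies in $\mcB$. Second, you are right to isolate the freeness of the simple-current action (pairwise non-isomorphism of the $\mbbC_{\lambda}\otimes M$) as a needed hypothesis; the paper alludes to an ``additional assumption, which holds in the examples we consider'' in the introduction but uses it silently in the proof when asserting $\mcG(X)=\bigoplus_{\lambda\in T}M^{\lambda}$ for a \emph{subset} $T$. Your explicit identification of this point, and your verification of it in $\mcC$ and $\mcC\boxtimes\mcH$ via the $\mfh^*$-grading, is exactly what is required.
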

\begin{proof}
Let $M \in \mcB$ be simple and suppose $\mcF(M) \in \mathrm{Rep}\,\mcA_{L^{\mu}}$ is reducible with proper subobject $X$. We then have a $\mathrm{Rep}^0\mcA_{L^{\mu}}$-morphism $X \hookrightarrow \mcF(M)$ which gives a $\mcB$-morphism 
\[ \mcG(X) \hookrightarrow \mcG(\mcF(M))=\bigoplus\limits_{\lambda \in L^{\mu}} M^{\lambda}\in \mcB^{\oplus}\] by Frobenius reciprocity, where we have adopted the notation $M^{\lambda}:=\mbbC_{\lambda} \otimes M$. Since every $M^{\lambda} \in \mcB$ is simple, we have that 
\[\mcG(X)=\bigoplus\limits_{\lambda \in T} M^{\lambda}\]
for some non-empty subset $T \subset L^{\mu}$. By Corollary \ref{scalar}, the scalars $t_{\lambda_1,\lambda_2}$ defining the product on $\mcA_{L^{\mu}}$ are non-zero, so we see that the action of $\mcA_{L^{\mu}}$ on $\mcF(M)$ satisfies
\[ \mbbC_{\lambda} \cdot M^{\gamma} \cong M^{\lambda+\gamma}. \]
In particular, any summand $M^{\gamma}$ generates $\mcF(M)$, so $X=\mcF(M)$ and we see that $\mcF(M)$ has no non-trivial proper subobjects. It follows that the induction $\mcF(M)$ of a simple object $M \in \mcB$ is always simple.

Let $N \in \mathrm{Rep}^0\mcA_{L^{\mu}}$ be simple and $N'$ an indecomposable summand in $\mcG(N) \in \mcB^{\oplus}$ (so $N' \in \mcB$). Then $N'$ contains a simple submodule $M$ and we therefore have a non-zero map $f:M \to N' \hookrightarrow \mcG(N)$. By Frobenius reciprocity, we obtain a non-zero $\mathrm{Rep}\,\mcA_{L^{\mu}}$-morphism $g:\mcF(M) \to N$, which is an isomorphism because $N$ and $\mcF(M)$ are both simple.
\end{proof}
Given any superalgebra object $A=A^{\bar{0}} \oplus A^{\bar{1}}$, it is easy to check that $A^{\bar{1}}\in \mathrm{Rep}\,A^{\bar{0}}$ with action given by the restriction $\mu|_{A^{\bar{0}} \otimes A^{\bar{1}}}:A^{\bar{0}} \otimes A^{\bar{1}} \to A^{\bar{1}}$ of the product $\mu:A \otimes A \to A$. Suppose $A$ is supercommutative and a direct sum of simple currents such that $A^{\bar{0}}=\mcA_L$ for some lattice $L \subset \mcL$ and $A^{\bar{1}}$ is a non-trivial simple object in $\mathrm{Rep}\,A^{\bar{0}}$. Since $A$ is supercommutative, $A^{\bar{0}}=\mcA_L$ is commutative and because $A^{\bar{1}}$ is simple in $\mathrm{Rep}\,A^{\bar{0}}$, we know $A^{\bar{1}} \cong \mcF(M)$ for some simple module $M \in \mcB$ by Proposition \ref{simpleinduction}. Since $A$ is a direct sum of simple currents, $A^{\bar{1}}\cong\mcF(M)=\bigoplus\limits_{\lambda \in L} \mbbC_{\lambda} \otimes M$ is a direct sum of simple currents, so we must have $M \cong \mbbC_{\mu}$ for some $\mu \in \mcL$. Hence, we have
\[ A= A^{\bar{0}} \oplus A^{\bar{1}} \cong \mcA_L \oplus \mcG(\mcF(\mbbC_{\mu})) = \bigoplus\limits_{\lambda \in L} \mbbC_{\lambda} \oplus \bigoplus\limits_{\lambda \in L}\mbbC_{\mu+\lambda} =\mcA_{L^{\mu}}.\]
Since $m(\mbbC_{\mu} \otimes \mbbC_{\mu}) \subset \mcA_L$ we have $2\mu \in L$, and $\mu \not \in L$ otherwise $A^{\bar{1}}=\mcG(\mcF(\mbbC_{\mu}))=\mcA_L$ is trivial in $\mathrm{Rep}\mcA_L$. We therefore have the following:
\begin{corollary}\label{superalgclass}
Let $A=A^{\bar{0}} \oplus A^{\bar{1}}$ be a supercommutative superalgebra which is a direct sum of simple currents such that $A^{\bar{0}}=\mcA_L$ for some lattice $L \subset \mcL$ and $A^{\bar{1}}$ is a non-trivial simple object in $\mathrm{Rep}\,A^{\bar{0}}$. Then $A=\mcA_{L^{\mu}}$ for some lattice $L^{\mu} \subset \mcL$ is of the type described in Proposition \ref{superalg}.
\end{corollary}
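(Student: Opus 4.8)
The plan is to read the structure of $A$ off its three hypotheses — supercommutativity, $A^{\bar 0}=\mcA_L$, and simplicity of $A^{\bar 1}$ over $A^{\bar 0}$ — by first realizing $A^{\bar 1}$ as an induced module and then recovering the lattice data from the $\mbbZ_2$-grading.

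First I would note that the even part of a supercommutative superalgebra is an ordinary commutative algebra, so $A^{\bar 0}=\mcA_L$ is commutative and Theorem \ref{algebraobject} already constrains $L$. Next, $A^{\bar 1}$ is an $A^{\bar 0}=\mcA_L$-module via the restriction of the multiplication, and it is simple by hypothesis. Under the standing assumption that every indecomposable in $\mcB$ has a simple subobject, I would apply Proposition \ref{simpleinduction} (taking $\mu=0$ there, and noting that neither implication of its proof actually uses locality) to conclude $A^{\bar 1}\cong\mcF(M)$ for some simple $M\in\mcB$.

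The crux is then to identify $M$ as a simple current. Here I would use that $\mcG(\mcF(M))=\bigoplus_{\lambda\in L}\mbbC_{\lambda}\otimes M$ while, by hypothesis, $A$ and hence $A^{\bar 1}$ is a direct sum of simple currents. Since each functor $\mbbC_{\lambda}\otimes(-)$ is an autoequivalence of $\mcB$ with inverse $\mbbC_{-\lambda}\otimes(-)$, the object $\mbbC_{\lambda}\otimes M$ is a simple current if and only if $M$ is; hence $M\cong\mbbC_{\mu}$ for a unique $\mu\in\mcL$. Reassembling gives $A\cong\mcA_L\oplus\bigoplus_{\lambda\in L}\mbbC_{\mu+\lambda}$, an object supported on $L\cup(\mu+L)$.

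Finally I would pin down the lattice conditions. Non-triviality of $A^{\bar 1}$ in $\mathrm{Rep}\,\mcA_L$ forces $\mu\notin L$, since otherwise $\mcG(\mcF(\mbbC_{\mu}))\cong\mcA_L$ and the odd part degenerates to the even one. The $\mbbZ_2$-grading sends $A^{\bar 1}\otimes A^{\bar 1}$ into $A^{\bar 0}$; following the summand $\mbbC_{\mu}\otimes\mbbC_{\mu}\cong\mbbC_{2\mu}$ and using non-vanishing of the structure constants (Corollary \ref{scalar}) shows $\mbbC_{2\mu}$ occurs in $\mcA_L$, i.e.\ $2\mu\in L$. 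These two facts turn $L\cup(\mu+L)$ into the lattice $L^{\mu}$ generated by $L$ and $\mu$, so $A=\mcA_{L^{\mu}}$ with $\mu\notin L$ and $2\mu\in L$, which is exactly the input of Proposition \ref{superalg}; supercommutativity of $A$ then returns the numerical conditions stated there. The step I expect to be the main obstacle is the middle one: legitimately invoking Proposition \ref{simpleinduction} for an object of $\mathrm{Rep}\,\mcA_L$ rather than $\mathrm{Rep}^0\mcA_L$, and verifying that the simple-current property genuinely descends from $\mcF(M)$ to $M$; once $M\cong\mbbC_{\mu}$ is in hand, the remaining lattice bookkeeping is routine.
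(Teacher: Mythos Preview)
Your proposal is correct and follows essentially the same route as the paper's own argument: apply Proposition~\ref{simpleinduction} to realize $A^{\bar 1}\cong\mcF(M)$, use the simple-current hypothesis on $A$ to force $M\cong\mbbC_{\mu}$, and then read off $\mu\notin L$ and $2\mu\in L$ from non-triviality and the $\mbbZ_2$-grading. In fact you are slightly more careful than the paper in two places --- you flag that Proposition~\ref{simpleinduction} is stated for $\mathrm{Rep}^0$ but only its locality-free ingredients are used, and you invoke Corollary~\ref{scalar} to ensure the product on $\mbbC_{\mu}\otimes\mbbC_{\mu}$ is non-zero --- both of which the paper leaves implicit.
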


\section{Examples}
Recall from Definition \ref{weightmod} that a $\U$-module $V$ is called a weight module if the $H_i$ act semi-simply on $V$ and we have $K_{\gamma}=q^{\sum\limits_{i=1}^n k_id_i H_i}$ as operators on $V$ for $\gamma = \sum\limits_{i=1}^n k_i \alpha_i \in \mathsf{L}$, and $\mcC$ denotes the category of weight modules for $\U$ at $\ell$-th root of unity. We denote by $M^{\lambda}$ and $\irred{\lambda}$ the Verma and irreducible modules of highest weight $\lambda \in \mfh^*$ in $\mcC$, and by $P^{\lambda}$ the projective cover of $\irred{\lambda}$. As noted in \cite[Remark 4.6]{R}, the simple-currents in $\C$ are given by the set
\begin{equation}\label{currents} \left\{ S^{\lambda} \, | \, \lambda \in \mcL \right\} \end{equation}
where $\mcL:= \{ \lambda \in \mfh^* \, | \,  \lambda(H_i) \in \frac{\ell}{2d_i}\mbbZ\}$. Throughout this section, we will denote by $\Gamma(X)$ the set of weights of a module $X \in \C$, and we adopt the notation $\mbbC_{\lambda}:=S^{\lambda}$ when $\lambda \in \mcL$.

\begin{lemma}\label{ssbraid}
For any $\lambda \in \mcL$, the braiding $c_{-,-}$ acts as $\tau \circ \mathscr{H}$ (recall Equation \ref{braidingH}) on $X \otimes \mbbC_{\lambda}, \mbbC_{\lambda} \otimes X \in \C$ for all $X \in \C$.
\end{lemma}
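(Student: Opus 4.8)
The plan is to reduce the statement to a single structural fact --- that a simple current in $\mcC$ is one-dimensional --- after which the ``quantum'' factor $\mathscr{R}$ of the braiding automatically acts trivially whenever one of the two tensor factors is such an object, leaving only the Cartan part $\mathscr{H}$.

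First I would show that $\mbbC_\lambda=\irred{\lambda}$ is one-dimensional over $\mbbC$ for every $\lambda\in\mcL$. By definition a simple current is invertible, with inverse $\mbbC_{-\lambda}$, so $\mbbC_\lambda\otimes\mbbC_{-\lambda}\cong\mbbC_0$, the unit of $\mcC$. The functor forgetting the $\U$-action down to $\mbbC$-vector spaces is strong monoidal --- the underlying space of a tensor product of modules is the tensor product of the underlying spaces, and $\mbbC_0$ is one-dimensional --- so $(\dim_\mbbC\mbbC_\lambda)(\dim_\mbbC\mbbC_{-\lambda})=1$, forcing $\dim_\mbbC\mbbC_\lambda=1$. (Alternatively one may simply invoke the explicit description of the simple currents in \cite[Remark 4.6]{R}.)

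Next I would observe that every root vector annihilates such a module. Being one-dimensional, $\mbbC_\lambda$ carries the single weight $\lambda$; the defining relation $[H_i,X_{\pm j}]=\pm a_{ij}X_{\pm j}$ shows that $X_{\pm j}$ sends the weight-$\mu$ space into the weight-$(\mu\pm\alpha_i)$ space, which is zero here because $\lambda\pm\alpha_i\neq\lambda$. The identical weight-shifting argument applies to each higher root vector $X_{\pm\beta_i}$, which changes weights by $\pm\beta_i\neq 0$, so every $X_{\pm\beta_i}$ acts as zero on $\mbbC_\lambda$.

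Finally I would substitute this into $\mathcal{R}=\mathscr{H}\mathscr{R}$. In the product \eqref{braidingR} defining $\mathscr{R}$, the summand of index $j$ in the factor attached to $\beta_i$ carries $X_{\beta_i}^{\,j}$ in the left tensor slot and $X_{-\beta_i}^{\,j}$ in the right tensor slot. Acting on $X\otimes\mbbC_\lambda$ the factor $X_{-\beta_i}^{\,j}$ hits $\mbbC_\lambda$ and vanishes for $j\geq 1$, while acting on $\mbbC_\lambda\otimes X$ the factor $X_{\beta_i}^{\,j}$ hits $\mbbC_\lambda$ and vanishes for $j\geq 1$; in either case only the identity term $1\otimes 1$ survives, so $\mathscr{R}$ acts as the identity on both $X\otimes\mbbC_\lambda$ and $\mbbC_\lambda\otimes X$. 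Hence $\mathcal{R}=\mathscr{H}$ on these objects and the braiding $\tau\circ\mathcal{R}$ restricts to $\tau\circ\mathscr{H}$, which is the assertion. I expect no genuine obstacle: the only delicate point is the one-dimensionality of simple currents, which is exactly what makes the raising and lowering operators vanish; everything else is a direct substitution into the explicit $R$-matrix.
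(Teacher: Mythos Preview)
Your proposal is correct and follows essentially the same route as the paper: both arguments reduce to the observation that every root vector $X_{\pm\beta_i}$ annihilates $\mbbC_\lambda$, so only the $j=0$ term of $\mathscr{R}$ survives and the braiding collapses to $\tau\circ\mathscr{H}$. You supply more justification than the paper's one-line proof---in particular you establish one-dimensionality via invertibility and the monoidal forgetful functor, and you explicitly note that the non-simple root vectors $X_{\pm\beta_i}$ (not merely the generators $X_{\pm i}$) must vanish, which is what the formula \eqref{braidingR} actually requires---but the underlying mechanism is identical.
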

\begin{proof}
Recall that the braiding in $\C$ is given by $\tau \circ \mathscr{H}\mathscr{R}$ where $\tau$ is the flip map and $\mathscr{H}$ and $\mathscr{R}$ are defined by Equation \ref{braidingH} and \ref{braidingR}. It follows immediately from equation \eqref{braidingR} that action of the braiding coincides with that of $\tau \circ \mathscr{H}$ on $X \otimes \mbbC_{\lambda}$ and $\mbbC_{\lambda} \otimes X \in \C$ since for all $i$, $X_{\pm i}$ acts as zero on $\mbbC_{\lambda}$.
\end{proof}
\begin{theorem}\label{induction}${}$
 \begin{itemize}
\item $\mcF(P^{\lambda}) \in \mathrm{Rep}^0(\mcA_{L^{\mu}})$ if and only if $\lambda \in \frac{\ell}{2}(L^{\mu})^*$.
\item Let $X \in \C$ and let $P_X \in C$ be the projective cover of $X$. Then $\mcF(X) \in \mathrm{Rep}^0(\mcA_{L^{\mu}})$ if and only if $\mcF(P_X) \in \mathrm{Rep}^0\mcA_{L^{\mu}}$.
\item $\mcF(P^{\lambda})$ is the projective cover of $\mcF(\irred{\lambda})$ in $\mathrm{Rep}^0\mcA_{L^{\mu}}$.
\item The distinct irreducible objects in $\mathrm{Rep}^0\mcA_{L^{\mu}}$ are given by the set $\{ \mcF(\irred{\lambda}) \, | \, \lambda \in \Lambda(L^{\mu})\}$ where $\Lambda(L^{\mu}):=\frac{\ell}{2}(L^{\mu})^*/\frac{\ell}{2}(L^{\mu})^* \cap L^{\mu}$. $\mathrm{Rep}^0\mcA_{L^{\mu}}$ is finite if and only if $\mathrm{rank}(L^{\mu})=\mathrm{rank}(P)$.
\end{itemize}
\end{theorem}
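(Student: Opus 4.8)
My plan is to extract a single locality criterion for induced modules and feed it into all four bullets. For any $X \in \C$, the induced module $\mcF(X)$ lies in $\mathrm{Rep}^0\mcA_{L^{\mu}}$ if and only if the double braiding (monodromy) $c_{X,\mcA_{L^{\mu}}} \circ c_{\mcA_{L^{\mu}},X}$ equals the identity. By Lemma \ref{ssbraid} the braiding with each summand $\mbbC_{\gamma}$ is $\tau \circ \mathscr{H}$, so on $\mbbC_{\gamma} \otimes v_{\nu}$ (with $v_{\nu}$ a weight vector of weight $\nu$) the monodromy acts by the scalar $q^{2\langle \gamma, \nu \rangle}$ via \eqref{weight}. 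Since $q$ is a primitive $\ell$-th root of unity, $\mcF(X)$ is local precisely when $\langle \gamma, \nu \rangle \in \frac{\ell}{2}\mbbZ$ for every $\gamma \in L^{\mu}$ and every $\nu \in \Gamma(X)$, that is, when $\Gamma(X) \subset \frac{\ell}{2}(L^{\mu})^*$. The first thing I would record is that $L^{\mu} \subset \mcL$ forces $Q \subset \frac{\ell}{2}(L^{\mu})^*$: indeed $\langle \gamma, \alpha_j \rangle = d_j\gamma(H_j) \in \frac{\ell}{2}\mbbZ$ for $\gamma \in \mcL$ by Definition \ref{weightmod}. Hence membership in $\frac{\ell}{2}(L^{\mu})^*$ depends only on the $Q$-coset of a weight.

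For the first bullet, the weights of $P^{\lambda}$ all lie in $\lambda + Q$, so the criterion together with $Q \subset \frac{\ell}{2}(L^{\mu})^*$ gives that $\mcF(P^{\lambda})$ is local iff $\lambda \in \frac{\ell}{2}(L^{\mu})^*$. For the second bullet I would use that $X_{\pm i}$ shift weights by $\pm\alpha_i \in Q$ while $H_i, K_{\gamma}$ preserve them, so every indecomposable weight module is supported in a single $Q$-coset; decomposing $X$ into indecomposables reduces to that case. The projective cover $P_X$ surjects onto $X$ and lives in the same $Q$-coset, so by the previous paragraph $\Gamma(X) \subset \frac{\ell}{2}(L^{\mu})^*$ iff $\Gamma(P_X) \subset \frac{\ell}{2}(L^{\mu})^*$, which is exactly the claim.

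The third bullet is where I expect the real work. Since $\mcF$ is left adjoint (Frobenius reciprocity \eqref{reciprocity}) to the exact functor $\mcG$, it preserves projectives, so $\mcF(P^{\lambda})$ is projective in $\mathrm{Rep}\,\mcA_{L^{\mu}}$; as $\mathrm{Rep}^0\mcA_{L^{\mu}}$ is a full subcategory closed under subquotients, an object that is projective in $\mathrm{Rep}\,\mcA_{L^{\mu}}$ and local is projective in $\mathrm{Rep}^0\mcA_{L^{\mu}}$. It then remains to show $\mcF(P^{\lambda})$ has simple top $\mcF(\irred{\lambda})$, for a projective with simple top is the projective cover of that top. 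Using $\mcG\mcF(\irred{\nu}) \cong \bigoplus_{\gamma \in L^{\mu}} \irred{\nu+\gamma}$ and $\dim\Hom_{\C}(P^{\lambda},\irred{\mu}) = \delta_{\lambda,\mu}$, Frobenius reciprocity computes $\dim\Hom(\mcF(P^{\lambda}),\mcF(\irred{\nu}))$ as the number of $\gamma \in L^{\mu}$ with $\nu+\gamma = \lambda$; this is nonzero only when $\nu \in \lambda + L^{\mu}$, in which case $\mcF(\irred{\nu}) \cong \mcF(\irred{\lambda})$ and the dimension is $1$. Thus every simple quotient of $\mcF(P^{\lambda})$ is $\mcF(\irred{\lambda})$, occurring once, which gives the simple top. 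The isomorphism I lean on, $\mcF(\irred{\lambda+\gamma}) \cong \mcF(\irred{\lambda})$ for $\gamma \in L^{\mu}$, follows from $\mcA_{L^{\mu}} \otimes \mbbC_{\gamma} \cong \mcA_{L^{\mu}}$ (reindexing the defining sum).

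For the fourth bullet, Proposition \ref{simpleinduction} says every simple of $\mathrm{Rep}^0\mcA_{L^{\mu}}$ is $\mcF(\irred{\nu})$; the locality criterion restricts $\nu$ to $\frac{\ell}{2}(L^{\mu})^*$, and comparing restrictions shows $\mcF(\irred{\nu}) \cong \mcF(\irred{\nu'})$ iff $\nu-\nu' \in L^{\mu}$. The (super)commutativity conditions of Theorem \ref{algebraobject} and Proposition \ref{superalg} give $L^{\mu} \subset \frac{\ell}{2}(L^{\mu})^*$, so the isomorphism classes are indexed by $\Lambda(L^{\mu}) = \frac{\ell}{2}(L^{\mu})^*/\big(\frac{\ell}{2}(L^{\mu})^* \cap L^{\mu}\big)$. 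Finiteness reduces to a lattice count: $\frac{\ell}{2}(L^{\mu})^*$ is discrete in $\mfh^*$ exactly when $L^{\mu}$ spans $\mfh^*$, i.e.\ when $\mathrm{rank}(L^{\mu}) = \mathrm{rank}(P)$, and otherwise $(L^{\mu})^*$ contains the orthogonal complement of its span and $\Lambda(L^{\mu})$ is infinite. The main obstacle is the simple-top argument in the third bullet; a secondary point I would handle carefully is the super-braiding sign in the locality criterion when $\mcA_{L^{\mu}}$ is a genuine superalgebra, which only shifts the relevant scalars by the $\mbbZ_2$-grading and does not affect the conclusions.
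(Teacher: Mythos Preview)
Your proposal is correct, and for the first, second, and fourth bullets it is essentially identical to the paper's argument: extract the locality criterion $\Gamma(X)\subset\frac{\ell}{2}(L^{\mu})^*$ from Lemma~\ref{ssbraid}, observe $Q\subset\frac{\ell}{2}(L^{\mu})^*$ so only the $Q$-coset matters, and for the classification of simples combine Proposition~\ref{simpleinduction} with the observation that $\mcF(\irred{\nu})\cong\mcF(\irred{\nu'})$ iff $\nu-\nu'\in L^{\mu}$.

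The genuine difference is in the third bullet. The paper shows directly that the surjection $\mcF(f):\mcF(P^{\lambda})\to\mcF(\irred{\lambda})$ is \emph{essential}: restricting a hypothetical proper surjecting submodule along $\mcG$ and decomposing $\mcG\mcF(P^{\lambda})\cong\bigoplus_{\gamma}P^{\lambda+\gamma}$, one would contradict essentiality of some $P^{\lambda+\gamma}\twoheadrightarrow\irred{\lambda+\gamma}$ in $\C$. You instead compute $\Hom\big(\mcF(P^{\lambda}),\mcF(\irred{\nu})\big)$ via Frobenius reciprocity and $\dim\Hom_{\C}(P^{\lambda},\irred{\mu})=\delta_{\lambda,\mu}$, obtaining a one-dimensional space exactly when $\nu\in\lambda+L^{\mu}$; this forces a unique maximal subobject, hence a simple top, hence $\mcF(P^{\lambda})$ is the projective cover. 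Both arguments are valid. Yours is cleaner and more portable (it uses only the adjunction and the Hom-pattern of projective covers in $\C$), while the paper's approach is more concrete and avoids any appeal to the existence or uniqueness of maximal subobjects by reducing everything to the well-understood covers $P^{\lambda+\gamma}\to\irred{\lambda+\gamma}$. Your explicit verification that (super)commutativity gives $L^{\mu}\subset\frac{\ell}{2}(L^{\mu})^*$ is a nice touch the paper leaves implicit.
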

\begin{proof}
Recall from Definition \ref{local} that $\mcF(X) \in \mathrm{Rep}^0(\mcA_{L^{\mu}})$ if and only if $M_{\mbbC_{\lambda}, X}=c_{X,\mbbC_{\lambda}} \circ c_{\mbbC_{\lambda}, X} =\mathrm{Id}$ for all $\lambda \in L^{\mu}$. It follows from Lemma \ref{ssbraid} and equation \eqref{braidingH} that for any vector $w_{\gamma} \in X$ of weight $\gamma \in \mfh^*$ we have
\begin{equation}\label{eq1} M_{\mbbC_{\lambda},X}(v_{\lambda} \otimes w_{\gamma})=q^{2\langle \lambda, \gamma \rangle} \mathrm{Id}\end{equation}
We therefore have $X$ local if and only if $2\langle \lambda, \gamma \rangle \in \ell \mbbZ$ for all $\gamma \in \Gamma(X)$ and $\lambda \in L^{\mu}$, which holds if and only if $\gamma \in \frac{\ell}{2}(L^{\mu})^*$ for all $\gamma \in \Gamma(X)$. Recall, however, that $L^{\mu} \subset \mcL$, so it is easy to see that $\alpha \in \frac{\ell}{2} (L^{\mu})^*$ for all $\alpha \in Q$. For any cyclic indecomposable $X \in \C$, all of the weights of $X$ differ by an element of $Q$, so $\mcF(X)\in \mathrm{Rep}^0\mcA_{L^{\mu}}$ if and only if any one of its weights $\gamma \in \Gamma(X)$ satisfies equation \eqref{eq1}. In particular, $\mcF(P^{\lambda}) \in \mathrm{Rep}^0\mcA_{L^{\mu}}$ if and only if $\lambda \in \frac{\ell}{2} (L^{\mu})^*$ and $\mcF(X) \in \mathrm{Rep}^0\mcA_{L^{\mu}}$ if and only if $\mcF(P_X) \in \mathrm{Rep}^0\mcA_{L^{\mu}}$ as their weights differ by elements in $Q$.

Define $G:=\{ \gamma \in \mfh^*| \; 2\langle \lambda, \gamma \rangle \in \ell \mbbZ \quad \forall \lambda \in L^{\mu} \}=\frac{\ell}{2}(L^{\mu})^*$, so $\mcF(P^{\gamma}) \in \mathrm{Rep}^0(\mcA_{L^{\mu}})$ if and only if $\gamma \in G$. It is easy to see that two irreducible modules  $\irred{\gamma_1},\irred{\gamma_2}$ induce to the same module if and only if $\gamma_1 - \gamma_2 \in L^{\mu}$. In particular, the set of distinct $\mcF(L^{\gamma})$ is in bijective correspondence with the quotient of free abelian groups $G/(G \cap L^{\mu})$, which is finite if and only if $\mathrm{rank}(G)=\mathrm{rank(G \cap L^{\mu})}$, which holds if and only if $L^{\mu}$ has full rank ($\mathrm{rank}(L^{\mu})=\mathrm{rank}(P)$).

It follows immediately from Frobenius reciprocity that $\mcF(P^{\lambda})\in \mathrm{Rep}^0\mcA_{L^{\mu}}$ is projective, and the surjection $f:P^{\lambda} \twoheadrightarrow \irred{\lambda}$ induces to a surjection $\mcF(f)=Id_{\mcA_L} \otimes f: \mcF(P^{\lambda}) \to \mcF(\irred{\lambda})$. If $\mcF(P^{\lambda})$ is not the projective cover of $\mcF(\irred{\lambda})$, then $\mcF(f)$ must not be essential. That is, there exists a proper submodule $A \subset \mcF(P^{\lambda})$ such that $\mcF(f)(A)=\mcF(\irred{\lambda})$. Note that 
\[ \mcF(P^{\lambda})\cong \bigoplus\limits_{\gamma \in L} P^{\lambda+\gamma} \qquad \qquad \mcF(\irred{\lambda})\cong \bigoplus\limits_{\gamma \in L} \irred{\lambda+\gamma} \]
and we must have $\mcF(f)|_{P^{\lambda+\gamma}} :P^{\lambda+\gamma} \to \irred{\lambda+\gamma}$. Therefore, if $f$ is not essential, there exists a $P^{\gamma}$ with a submodule $A'$ which surjects onto $\irred{\gamma}$, contradicting the fact that $P^{\gamma}$ is the projective cover of $\irred{\gamma}$.
\end{proof}

Recall that $r=\ell$ if $\ell$ is odd and $r=\ell/2$ if $\ell$ is even. We have the following:
\begin{lemma}\label{ext}
If $r \nmid 2d_i$ for all $i$, then $\mathrm{Ext}^1(\mbbC_{\lambda_1},\mbbC_{\lambda_2})=0$ for all $\lambda_1,\lambda_2 \in \mcL$.
\end{lemma}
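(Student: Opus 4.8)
The plan is to compute $\mathrm{Ext}^1(\mbbC_{\lambda_1},\mbbC_{\lambda_2})$ by classifying short exact sequences
\[ 0 \to \mbbC_{\lambda_2} \to E \to \mbbC_{\lambda_1} \to 0 \]
in $\mcC$ and showing that each one splits. Recall that every simple current $\mbbC_\lambda=\irred{\lambda}$ is one-dimensional: by Lemma \ref{ssbraid} the generators $X_{\pm i}$ act as zero on it, while $H_i$ acts by the scalar $\lambda(H_i)$. Hence the middle term $E$ is a two-dimensional weight module, the $H_i$ act semisimply on it, and its only possible weights are $\lambda_1$ and $\lambda_2$. I would therefore analyze $E$ entirely through its weight decomposition.

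First I would dispose of the diagonal case $\lambda_1=\lambda_2=:\lambda$: then $E$ is concentrated in the single weight $\lambda$, so $X_{\pm i}E\subseteq E(\lambda\pm\alpha_i)=0$ and $E\cong\mbbC_\lambda^{\oplus 2}$ splits. For $\lambda_1\neq\lambda_2$ the weight spaces $E(\lambda_1)$ and $E(\lambda_2)$ are each one-dimensional; I fix a weight vector $v_1\in E(\lambda_1)$ lifting the generator of the quotient and let $v_2$ span the submodule $\mbbC_{\lambda_2}$. Since $v_2$ generates a copy of $\mbbC_{\lambda_2}$ we have $X_{\pm i}v_2=0$, and since the image of $X_{\pm i}v_1$ in the quotient $\mbbC_{\lambda_1}$ vanishes, each $X_{\pm i}v_1$ lies in $\mbbC v_2$. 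As $X_{+i}v_1$ has weight $\lambda_1+\alpha_i$ and $X_{-i}v_1$ has weight $\lambda_1-\alpha_i$, such a vector can be a nonzero multiple of $v_2$ only if $\lambda_2-\lambda_1=\pm\alpha_i$. Thus the extension is non-split only if $\lambda_2-\lambda_1=\pm\alpha_i$ for some $i$; otherwise all $X_{\pm i}v_1=0$, the line $\mbbC v_1$ is a complementary submodule, and the sequence splits. Note that this step uses only the weight grading, so I never need to invoke the commutator relation \eqref{eqX}.

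The arithmetic heart of the argument is then to show that the hypothesis $r\nmid 2d_i$ forces $\alpha_i\notin\mcL$ for every $i$. Since $\mcL$ is a subgroup of $\mfh^*$ we always have $\lambda_2-\lambda_1\in\mcL$, and combined with $\pm\alpha_i\notin\mcL$ this makes $\lambda_2-\lambda_1=\pm\alpha_i$ impossible; hence every extension splits and $\mathrm{Ext}^1(\mbbC_{\lambda_1},\mbbC_{\lambda_2})=0$. To see $\alpha_i\notin\mcL$, I would recall that $\mcL=\{\lambda:\lambda(H_j)\in\tfrac{\ell}{2d_j}\mbbZ\ \forall j\}$ and test only the $j=i$ condition: $\alpha_i(H_i)=a_{ii}=2$, so $\alpha_i\in\mcL$ would require $2\in\tfrac{\ell}{2d_i}\mbbZ$, i.e.\ $\ell\mid 4d_i$. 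It therefore suffices to check that $r\nmid 2d_i$ implies $\ell\nmid 4d_i$, which I would do by splitting on the parity of $\ell$ using $r=\ell$ (for $\ell$ odd) and $r=\ell/2$ (for $\ell$ even): for even $\ell$ the condition $r\nmid 2d_i$ reads $\ell\nmid 4d_i$ directly, while for odd $\ell$ the coprimality of $\ell$ with $2$ and $4$ gives $\ell\nmid 2d_i\Leftrightarrow\ell\nmid d_i\Leftrightarrow\ell\nmid 4d_i$.

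The step I expect to require the most care is this final bookkeeping with $\ell$ versus $r$ and the order of the root of unity, since the definition $r=2\ell/(3+(-1)^\ell)$ behaves differently in the two parities and one must confirm that the single test weight $\alpha_i(H_i)=2$ already obstructs membership in $\mcL$, rather than needing the full family of conditions over all $j$. Everything else reduces to the routine two-dimensional module analysis described above.
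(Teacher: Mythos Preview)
Your argument is correct and takes a genuinely different route from the paper. Both proofs agree on the easy reductions: the diagonal case $\lambda_1=\lambda_2$ splits by semisimplicity of the $H_i$, and a non-split two-dimensional extension forces $\lambda_2-\lambda_1=\pm\alpha_i$ for some $i$. The divergence is in how this last case is excluded. The paper treats $\lambda_2=\lambda_1-\alpha_i$ by mapping the Verma module $M^{\lambda_1}$ onto $N$ and computing, via the commutator relation \eqref{eqX}, that $X_iX_{-i}^2v_{\lambda_1}=-[2]_iX_{-i}v_{\lambda_1}$; since $[2]_i\neq 0$ under $r\nmid 2d_i$, the vector $X_{-i}^2v_{\lambda_1}$ cannot vanish in the Verma module, contradicting the two-dimensionality of $N$. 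The opposite ordering is then handled by applying the contravariant duality $\check{(-)}$. You instead bypass the Verma module, the commutator relation, and the duality functor entirely by observing that $\mcL$ is a lattice and that the single condition $\alpha_i(H_i)=2\in\tfrac{\ell}{2d_i}\mbbZ$ already fails when $r\nmid 2d_i$, so $\pm\alpha_i\notin\mcL$ and the offending weight difference cannot occur. Your approach is more elementary and symmetric in $\lambda_1,\lambda_2$; the paper's approach, while heavier, has the minor advantage of exhibiting exactly where the quantum-group relations enter. The parity bookkeeping you flagged as the delicate step is handled correctly: for even $\ell$ the conditions $r\nmid 2d_i$ and $\ell\nmid 4d_i$ coincide, and for odd $\ell$ coprimality of $\ell$ with $4$ gives $\ell\mid 4d_i\Rightarrow\ell\mid d_i\Rightarrow\ell\mid 2d_i$.
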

\begin{proof}
Let $N \in \mathrm{Ext}^1(\mbbC_{\lambda_1},\mbbC_{\lambda_2})$, that is,
\begin{equation}\label{ses} 0 \to \mbbC_{\lambda_2} \to N \to \mbbC_{\lambda_1}\to 0. \end{equation}
If $\lambda_1=\lambda_2$, then the sequence splits since $X_{\pm i}$ acts as zero for all $i$ as $\mathrm{ch}[N]=2z^{\lambda}$ and the $H_i$ act semisimply. Suppose now that $\lambda_1 > \lambda_2$. If the sequence does not split, we must have $\lambda_1=\lambda_2-\alpha_i$ for some $i$ (otherwise, $X_{\pm i}v_{\lambda_1}=0$ for all i and the sequence splits as above). Any vector $n_{\lambda_1} \in N(\lambda_1)$ is highest weight and generates $N$. We therefore have a quotient map $\phi:M^{\lambda_1} \twoheadrightarrow N$ $v_{\lambda_1} \mapsto n_{\lambda_1}$, from the Verma module $M^{\lambda_1}$ of highest weight $\lambda_1$ onto $N$. However, $\lambda_1 \in \mcL$, so it follows from equation \eqref{eqX} that we have
\begin{align*} X_iX_{-i}^2v_{\lambda_1}&=[\lambda_1(H_i)-2]_iv_{\lambda_1}=-[2]_iX_{-i}v_{\lambda_1}.
\end{align*}
$[2]_i=0$ if and only if $r|2d_i$. We then see that $X_{-i}^2v_{\lambda_1}\not = 0$, a contradiction, so the module $N$ cannot exist if $r\nmid 2d_i$ for all $i$. If $\lambda_1 < \lambda_2$, then the above argument tells us that the dualized sequence
\[ 0 \to \mbbC_{\lambda_1} \to \check{N} \to \mbbC_{\lambda_2} \to 0\]
splits, where $\check{(-)}$ is the exact contravariant functor defined in \cite[Subsection 4.2]{R}, hence the original sequence splits. 
\end{proof}

Let $L^{\mu} \subset \mcL$ such that $\mcA_{L^{\mu}}$ is a supercommutative superalgebra object. Recall from Equation \eqref{twist}, that the twist acts on $\mathbb{C}_{\gamma}$ as
\begin{align*}
\theta_{\mbbC_{\gamma}}&=q^{\langle \gamma , \gamma+2(1-r)\rho\rangle} \text{Id}
 =q^{\langle \gamma, \gamma \rangle + 2(1-r)\langle \gamma, \rho\rangle}\text{Id}.
\end{align*}

It then follows from \cite[Proposition 2.86]{CKM} and Proposition \ref{superalg} that $\mathrm{Rep}^0\mcA_{L^{\mu}}$ is ribbon if $2(1-r)\langle \lambda, \rho \rangle \in \ell \mbbZ$ for all $\lambda \in L$ and $2(1-r)\langle \mu,\rho \rangle \in \frac{\ell}{2}\mbbZ$. This gives us the first statement of the following proposition, where we recall that $\Lambda(L^{\mu}):=\frac{\ell}{2}(L^{\mu})^*/\frac{\ell}{2}(L^{\mu})^* \cap L^{\mu}$.
\begin{proposition}\label{rib}
Let $\mcA_{L^{\mu}}$ be a supercommutative superalgebra. Then
\begin{itemize}
\item $\mathrm{Rep}^0\mcA_{L^{\mu}}$ is ribbon if $2(1-r)\langle \lambda, \rho \rangle \in \ell \mathbb{Z}$ for all $\lambda \in L$ and $2(1-r)\langle \mu, \rho \rangle \in \frac{\ell}{2}\mbbZ$.
\item If $r\nmid 2d_i$ for all $i$, then $\mathrm{Rep}^0\mcA_{L^{\mu}}$ has non-trivial M\"{u}ger center if and only if there exists a $\lambda\in \Lambda(L^{\mu})$ such that $\langle \lambda, \gamma \rangle \in \frac{\ell}{2} \mbbZ$ for all $\gamma \in \Lambda(L^{\mu})$.
\end{itemize}
\end{proposition}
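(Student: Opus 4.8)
The first bullet is already established in the discussion preceding the proposition, so I will focus on the second. The plan is to reduce the determination of the Müger center to a monodromy computation on the simple objects of $\mathrm{Rep}^0\mcA_{L^{\mu}}$, which by Theorem \ref{induction} are exactly the induced modules $\mcF(\irred{\gamma})$ with $\gamma \in \Lambda(L^{\mu})$, the unit being $\mcF(\irred{0}) = \mcA_{L^{\mu}}$. The hypothesis $r \nmid 2d_i$ enters through Lemma \ref{ext}: it guarantees that there are no nonsplit extensions among the one-dimensional simple currents, which I will use to argue that the transparent part of the category is semisimple, so that the Müger center is nontrivial precisely when it contains a simple object not isomorphic to the unit. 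It then suffices to decide, for each $\lambda \in \Lambda(L^{\mu})$, whether $\mcF(\irred{\lambda})$ is transparent, i.e. whether the double braiding $M_{\mcF(\irred{\lambda}),Y} = c_{Y,\mcF(\irred{\lambda})}\circ c_{\mcF(\irred{\lambda}),Y}$ equals the identity for every $Y \in \mathrm{Rep}^0\mcA_{L^{\mu}}$.

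For the forward implication I would evaluate the induced double braiding on the highest weight vectors of the underlying modules. Writing the braiding as $\tau\circ\mathscr{H}\mathscr{R}$ and using that each term of $\mathscr{R}$ from Equation \eqref{braidingR} carries a raising operator $X_{\beta}$ in its left tensor factor, I note that $\mathscr{R}$ acts as the identity on $v_{\lambda}\otimes v_{\gamma}$ whenever $v_\lambda$ is a highest weight vector, since $X_{\beta}v_{\lambda}=0$. Hence, regardless of whether $\irred{\lambda}$ and $\irred{\gamma}$ are typical, Equation \eqref{weight} yields $M_{\irred{\lambda},\irred{\gamma}}(v_{\lambda}\otimes v_{\gamma}) = q^{2\langle \lambda,\gamma\rangle}\,v_{\lambda}\otimes v_{\gamma}$ on the top weight vectors. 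Transparency of $\mcF(\irred{\lambda})$ therefore forces $q^{2\langle\lambda,\gamma\rangle}=1$, that is $\langle\lambda,\gamma\rangle\in\frac{\ell}{2}\mbbZ$, for all $\gamma\in\Lambda(L^{\mu})$, which is the asserted condition. A key point I would then record is that this condition already forces $\lambda\in\mcL$: each simple root $\alpha_i$ lies in $\frac{\ell}{2}(L^{\mu})^*$ (as established in the proof of Theorem \ref{induction}) and hence represents a class, possibly zero, in $\Lambda(L^{\mu})$, so the condition together with the locality constraint $\lambda\in\frac{\ell}{2}(L^{\mu})^*$ yields $\langle\lambda,\alpha_i\rangle\in\frac{\ell}{2}\mbbZ$, whence $\lambda(H_i)\in\frac{\ell}{2d_i}\mbbZ$ for all $i$. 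Thus $\irred{\lambda}=\mbbC_{\lambda}$ is in fact a simple current.

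For the converse I would exploit that $\lambda\in\mcL$ makes Lemma \ref{ssbraid} applicable: the braiding on $X\otimes\mbbC_{\lambda}$ and $\mbbC_{\lambda}\otimes X$ equals $\tau\circ\mathscr{H}$ for every $X\in\mcC$, so the double braiding of $\mcF(\mbbC_{\lambda})$ with any object acts on a weight vector of weight $w$ by the scalar $q^{2\langle\lambda,w\rangle}$. It then remains to check that $q^{2\langle\lambda,w\rangle}=1$ for every weight $w$ occurring in an object of $\mathrm{Rep}^0\mcA_{L^{\mu}}$. The weights of the simple $\mcF(\irred{\gamma})=\bigoplus_{\nu\in L^{\mu}}\mbbC_{\nu}\otimes\irred{\gamma}$ all lie in $\gamma+Q+L^{\mu}$, and every object of $\mathrm{Rep}^0\mcA_{L^{\mu}}$ has its weights in such cosets; splitting $w=\gamma+\alpha+\nu$ I would verify $2\langle\lambda,\cdot\rangle\in\ell\mbbZ$ on each piece: on $\gamma$ by the condition, on $\alpha\in Q$ because $\lambda\in\mcL$, and on $\nu\in L^{\mu}$ by locality $\lambda\in\frac{\ell}{2}(L^{\mu})^*$. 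This makes $\mcF(\mbbC_{\lambda})$ transparent.

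Finally I would assemble the equivalence. A transparent simple $\mcF(\mbbC_{\lambda})$ is isomorphic to the unit exactly when $\lambda\equiv 0$ in $\Lambda(L^{\mu})$ (Theorem \ref{induction}), so the content of the statement is the existence of a \emph{nonzero} class $\lambda\in\Lambda(L^{\mu})$ with $\langle\lambda,\gamma\rangle\in\frac{\ell}{2}\mbbZ$ for all $\gamma\in\Lambda(L^{\mu})$; such a class yields a nontrivial object of the Müger center, and conversely, by the reduction above, a nontrivial Müger center must contain one. I expect the main obstacle to be precisely this reduction: justifying that it is enough to test transparency on simple objects and that no further transparent objects arise from the non-semisimple structure. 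This is where Lemma \ref{ext} is essential, since it removes the nonsplit extensions among simple currents that could otherwise enlarge the Müger center beyond the semisimple span of the transparent simples. Once the highest-weight-vector computation and Lemma \ref{ssbraid} are in place, the monodromy bookkeeping itself is routine.
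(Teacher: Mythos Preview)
Your approach is essentially the same as the paper's: reduce to transparency of simples via composition factors and the $\mathrm{Ext}^1$-vanishing coming from Lemma \ref{ext}, then characterize transparent simples by the monodromy condition $q^{2\langle\lambda,\gamma\rangle}=1$. The paper simply asserts that the double braiding of $\mcF(\irred{\lambda})$ with $\mcF(\irred{\lambda'})$ equals $q^{2\langle\lambda,\lambda'\rangle}\mathrm{Id}$ on the nose (citing that the braiding on induced objects descends from $\mcC^{\oplus}$), whereas you are more careful: you extract the necessary condition from the highest-weight vectors only, and for sufficiency you first deduce $\lambda\in\mcL$ from $\alpha_i\in\frac{\ell}{2}(L^{\mu})^*$ so that Lemma \ref{ssbraid} applies and the monodromy really is the scalar $q^{2\langle\lambda,w\rangle}$ on every weight vector. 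That intermediate step is a genuine clarification of the paper's argument.

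The one place you leave underspecified is the bridge from Lemma \ref{ext}, which is a statement about $\mathrm{Ext}^1$ in $\mcC$, to the vanishing of $\mathrm{Ext}^1_{\mathrm{Rep}^0\mcA_{L^{\mu}}}(\mcA_{L^{\mu}},\mcA_{L^{\mu}})$. The paper does this explicitly: given a short exact sequence $0\to\mcA_{L^{\mu}}\to N\to\mcA_{L^{\mu}}\to 0$ in $\mathrm{Rep}^0\mcA_{L^{\mu}}$, apply the restriction functor $\mcG$, split the resulting sequence in $\mcC^{\oplus}$ summand-by-summand using Lemma \ref{ext}, conclude $\dim\mathrm{Hom}_{\mcC}(\mbbC_0,\mcG(N))\geq 2$, and then invoke Frobenius reciprocity \eqref{reciprocity} to get $\dim\mathrm{Hom}_{\mathrm{Rep}\,\mcA_{L^{\mu}}}(\mcA_{L^{\mu}},N)\geq 2$, forcing the original sequence to split. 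With that Frobenius-reciprocity step filled in, your argument is complete and matches the paper's.
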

\begin{proof}
As the braiding on induced modules (hence all simple modules) in $\mathrm{Rep}^0\mcA_{L^{\mu}}$ descends from that of $\mcC^{\oplus}$ \cite[Theorem 2.67]{CKM} (i.e. acts by the same scalar), an irreducible module $\mcF(\irred{\lambda})\in \mathrm{Rep}^0\mcA_{L^{\mu}}$ is transparent to all other irreducible modules $\mcF(\irred{\lambda'}) \in \mathrm{Rep}^0\mcA_{L^{\mu}}$ if and only if it satisfies 
\[ c_{\mcF(\irred{\lambda}),\mcF(\irred{\lambda'})} \circ c_{\mcF(\irred{\lambda'}), \mcF(\irred{\lambda})}=q^{2\langle \lambda, \lambda' \rangle} \mathrm{Id}_{\mcF(\irred{\lambda}) \otimes \mcF(\irred{\lambda'})}=\mathrm{Id}_{\mcF(\irred{\lambda}) \otimes \mcF(\irred{\lambda'})} \]
for all $\mcF(\irred{\lambda'}) \in \mathrm{Rep}^0\mcA_{L^{\mu}}$. Recall from Theorem \ref{induction} that irreducibles in $\mathrm{Rep}^0\mcA_{L^{\mu}}$ are given by the set $\{ \mcF(\irred{\lambda}) \, | \, \lambda \in \Lambda(L^{\mu})\}$ where $\Lambda(L^{\mu})=\frac{\ell}{2}(L^{\mu})^*/\frac{\ell}{2}(L^{\mu})^* \cap L^{\mu}$. Hence, $\mcF(\irred{\lambda})$ is transparent if and only if $\langle \lambda, \lambda' \rangle \in \frac{\ell}{2} \mathbb{Z}$ for all $\lambda' \in \Lambda(L^{\mu})$. It follows that if there is no such $\lambda \in \Lambda(L^{\mu})$, then the only transparent irreducible object is the unit object $\mcA_{L^{\mu}}$. If an indecomposable module $X \in \mathrm{Rep}^0\mcA_{L^{\mu}}$ is transparent, then all of the irreducible factors in its composition series are. Therefore, all transparent objects in $\mathrm{Rep}^0 \mcA_{L^{\mu}}$ are extensions of $\mcA_{L^{\mu}}$. Suppose now that we have an object $N \in \mathrm{Ext}^1_{\mathrm{Rep}^0{\mcA_L}}(\mcA_{L^{\mu}},\mcA_{L^{\mu}})$, that is,
\[ 0 \to \mcA_{L^{\mu}} \to N \to \mcA_{L^{\mu}} \to 0.\]
This sequence splits if and only if $\mathrm{dim\ Hom}(\mcA_{L^{\mu}},N)=2$. The restriction functor $\mcG:\mathrm{Rep}\,\mcA_{L^{\mu}} \to \mcC$ then yields a corresponding sequence 
\[ 0 \to \bigoplus\limits_{\lambda \in L^{\mu}} \mbbC_{\lambda} \xrightarrow{\iota} \mcG(N) \xrightarrow{\pi} \bigoplus\limits_{\lambda \in L^{\mu}}\mbbC_{\lambda} \to 0 \]
in $\mcC^{\oplus}$. Let $\iota_{\gamma}:=\iota|_{\mbbC_{\gamma}}$ and $N^{\gamma}$ the indecomposable factor in $\mcG(N)$ containing $\mathrm{Im}(\iota_{\gamma})$. Then we have an exact sequence
\[ 0 \to \mbbC_{\gamma} \xrightarrow{\iota_{\gamma}} N^{\gamma} \xrightarrow{\pi|_{N^{\gamma}}} \bigoplus\limits_{\lambda \in T} \mbbC_{\lambda} \to 0\]
for some finite $T \subset \mcL$. It follows from Lemma \ref{ext} that this sequence splits for all $\gamma$, so $\mathrm{dim\ Hom}_{\mcC}(\mbbC_{\gamma}, \mcG(N)) \geq 2$. By Frobenius reciprocity, we have $\mathrm{dim\ Hom}(\mcA_{L^{\mu}}, X) \geq 2$, so the original sequence splits. It follows that all transparent objects in $\mathrm{Rep}^0\mcA_{L^{\mu}}$ are trivial (direct sums of $\mcA_{L^{\mu}}$).
\end{proof}

\subsection{Braided Tensor Categories for $W_Q(r)$}\label{subtriplet}
The triplet vertex operator algebra $W_Q(r)$ associated to the root lattice $Q$ of a finite dimensional complex simple Lie algebra $\mfg$ of ADE type can be written as an extension of the singlet as
\begin{equation*}
W_Q(r)=\bigoplus\limits_{\lambda \in \sqrt{r}Q} M_{\lambda} \in (\mathrm{Rep}_{\langle s \rangle} W^0_Q(r))^{\oplus}.
\end{equation*}
The $M_\lambda$ are conjecturally simple currents of the singlet algebra. 
The algebra object in $\mcC^{\oplus}$ corresponding to the triplet vertex operator algebra under the map $\phi:\mathrm{Rep}_{\langle s \rangle} W^0_Q(r) \to \mathrm{Rep}\overline{U}_q^H(\mfg)^{wt}$ of Equation \eqref{id} is
\[\mcA_{rQ}=\bigoplus\limits_{\lambda \in rQ} \mbbC_{\lambda}. \]
It follows from Theorem \ref{induction} that the irreducible modules in $\mathrm{Rep}^0\mcA_L$ are
\[ \{\mcF(\irred{\lambda}) \, | \, \lambda \in P/rQ \} \]
Recall that the order $|P/rQ|$ is equal to the determinant of the change of basis matrix $A_{rQ \leftarrow P}$, which is easily seen to be $rA$ where $A$ is the Cartan matrix of $\mfg$, so $\mathrm{Rep}^0\mcA_{rQ}$ has $\mathrm{Det}(A) \cdot r^{\mathrm{rank}(\mfg)}$ distinct irreducible objects. Further, we have $\langle \rho, \alpha \rangle \in \mbbZ$ for all $\alpha \in Q$, so it follows from Proposition \ref{rib} that $\mathrm{Rep}^0\mcA_{rQ}$ is ribbon. Suppose $\lambda \in \mathrm{Rep}^0\mcA_L$ is transparent, i.e. $\langle \lambda, \gamma \rangle \in r\mbbZ$ for all $\gamma \in P/rQ$. It follows that $\langle \lambda, \omega_k \rangle \in r\mbbZ$ for all fundamental weights $\omega_k \in P$, so $\lambda \in Q/rQ$ and we can write $\lambda=\sum\limits_{k=1}^n m_i \alpha_i$ where $\sum\limits_{k=1}^n m_i\in r\mbbZ$ and $0 \leq m_i <r$. Then, if $\langle \lambda, \rho \rangle \in r\mbbZ$, choose $j$ such that $n_j \not = 0$, then $\langle \lambda, \rho+\omega_j \rangle = \langle \lambda, \rho \rangle +m_j \not \in r\mbbZ$. Hence, $\mathrm{Rep}^0\mcA_{rQ}$ has trivial M\"{u}ger center by Proposition \ref{rib}.

\begin{proposition}\label{tripletao}
$\mathrm{Rep}^0\mcA_{rQ}$ is a finite non-degenerate ribbon category (i.e. Log-Modular) with $\mathrm{det}(A)\cdot r^{\mathrm{rank}(\mfg)}$ distinct irreducible modules, where $A$ is the Cartan matrix of $\mfg$.
\end{proposition}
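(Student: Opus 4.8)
The statement bundles four assertions about $\mathrm{Rep}^0\mcA_{rQ}$: finiteness, the exact count of simple objects, the ribbon property, and non-degeneracy. The plan is to reduce each to the general results already in hand, after first confirming that $\mcA_{rQ}$ is genuinely a commutative algebra object so that the $\mu=0$ specializations of Theorem \ref{induction} and Proposition \ref{rib} apply. Since $\mfg$ is of ADE type we have $d_i=1$, the root lattice $Q$ is even with $\langle\alpha_i,\alpha_j\rangle=a_{ij}$, and we are at the $2r$-th root of unity relevant to $W_Q(r)$, so $\ell=2r$. The commutativity check via Theorem \ref{algebraobject} is then routine: for $\lambda=r\alpha,\mu=r\beta\in rQ$ one has $\langle\lambda,\lambda\rangle=r^2\langle\alpha,\alpha\rangle\in 2r^2\mbbZ\subset\ell\mbbZ$ and $2\langle\lambda,\mu\rangle=2r^2\langle\alpha,\beta\rangle\in\ell\mbbZ$.

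First I would pin down the simple objects and finiteness. By Theorem \ref{induction} the simples are $\mcF(\irred{\lambda})$ indexed by $\Lambda(rQ)=\tfrac{\ell}{2}(rQ)^*/\bigl(\tfrac{\ell}{2}(rQ)^*\cap rQ\bigr)$. The one lattice computation I need is that in ADE type the fundamental weights $\{\omega_k\}$ and simple roots $\{\alpha_k\}$ are dual bases for $\langle-,-\rangle$ (since $\langle\omega_i,\alpha_j\rangle=\delta_{ij}$), whence $Q^*=P$ and $(rQ)^*=\tfrac1r P$; together with $\tfrac{\ell}{2}=r$ this gives $\tfrac{\ell}{2}(rQ)^*=P$ and therefore $\Lambda(rQ)=P/rQ$. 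Finiteness is then immediate from the last bullet of Theorem \ref{induction} because $\mathrm{rank}(rQ)=\mathrm{rank}(\mfg)=\mathrm{rank}(P)$, and the count follows from $|P/rQ|=|P/Q|\cdot|Q/rQ|=\det(A)\cdot r^{\mathrm{rank}(\mfg)}$, using the standard index $|P/Q|=\det(A)$.

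For the ribbon property I would invoke the first bullet of Proposition \ref{rib}: with $\mu=0$ its second hypothesis is vacuous, so it suffices to verify $2(1-r)\langle\lambda,\rho\rangle\in\ell\mbbZ$ for all $\lambda\in rQ$. As $\langle\rho,\alpha_i\rangle=d_i=1$ we have $\langle\rho,\alpha\rangle\in\mbbZ$ for every $\alpha\in Q$, and for $\lambda=r\alpha$ this yields $2(1-r)r\langle\alpha,\rho\rangle\in 2r\mbbZ=\ell\mbbZ$, as needed.

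The main work is non-degeneracy, which I would extract from the second bullet of Proposition \ref{rib}. Under the numerical hypothesis $r\nmid 2d_i$ (for ADE this reads $r\nmid 2$), the Müger center is non-trivial iff some class $\lambda\in\Lambda(rQ)=P/rQ$ pairs into $\tfrac{\ell}{2}\mbbZ=r\mbbZ$ with every class of $P/rQ$. Testing against the generators $\omega_k$ forces $\langle\lambda,\omega_k\rangle\in r\mbbZ$ for all $k$; since $\{\omega_k\}$ is $\langle-,-\rangle$-dual to $\{\alpha_k\}$ this says exactly $\tfrac1r\lambda\in P^{*}=Q$, i.e. $\lambda\in rQ$, i.e. $\lambda=0$ in $P/rQ$. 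Hence the unit is the only transparent simple object, the Müger center is trivial, and $\mathrm{Rep}^0\mcA_{rQ}$ is non-degenerate, so the three properties together give Log-Modularity. The step I expect to require the most care is precisely this last one: ensuring that the hypothesis $r\nmid 2d_i$ of Proposition \ref{rib} (which feeds Lemma \ref{ext} and rules out transparent non-split self-extensions of the unit) genuinely holds in the range of $r$ under consideration, and deciding how to treat the boundary value $r=2$, where $r\mid 2d_i$ and that hypothesis fails.
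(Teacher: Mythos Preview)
Your argument is correct and matches the paper's approach: invoke Theorem \ref{induction} for $\Lambda(rQ)=P/rQ$ and the count $|P/rQ|=\det(A)\cdot r^{\mathrm{rank}(\mfg)}$, use Proposition \ref{rib} for the ribbon structure via $\langle\rho,Q\rangle\subset\mbbZ$, and test transparency against the fundamental weights $\omega_k$ for non-degeneracy. Your transparency step is in fact cleaner than the paper's---from $\langle\lambda,\omega_k\rangle\in r\mbbZ$ you conclude $\lambda\in rP^{*}=rQ$ in one move, whereas the paper only records $\lambda\in Q$ and then runs a second argument with $\rho+\omega_j$ to exclude nonzero classes---and your closing caution about the hypothesis $r\nmid 2d_i$ failing at $r=2$ is well placed: the paper does not address this boundary case either.
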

We expect that $\mathrm{Rep}^0\mcA_{rQ}$ and the category $\mathrm{Rep}_{\langle s \rangle} W_Q(r)$ generated by irreducible $W_Q(r)$-modules are ribbon equivalent.

\subsection{Braided Tensor Categories for $B_Q(r)$}\label{subbp}

Let $\mathsf{H}$ denote the Heisenberg vertex operator algebra of Subsection \ref{Heis} whose rank coincides with that of $\mfg$, and $\mathcal{H}$ the category of modules on which the zero-mode $L_0$ of the Virasoro element acts semi-simply. This category is semisimple with irreducible modules $\mathsf{F}_{\lambda}$, $\lambda \in \mathfrak{h}^*$ and fusion rules
\[ \mathsf{F}_{\lambda} \otimes \mathsf{F}_{\mu} \cong \mathsf{F}_{\lambda+\mu}. \] 
We see then every object $\mathsf{F}_{\lambda} \in \mathcal{H}$ is a simple current with tensor inverse $\mathsf{F}_{-\lambda}$. Hence, any object $\mbbC_{\lambda} \boxtimes \mathsf{F}_{\gamma} \in \mcC \boxtimes \mcH$ in the Deligne product of $\mcC$ $\mathcal{H}$ with $\gamma \in \mathfrak{h}^*$ and $\lambda \in \mcL= \{ \lambda \in \mfh^* \, | \,  \lambda(H_i) \in \frac{\ell}{2d_i}\mbbZ\}$ is a simple current. We can therefore define a family $\mfB^a_L$ of objects in $(\mcC \boxtimes \mcH)^{\oplus}$ where $a \in \mbbC$ as:
\[ \mfB^a_L:= \bigoplus\limits_{\lambda \in L}  \mbbC_{\lambda} \boxtimes \mathsf{F}_{a \lambda} \]
It follows from Lemma \ref{ssbraid} and Equation \eqref{Hbraid} that the braiding in $ \mcC \boxtimes \mcH$ acts on simple currents as $c_{\mbbC_{\lambda}\boxtimes \mathsf{F}_{a\lambda} , \mbbC_{\mu}\boxtimes  \mathsf{F}_{a\mu} }=q^{(1+pa^2) \langle \lambda, \mu \rangle}Id$. We then have the following corollary of Theorem \ref{algebraobject}:

\begin{corollary}\label{Bpalg}
$\mfB^a_L \in  (\mcC \boxtimes \mcH)^{\oplus}$ is a commutative algebra object if and only if
\begin{align*}
(1+pa^2)\langle \lambda,\lambda' \rangle & \in p\mbbZ,\\
(1+pa^2) \langle \lambda, \lambda \rangle & \in 2p \mbbZ,
\end{align*}
for all $\lambda, \lambda' \in L$.
\end{corollary}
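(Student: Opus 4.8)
The plan is to recognize $\mfB^a_L$ as an instance of the algebra objects $\mcA_L$ of Theorem \ref{algebraobject}, applied to the braided subcategory of $\mcC\boxtimes\mcH$ generated by the simple currents $\mbbC_\lambda\boxtimes\mathsf{F}_{a\lambda}$, $\lambda\in\mcL$. First I would verify that these objects meet the hypotheses of the framework preceding Theorem \ref{algebraobject}. Since $\mbbC_\lambda\otimes\mbbC_\mu\cong\mbbC_{\lambda+\mu}$ in $\mcC$ and $\mathsf{F}_{\gamma_1}\otimes\mathsf{F}_{\gamma_2}\cong\mathsf{F}_{\gamma_1+\gamma_2}$ in $\mcH$, the componentwise tensor product on the Deligne product gives $(\mbbC_\lambda\boxtimes\mathsf{F}_{a\lambda})\otimes(\mbbC_\mu\boxtimes\mathsf{F}_{a\mu})\cong\mbbC_{\lambda+\mu}\boxtimes\mathsf{F}_{a(\lambda+\mu)}$, so $\lambda\mapsto\mbbC_\lambda\boxtimes\mathsf{F}_{a\lambda}$ is an injective homomorphism from $\mcL$ into the group of simple currents, with unit $\mbbC_0\boxtimes\mathsf{F}_0$. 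Moreover $\End(\mbbC_\lambda\boxtimes\mathsf{F}_{a\lambda})\cong\End(\mbbC_\lambda)\otimes\End(\mathsf{F}_{a\lambda})$ is one-dimensional because both factors are simple. Hence $\mfB^a_L=\bigoplus_{\lambda\in L}\mbbC_\lambda\boxtimes\mathsf{F}_{a\lambda}$ is exactly the object $\mcA_L$ built from this family of simple currents.

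The only departure from the setting of Theorem \ref{algebraobject} is the braiding bicharacter: by the computation recorded just before the statement (combining Lemma \ref{ssbraid} with Equation \eqref{Hbraid}), the braiding acts on these currents by the scalar $q^{(1+pa^2)\langle\lambda,\mu\rangle}$ (times the flip), so the role played by $q^{\langle\lambda,\mu\rangle}$ in Theorem \ref{algebraobject} is now played by the bicharacter attached to the rescaled form $\langle\lambda,\mu\rangle':=(1+pa^2)\langle\lambda,\mu\rangle$. I would therefore run the argument of Theorem \ref{algebraobject} with $\langle-,-\rangle$ replaced by $\langle-,-\rangle'$. Nothing in the associativity portion of that proof uses the root-of-unity hypothesis — it is formal in the structure constants $t_{\lambda,\mu}$ and the bicharacter — so $\mfB^a_L$ is an associative algebra object for every $L$, and the commutativity criterion specializes to $\langle\lambda,\lambda\rangle'\in\ell\mbbZ$ and $2\langle\lambda,\mu\rangle'\in\ell\mbbZ$ for all $\lambda,\mu\in L$.

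It then remains to unwind these two conditions. Here we are in the even-root-of-unity case forced by the Heisenberg braiding, namely $q^p=-1$ (this is what turns $e^{\pi i\langle a\lambda,a\mu\rangle}$ into $q^{pa^2\langle\lambda,\mu\rangle}$), so $\ell=2p$ and $q^x=1\iff x\in 2p\mbbZ$. Substituting $\langle-,-\rangle'=(1+pa^2)\langle-,-\rangle$ and $\ell=2p$, the condition $\langle\lambda,\lambda\rangle'\in\ell\mbbZ$ becomes $(1+pa^2)\langle\lambda,\lambda\rangle\in 2p\mbbZ$, while $2\langle\lambda,\mu\rangle'\in\ell\mbbZ$ becomes $2(1+pa^2)\langle\lambda,\mu\rangle\in 2p\mbbZ$, i.e.\ $(1+pa^2)\langle\lambda,\mu\rangle\in p\mbbZ$; these are precisely the two stated conditions. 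I expect no conceptual difficulty, since the reduction to Theorem \ref{algebraobject} is essentially a relabelling. The only thing to handle carefully is the factor-of-two bookkeeping that distinguishes the self-pairing condition ($\in 2p\mbbZ$) from the mixed-pairing condition ($\in p\mbbZ$), together with the correct reading of $q^x=1$ in the even case $\ell=2p$; once the substitution and the value of $\ell$ are tracked consistently, the corollary follows with no new computation.
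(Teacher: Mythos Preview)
Your proposal is correct and follows exactly the paper's approach: the corollary is stated in the paper as an immediate consequence of Theorem \ref{algebraobject} once the braiding scalar $q^{(1+pa^2)\langle\lambda,\mu\rangle}$ on the simple currents $\mbbC_\lambda\boxtimes\mathsf{F}_{a\lambda}$ has been computed. Your added verification of the framework hypotheses and the explicit $\ell=2p$ bookkeeping are useful expansions, but the underlying argument is the same.
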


The higher rank $B_Q(r)$ vertex operator algebras of ADE type can be realized as (see \cite[Section 3]{C1})
\begin{equation}
B_Q(r)=\bigoplus\limits_{\lambda \in P} M_{\sqrt{r}\lambda} \otimes \mathsf{F}_{\sqrt{-r}\lambda} \in \mathrm{Rep}_{\langle s \rangle}\mcW^0(r)_{\mfg} \otimes \mcH
\end{equation}
where $M_{\sqrt{r}\lambda}$ is a simple current for $W^0_Q(r)$, which corresponds to $\mbbC_{\lambda}$ under the correspondence in equation \eqref{id}. $B_Q(r)$ therefore corresponds to the object
\[ \mfB^{a_r}_{rP}:= \bigoplus\limits_{\lambda \in P}  \mbbC_{r\lambda} \boxtimes \mathsf{F}_{\sqrt{-r}\lambda} =\bigoplus\limits_{\lambda \in rP}  \mbbC_{\lambda} \boxtimes \mathsf{F}_{a_r\lambda}\]
where $a_r=\sqrt{-1/r}$, which clearly satisfies the conditions of Corollary \ref{Bpalg} since $1+ra_r^2=0$. It follows from Equations \eqref{twist} and \eqref{Htwist} that the twist acts on $\mathbb{C}_{\lambda} \boxtimes \mathsf{F}_{a_r \lambda}$ as
\begin{align*} \theta_{\mathbb{C}_{\lambda} \boxtimes \mathsf{F}_{a_r \lambda}}=\theta_{\mbbC_{\lambda}} \boxtimes \theta_{\mathsf{F}_{a_r \lambda}}=q^{\langle \lambda, \lambda+2(1-r)\rho \rangle - \langle \lambda, \lambda \rangle}=q^{2(1-r)\langle \lambda, \rho \rangle}.\\
\end{align*}
Hence, by \cite[Proposition 2.86]{CKM} $\mathrm{Rep}^0\mfB^{a_r}_{rP}$ is ribbon if and only if $(1-r)\langle \lambda, \rho \rangle \in  \mbbZ$ for all $\lambda \in P$. $\rho \in \frac{1}{2}Q$, so this holds if $r$ is odd, or $\rho \in Q$. The irreducible modules in $\mathrm{Rep}^0\mfB_{rP}^{a_r}$ are $\{ \mcF(\irred{\mu} \boxtimes \mathsf{F}_{\gamma}) \, | \, \langle \lambda, \mu+ra_r\gamma \rangle \in \mbbZ \; \text{for all $\lambda \in P$}\}$ with relations $\mcF(\irred{\mu} \boxtimes \mathsf{F}_{\gamma}) \cong \mcF(\irred{\mu+\lambda} \boxtimes \mathsf{F}_{\gamma+a_r\lambda})$ for all $\lambda \in rP$. The monodromy (double braiding) acts on pairs of irreducible modules $\mcF(\irred{\mu} \boxtimes \mathsf{F}_{\gamma})$, $\mcF(\irred{\mu'} \boxtimes \mathsf{F}_{\gamma'})$ as $M_{\mcF(\irred{\mu} \boxtimes \mathsf{F}_{\gamma}),\mcF(\irred{\mu'} \boxtimes \mathsf{F}_{\gamma'})}=q^{2\langle \mu, \mu' \rangle+2r\langle \gamma,\gamma' \rangle}$, so $\mcF(\irred{\mu} \boxtimes \mathsf{F}_{\gamma})$ is transparent if and only if 
\begin{equation}\label{bptrans}
2\langle \mu, \mu' \rangle + 2r \langle \gamma, \gamma' \rangle \in 2r\mbbZ
\end{equation}
for all $\mu',\gamma' \in \mfh^*$ such that $\langle \lambda, \mu'+ra_r\gamma' \rangle \in \mbbZ$ for all $\lambda \in P$. It is easy to see that $\mcF(\irred{\mu} \boxtimes \mathsf{F}_{\gamma}) \in \mathrm{Rep}^0\mfB^{a_r}_{rP}$ implies $\mcF(\irred{\mu+\alpha} \boxtimes \mathsf{F}_{\gamma+\frac{1}{ra_r}\beta}) \in \mathrm{Rep}^0\mcA_{rP}$ for all $\alpha, \beta \in Q$. It then follows from Equation \eqref{bptrans} that $\mcF(\irred{\mu} \boxtimes \mathsf{F}_{\gamma})$ is transparent if and only if $\mu \in rP$ and $\gamma \in ra_rP$. Let $\mu=r\tilde{\mu}$ and $\gamma=ra_r\tilde{\gamma}$ where $\tilde{\mu},\tilde{\gamma} \in P$. Then,
\begin{align*}
& & 2\langle \mu, \mu' \rangle+2r \langle \gamma, \gamma' \rangle &\in 2r\mbbZ \\
\Leftrightarrow & & 2r \langle \tilde{\mu},\mu'\rangle+2r^2a_r \langle \tilde{\gamma} , \gamma' \rangle &\in 2r\mbbZ \\
\Leftrightarrow & & \langle \tilde{\mu},\mu' \rangle +\langle \tilde{\gamma},\mu' \rangle-\langle \tilde{\gamma},\mu' \rangle+ra_r \langle \tilde{\gamma} ,\gamma' \rangle &\in \mbbZ\\
\Leftrightarrow & & \langle \tilde{\mu}-\tilde{\gamma}, \mu' \rangle+\langle \tilde{\gamma}, \mu'+ra_r\gamma' \rangle &\in \mbbZ\\
\Leftrightarrow & & \langle \tilde{\mu}-\tilde{\gamma}, \mu' \rangle &\in \mbbZ\\
\end{align*}
where we have used the fact that $\langle \lambda, \mu'+ra_r \gamma' \rangle \in \mbbZ$ for all $\lambda \in P$ and $\gamma' \in P$. Hence, $\mcF(\irred{\mu} \boxtimes \mathsf{F}_{\gamma})$ is transparent if and only if $\langle \tilde{\mu} -\tilde{\gamma},\mu' \rangle \in \mbbZ$ for all $\mu'$ such that $\mcF(\irred{\mu'} \boxtimes \mathsf{F}_{\gamma'}) \in \mathrm{Rep}^0\mfB^{a_r}_{rP}$ for some $\gamma' \in \mfh^*$. This clearly never holds as we can always choose a $\mu'$ not in the root lattice $Q$. Hence, there are no transparent objects and the argument in Proposition \ref{rib} can be applied to show that the M\"{u}ger center is trivial. We therefore make the following conjecture for the full subcategory $\mathrm{Rep}_{\langle s \rangle}B_Q(r)$ of $B_Q(r)$-modules generated by irreducibles:
\begin{proposition}\label{bpao}
$\mathrm{Rep}^0\mfB_{rP}^{a_r}$ is non-degenerate, ribbon if r is odd or $\rho \in Q$, and the irreducible modules are
\[ \{ \mcF(\irred{\mu} \boxtimes \mathsf{F}_{\gamma}) \; | \,  \mu,\gamma \in \mfh^*, \, \mathrm{and} \, \mu+ra_r\gamma \in Q \; \} \] 
with relations $\mcF( \irred{\mu}\boxtimes \mathsf{F}_{\gamma}) \cong \mcF(\irred{\mu+\lambda} \boxtimes \mathsf{F}_{\gamma+a_r\lambda})$ for all $\lambda \in rP$, where $a_r=\sqrt{-1/r}$.
\end{proposition}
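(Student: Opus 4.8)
The plan is to derive all four assertions — the classification of irreducibles, the identifications among them, the ribbon criterion, and non-degeneracy — by running the general machinery of Section~3 on the commutative algebra object $\mfB^{a_r}_{rP}$, whose commutativity is guaranteed by Corollary~\ref{Bpalg} since $1+ra_r^2=0$.

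First I would classify the irreducibles. The base category $\mcC\boxtimes\mcH$ has the property that every indecomposable has a simple subobject ($\mcC$ is of finite length and $\mcH$ is semisimple), so Proposition~\ref{simpleinduction} applies: the simple objects of $\mathrm{Rep}^0\mfB^{a_r}_{rP}$ are exactly the inductions $\mcF(\irred{\mu}\boxtimes\mathsf{F}_\gamma)$ of those simple currents $\irred{\mu}\boxtimes\mathsf{F}_\gamma$ whose induction is local. By Lemma~\ref{ssbraid} and the Heisenberg braiding~\eqref{Hbraid}, the monodromy of a generator $\mbbC_\lambda\boxtimes\mathsf{F}_{a_r\lambda}$ ($\lambda\in rP$) with $\irred{\mu}\boxtimes\mathsf{F}_\gamma$ is the scalar $q^{2\langle\lambda,\mu\rangle}e^{2\pi i a_r\langle\lambda,\gamma\rangle}$. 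Writing $\lambda=r\tilde\lambda$ and using $q^{2r}=e^{2\pi i}$, locality becomes $\langle\tilde\lambda,\mu+ra_r\gamma\rangle\in\mbbZ$ for all $\tilde\lambda\in P$, i.e. $\mu+ra_r\gamma$ lies in the dual lattice $P^{\ast}=Q$; this is exactly the index set in the statement. The identifications then follow from the standard fact that $\mcF(X)\cong\mcF(Y)$ whenever $X$ and $Y$ differ by tensoring with an algebra summand: since $\mbbC_\lambda\otimes\irred{\mu}\cong\irred{\mu+\lambda}$ and $\mathsf{F}_{a_r\lambda}\otimes\mathsf{F}_\gamma\cong\mathsf{F}_{\gamma+a_r\lambda}$, I obtain $\mcF(\irred{\mu}\boxtimes\mathsf{F}_\gamma)\cong\mcF(\irred{\mu+\lambda}\boxtimes\mathsf{F}_{\gamma+a_r\lambda})$ for all $\lambda\in rP$.

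For the ribbon property I would invoke the already-computed twist $\theta_{\mbbC_\lambda\boxtimes\mathsf{F}_{a_r\lambda}}=q^{2(1-r)\langle\lambda,\rho\rangle}\,\mathrm{Id}$ together with \cite[Proposition~2.86]{CKM}: $\mathrm{Rep}^0\mfB^{a_r}_{rP}$ is ribbon precisely when this twist is the identity on every summand, i.e. $(1-r)\langle\tilde\lambda,\rho\rangle\in\mbbZ$ for all $\tilde\lambda\in P$. Since $\rho\in\tfrac12 Q$ gives $\langle\tilde\lambda,\rho\rangle\in\tfrac12\mbbZ$, this holds when $1-r$ is even (that is, $r$ odd) or when $\rho\in Q$, yielding the stated criterion.

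The last and most delicate part is non-degeneracy. Here I would first show, at the level of simples, that the only transparent object is the unit $\mfB^{a_r}_{rP}$. Using~\eqref{bptrans}, a local simple $\mcF(\irred{\mu}\boxtimes\mathsf{F}_\gamma)$ is transparent iff $q^{2\langle\mu,\mu'\rangle+2r\langle\gamma,\gamma'\rangle}=1$ for every local pair $(\mu',\gamma')$; exploiting the freedom that $\mu'$ ranges over all of $\mfh^{\ast}$ (any $\mu'$ completes to a local pair by solving for $\gamma'$), I would extract $\mu\in rP$ and $\gamma\in ra_rP$, write $\mu=r\tilde\mu$, $\gamma=ra_r\tilde\gamma$, and reduce transparency to $\langle\tilde\mu-\tilde\gamma,\mu'\rangle\in\mbbZ$ for all $\mu'\in\mfh^{\ast}$, which forces $\tilde\mu=\tilde\gamma$ and hence, via the identifications above, $\mcF(\irred{\mu}\boxtimes\mathsf{F}_\gamma)\cong\mfB^{a_r}_{rP}$. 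To upgrade this to triviality of the whole M\"uger center I would repeat the argument of Proposition~\ref{rib}: any transparent object has all composition factors transparent, hence is an iterated self-extension of the unit, and such extensions split because restricting along $\mcG$ and applying Lemma~\ref{ext} (which controls $\mathrm{Ext}^1$ between simple currents in the $\mcC$-factor, the $\mcH$-factor being semisimple) produces enough homomorphisms. The main obstacle is exactly this non-degeneracy step: one must verify that the continuous Heisenberg labels impose genuine constraints rather than spurious ones when extracting $\mu\in rP$ and $\gamma\in ra_rP$, and must confirm that the hypothesis of Lemma~\ref{ext} is available in the $\mcC$-factor so that the self-extensions of the unit really do split.
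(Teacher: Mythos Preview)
Your proposal is correct and follows essentially the same route as the paper: locality via the monodromy computation, identifications via tensoring by algebra summands, ribbon via the twist and \cite[Proposition~2.86]{CKM}, and non-degeneracy via reducing to $\langle\tilde\mu-\tilde\gamma,\mu'\rangle\in\mbbZ$ and then invoking the extension-splitting argument of Proposition~\ref{rib}. The one place where the paper is more explicit than your sketch is the extraction of $\mu\in rP$ and $\gamma\in ra_rP$: varying $\mu'$ alone with $\gamma'$ solved from it only yields a coupled constraint on $\mu-\tfrac{1}{a_r}\gamma$, so the paper instead observes that locality of $(\mu',\gamma')$ is preserved under independent shifts $(\mu',\gamma')\mapsto(\mu'+\alpha,\gamma'+\tfrac{1}{ra_r}\beta)$ for $\alpha,\beta\in Q$, and these two separate degrees of freedom in the test objects are what give $\mu\in rP$ and $\gamma\in ra_rP$ individually --- exactly the point you flagged as the main obstacle.
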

We expect that $\mathrm{Rep}^0\mfB_{rP}^{a_r}$ and the category $\mathrm{Rep}_{\langle s \rangle} B_Q(r)$ generated by irreducible $B_Q(r)$-modules are ribbon equivalent.


\begin{thebibliography}{CGP00}


\bibitem[A1]{A} D. Adamovic, Classification of irreducible modules of certain subalgebras of free boson vertex algebra, \doi{10.1016/j.jalgebra.2003.07.011}{J. Algebra 270 (2003), 115-132}.
\vspace{-4mm}
\bibitem[A2]{A2} D. Adamovic, A realization of certain modules for the $N=4$ superconformal algebra and the affine Lie algebra $A_2(1)$, \doi{10.1007/s00031-015-9349-2}{Transformation Groups, Vol. 21, No. 2 (2016) 299-327}.
\vspace{-4mm}
\bibitem[ABG]{ABG} 
S. Arkhipov, R. Bezrukavnikov and V. Ginzburg, Quantum groups, the loop Grassmannian, and the Springer resolution, \doi{10.1090/S0894-0347-04-00454-0}{J. Amer. Math. Soc. 17 (2004), 595-678}. 
\vspace{-4mm}
\bibitem[Ara]{Ara} T. Arakawa,  Rationality of W-algebras: Principal nilpotent cases. \href{https://www.jstor.org/stable/24523343}{Annals of Mathematics, 182(2), second series, 565-604}. 
\vspace{-4mm}
\bibitem[ACGY]{ACGY}
D.~Adamovic, T.~Creutzig, N.~Genra and J.~Yang, The vertex algebras $\mathcal R^{(p)}$ and $\mathcal V^{(p)}$, \href{https://arxiv.org/abs/2001.08048}{arXiv:2001.08048}.
\vspace{-4mm}
\bibitem[ACKR]{ACKR} J. Auger, T. Creutzig, S. Kanade, M. Rupert, Braided Tensor Categories related to $B_p$ Vertex Algebras, \doi{10.1007/s00220-020-03747-8}{Commun. Math. Phys. (2020)}. 
\vspace{-4mm}
\bibitem[AD]{AD}
P.~C.~Argyres and M.~R.~Douglas, New phenomena in SU(3) supersymmetric gauge theory, \doi{10.1016/0550-3213(95)00281-V}{Nucl. Phys. B \textbf{448} (1995), 93-126}.
\vspace{-4mm}
\bibitem[AM]{AM} D. Adamovic and A. Milas, On the triplet vertex algebra W(p), \doi{10.1016/j.aim.2007.11.012}{Adv. Math. 217 (2008), 2664}.
\vspace{-4mm}
\bibitem[AMW]{AMW} D.~Adamovic, A.~Milas and Q.~Wang, On parafermion vertex algebras of $\frak{sl}(2)_{-3/2}$ and $\frak{sl}(3)_{-3/2}$, \href{https://arxiv.org/abs/2005.02631}{arXiv:2005.02631}.
\vspace{-4mm}
\bibitem[AR]{AR} J. Auger, M. Rupert, {\it On Infinite Order Simple Current Extensions of Vertex Operator Algebras}, \doi{10.1090/conm/711}{Contemporary Mathematics (2018)  711}.
\vspace{-4mm}
\bibitem[BLLPRR]{Beem}
C.~Beem, M.~Lemos, P.~Liendo, W.~Peelaers, L.~Rastelli and B.~C.~van Rees, Infinite Chiral Symmetry in Four Dimensions, \doi{10.1007/s00220-014-2272-x}{Commun. Math. Phys. \textbf{336} (2015) no.3, 1359-1433}.
\vspace{-4mm}
\bibitem[BN]{BN} M.~Buican and T.~Nishinaka, On Irregular Singularity Wave Functions and Superconformal Indices, \doi{10.1007/JHEP09(2017)066}{JHEP \textbf{09} (2017), 066}.
\vspace{-4mm}
\bibitem[BM]{BM} K. Bringmann and A. Milas, W-algebras, higher rank false theta functions, and quantum dimensions, \doi{10.1007/s00029-016-0289-z}{Sel. Math. New Ser. (2017) 23: 1249.}
\vspace{-4mm}
\bibitem[C1]{C1} T. Creutzig, Logarithmic W-algebras and Argyres-Douglas theories at higher rank, \href{https://link.springer.com/article/10.1007/JHEP11(2018)188}{Journal of High Energy Physics volume 2018, Article number: 188 (2018)}.
\vspace{-4mm}
\bibitem[C2]{C2} T. Creutzig, W-algebras for Argyres-Douglas theories. \href{https://doi.org/10.1007/s40879-017-0156-2}{European Journal of Mathematics 3, 659-690 (2017)}. 
\vspace{-4mm}
\bibitem[C3]{C3} T. Creutzig, Fusion categories for affine vertex algebras at admissible levels. \href{https://doi.org/10.1007/s00029-019-0479-6}{Sel. Math. New Ser. 25, 27 (2019)}. 
\vspace{-4mm}
\bibitem[CCG]{CCG}
K.~Costello, T.~Creutzig and D.~Gaiotto, Higgs and Coulomb branches from vertex operator algebras, \doi{10.1007/JHEP03(2019)066}{JHEP \textbf{03} (2019), 066}.
\vspace{-4mm}
\bibitem[CGa]{CGa}
T.~Creutzig and D.~Gaiotto, Vertex Algebras for S-duality, \href{https://arxiv.org/abs/1708.00875}{arXiv:1708.00875}.
\vspace{-4mm}
\bibitem[CGan]{CGan}
T.~Creutzig and T.~Gannon, Logarithmic conformal field theory, log-modular tensor categories and modular forms, \doi{10.1088/1751-8121/aa8538}{J. Phys. A \textbf{50} (2017) no.40, 404004}.
\vspace{-4mm}
\bibitem[CGL]{CGL}
T.~Creutzig, D.~Gaiotto and A.~R.~Linshaw, S-duality for the large $N=4$ superconformal algebra, \doi{10.1007/s00220-019-03673-4}{Commun. Math. Phys. \textbf{374} (2020) no.3, 1787-1808}.
\vspace{-4mm}
\bibitem[CGP]{CGP} F. Costantino, N. Geer, B. Patureau-Mirand, {\it Some Remarks on the Unrolled Quantum Group of $\mathfrak{sl}$(2)}. \doi{10.1016/j.jpaa.2014.10.012}{J. Pure Appl. Algebra 219 (2015), no. 8, 3238--3262}. 
\vspace{-4mm}
\bibitem[CGR]{CGR} T. Creutzig, A. M. Gainutdinov, I. Runkel, {\it A quasi-Hopf algebra for the triplet vertex operator algebra}. \doi{10.1142/S021919971950024X}{Commun. Contemp. Math Vol. 22, No. 03, 1950024 (2020)}.  
\vspace{-9mm}
\bibitem[CGS1]{CGS1} C.~Cordova, D.~Gaiotto and S.~Shao, Surface Defects and Chiral Algebras, \doi{10.1007/JHEP05(2017)140}{JHEP \textbf{05} (2017), 140}.
\vspace{-4mm}
\bibitem[CGS2]{CGS2} C.~Cordova, D.~Gaiotto and S.~Shao,  Surface Defect Indices and 2d-4d BPS States, \doi{10.1007/JHEP12(2017)078}{JHEP \textbf{12} (2017), 078}.
\vspace{-4mm}
\bibitem[CHY]{CHY}
T.~Creutzig, Y.~Huang and J.~Yang, Braided tensor categories of admissible modules for affine Lie algebras, \doi{10.1007/s00220-018-3217-6}{
Commun. Math. Phys. \textbf{362} (2018) no.3, 827-854}.
\vspace{-4mm}
\bibitem[CJHRY]{CJHRY}
T.~Creutzig, C.~Jiang, F.~O.~Hunziker, D.~Ridout and J.~Yang, Tensor Categories arising from the Virasoro Algebra, \href{https://arxiv.org/abs/2002.03180}{arXiv:2002.03180}.
\vspace{-4mm}
\bibitem[CKL]{CKL} T. Creutzig, S. Kanade, A. Linshaw,  {\it Simple Current Extensions Beyond Semi-Simplicity}, \doi{10.1142/S0219199719500019}{Commun. Contemp. Math. Vol. 22, No. 01, 1950001 (2020)}. 
\vspace{-4mm}
\bibitem[CKLR]{CKLR} 
T.~Creutzig, S.~Kanade, A.~R.~Linshaw and D.~Ridout, Schur-Weyl Duality for Heisenberg Cosets, \doi{10.1007/s00031-018-9497-2}{Transformation Groups 24, 301-354 (2019)}. 
\vspace{-4mm}
\bibitem[CKM]{CKM} T. Creutzig, S. Kanade, R. McRae, {\it Tensor categories for vertex operator superalgebra extensions}, \href{https://arxiv.org/abs/1705.05017}{arXiv:1705.05017}
\vspace{-4mm}
\bibitem[CM1]{CM} T. Creutzig and A. Milas, Higher rank partial and false theta functions and representation theory, \href{https://doi.org/10.1016/j.aim.2017.04.027}{Adv. Math. 314 (2017) 203-227.}
\vspace{-4mm}
\bibitem[CM2]{CM2}
T.~Creutzig and A.~Milas, False Theta Functions and the Verlinde formula, \doi{10.1016/j.aim.2014.05.018}{Adv. Math. \textbf{262} (2014), 520-545}.
\vspace{-4mm}
\bibitem[CMR]{CMR} T. Creutzig, A. Milas, and M. Rupert, Logarithmic Link invariants of $\overline{U}_q^H(\mathfrak{sl}_2)$ and asymptotic dimensions of singlet vertex algebras, \href{https://doi.org/10.1016/j.jpaa.2017.12.004}{J.Pure Appl.Algebra 222 (2018) 3224-3247}.
\vspace{-4mm}
\bibitem[CR]{CR}
T.~Creutzig and D.~Ridout, Logarithmic Conformal Field Theory: Beyond an Introduction, \doi{10.1088/1751-8113/46/49/494006}{J. Phys. A \textbf{46} (2013), 4006}.
\vspace{-4mm}
\bibitem[CRW]{CRW}
T.~Creutzig, D.~Ridout and S.~Wood, Coset Constructions of Logarithmic (1, p) Models, \doi{10.1007/s11005-014-0680-7}{Lett. Math. Phys. \textbf{104} (2014), 553-583}.
\vspace{-4mm}
\bibitem[CY]{CY}
T.~Creutzig and J.~Yang, Tensor categories of affine Lie algebras beyond admissible level, \href{https://arxiv.org/abs/2002.05686}{arXiv:2002.05686}.
\vspace{-4mm}
\bibitem[DGGPR]{DGGPR} M. De Renzi, A. Gainutdinov, N. Geer, B. Patureau-Mirand, I. Runkel, 3-Dimensional TQFTS from Non-Semisimple Modular Categories \href{https://arxiv.org/pdf/1912.02063.pdf}{[arXiv:1912.02063]}.
\vspace{-4mm}
\bibitem[FB]{FB} E.~Frenkel and D.~Ben-Zvi, Vertex algebras and algebraic curves, \href{https://bookstore.ams.org/surv-88-r}{Am. Math. Soc. (2004) 400 p}.
\vspace{-4mm}
\bibitem[FeGu]{FeGu}
B.~Feigin and S.~Gukov, VOA[$M_4$], \doi{10.1063/1.5100059}{J. Math. Phys. \textbf{61} (2020) no.1, 012302}.
\vspace{-4mm}
\bibitem[FrGa]{FrGa}
E.~Frenkel and D.~Gaiotto, Quantum Langlands dualities of boundary conditions, D-modules, and conformal blocks, \doi{10.4310/CNTP.2020.v14.n2.a1}{Commun. Num. Theor. Phys. \textbf{14} (2020) no.2, 199-313}.
\vspace{-4mm}
\bibitem[FGST1]{FGST1}
B.~Feigin, A.~Gainutdinov, A.~Semikhatov and I.~Tipunin, Modular group representations and fusion in logarithmic conformal field theories and in the quantum group center, \doi{10.1007/s00220-006-1551-6}{
Commun. Math. Phys. \textbf{265} (2006), 47-93}.
\vspace{-4mm}
\bibitem[FGST2]{FGST2}
B.~Feigin, A.~Gainutdinov, A.~Semikhatov and I.~Tipunin, Kazhdan-Lusztig correspondence for the representation category of the triplet W-algebra in logarithmic CFT, \doi{10.1007/s11232-006-0113-6}{Theor. Math. Phys. \textbf{148} (2006), 1210-1235}.
\vspace{-4mm}
\bibitem[FGST3]{FGST3}
B.~Feigin, A.~Gainutdinov, A.~Semikhatov and I.~Tipunin, Logarithmic extensions of minimal models: Characters and modular transformations, \doi{10.1016/j.nuclphysb.2006.09.019}{ Nucl. Phys. B \textbf{757} (2006), 303-343}.
\vspace{-4mm}
\bibitem[FGST4]{FGST4}
B.~Feigin, A.~Gainutdinov, A.~Semikhatov and I.~Tipunin, Kazhdan-Lusztig-dual quantum group for logarithmic extensions of Virasoro minimal models, \doi{10.1063/1.2423226}{J. Math. Phys. \textbf{48} (2007), 032303}.
\vspace{-4mm}
\bibitem[FL]{FL}
I.~Flandoli and S.~Lentner, Logarithmic conformal field theories of type $B_n,\ell=4$ and symplectic fermions, \doi{10.1063/1.5010904}{J. Math. Phys. \textbf{59} (2018) no.7, 071701}.
\vspace{-4mm}
\bibitem[FPYY]{FPYY}
L.~Fredrickson, D.~Pei, W.~Yan and K.~Ye, Argyres-Douglas Theories, Chiral Algebras and Wild Hitchin Characters, \doi{10.1007/JHEP01(2018)150}{JHEP \textbf{01} (2018), 150}.
\vspace{-4mm}
\bibitem[FT]{FT}
B.~Feigin and I.~Tipunin, Logarithmic CFTs connected with simple Lie algebras, \href{https://arxiv.org/abs/1002.5047}{arXiv:1002.5047}.
\vspace{-4mm}
\bibitem[FZ]{FZ} I. B. Frenkel and Y.Zhu, Vertex operator algebras associated to representations of affine and Virasoro algebras. \href{https://projecteuclid.org/euclid.dmj/1077294666}{Duke Math. J. 66 (1992), no. 1, 123--168}. 
\vspace{-4mm}
\bibitem[GLO]{GLO}
A.~M.~Gainutdinov, S.~Lentner and T.~Ohrmann, Modularization of small quantum groups, \href{https://arxiv.org/abs/1809.02116}{
[arXiv:1809.02116}.
\vspace{-4mm}
\bibitem[GP1]{GP1} N. Geer, B. Patureau-Mirand, Topological invariants from non-restricted quantum groups, \doi{10.2140/agt.2013.13.3305}{Algebraic \& Geometric Topology 13 (2013) 3305-3363.}
\vspace{-4mm}
\bibitem[GP2]{GP2} N. Geer, B. Patureau-Mirand, The trace on projective representations of quantum groups, \doi{10.1007/s11005-017-0993-4}{Lett Math Phys (2018) 108: 117.}
\vspace{-4mm}
\bibitem[GPT]{GPT} N. Geer B. Patureau-Mirand, V. Turaev, Modified quantum dimensions and re-normalized link invariants, \href{https://doi.org/10.1112/S0010437X08003795}{Compositio Math. 145, 1 (2009) 196-212}.
\vspace{-4mm}
\bibitem[GR]{GR}
A.~Gainutdinov and I.~Runkel, Symplectic fermions and a quasi-Hopf algebra structure on $\bar{U}_i sl(2)$, \doi{10.1016/j.jalgebra.2016.11.026}{J. Algebra \textbf{476} (2017), 415-458}.
\vspace{-4mm}
\bibitem[H1]{H1} Y.-Z. Huang, Vertex operator algebras and the Verlinde conjecture, \doi{10.1142/S0219199708002727}{ Commun. Contemp. Math. 10,  01, pp. 103-154 (2008)}.
\vspace{-4mm}
\bibitem[H2]{H2} Y.-Z. Huang,  Rigidity and modularity of vertex tensor categories, \doi{10.1142/S0219199708003083}{Commun. Contemp. Math. 10,  01,, pp. 871-911 (2008)}. 
\vspace{-4mm}
\bibitem[H3]{H3} Y.-Z. Huang, Cofiniteness conditions, projective covers and the logarithmic tensor product theory, \doi{10.1016/j.jpaa.2008.07.016}{J. Pure Appl. Algebra 213 (2009), 458-475}.
\vspace{-4mm}
 \bibitem[Kau]{Kau} H. G. Kausch, Extended conformal algebras generated by a multiplet of primary fields, \doi{10.1016/0370-2693(91)91655-F}{Phys. Lett. B 259 4 (1991) 448}.
 \vspace{-4mm}
 \bibitem[Kau2]{Kau2}
H.~G.~Kausch, Symplectic fermions, \doi{10.1016/S0550-3213(00)00295-9}{Nucl. Phys. B \textbf{583} (2000), 513-541}.
\vspace{-4mm}
\bibitem[KL1]{KL1} D Kazhdan and G Lusztig. Tensor structures arising from affine Lie algebras. I. \doi{10.2307/2152745}{J. Amer. Math. Soc., 6:905-947, 1993}.
\vspace{-4mm}
\bibitem[KL2]{KL2} D Kazhdan and G Lusztig. Tensor structures arising from affine Lie algebras. II. \doi{10.2307/2152746}{J. Amer. Math. Soc., 6:949-1011, 1993}.
\vspace{-4mm}
\bibitem[KL3]{KL3} D Kazhdan and G Lusztig. Tensor structures arising from affine Lie algebras. III. \doi{10.2307/2152762}{J. Amer. Math. Soc., 7:335-381, 1994}.
\vspace{-4mm}
\bibitem[KL4]{KL4}D Kazhdan and G Lusztig. Tensor structures arising from affine Lie algebras. IV. \doi{10.2307/2152763}{J. Amer. Math. Soc., 7:383-453, 1994}.
\vspace{-4mm}
\bibitem[KO]{KO} A. Kirillov Jr., V. Ostrik, {\it On a q-analogue of the McKay correspondence and the ADE classification of sl2 conformal field theories}. \doi{10.1006/aima.2002.2072}{Adv. Math. 171 (2002), no. 2, 183-227}. 
\vspace{-4mm}
\bibitem[KS]{KS} H. Kondo, Y. Saito, Indecomposable decomposition of tensor products of mod- ules over the restricted quantum universal enveloping algebra associated to sl(2), \doi{10.1016/j.jalgebra.2011.01.010}{J. Algebra 330 (2011) 103-129}. 
\vspace{-4mm}
\bibitem[L]{L}
S.~D.~Lentner, Quantum groups and Nichols algebras acting on conformal field theories, \href{https://arxiv.org/abs/1702.06431}{arXiv:1702.06431}.
\vspace{-4mm}
\bibitem[M]{McRae} R. McRae, On the tensor structure of modules for compact orbifold vertex operator algebras, \doi{10.1007/s00209-019-02445-z}{Math. Z. (2019)}.
\vspace{-4mm}
\bibitem[Mi]{Mi} A. Milas, Characters of Modules of Irrational Vertex Algebras, \href{https://doi.org/10.1007/978-3-662-43831-2_1}{Conformal Field Theory, Automorphic Forms and Related Topics. Contributions in Mathematical and Computational Sciences, vol 8. Springer, Berlin, Heidelberg (2014).}
\vspace{-4mm}
\bibitem[N]{N}  C. Negron, Log-modular quantum groups at even roots of unity and the quantum Frobenius I, \href{https://arxiv.org/pdf/1812.02277.pdf}{arxiv:1812.02277}.
\vspace{-4mm}
\bibitem[NY]{NY}
A.~Neitzke and F.~Yan, Line defect Schur indices, Verlinde algebras and $U(1)_r$ fixed points, \doi{10.1007/JHEP11(2017)035}{JHEP \textbf{11} (2017), 035}.
\vspace{-4mm}
\bibitem[Run]{Run}
I.~Runkel, A braided monoidal category for free super-bosons, \doi{10.1063/1.4868467}{J. Math. Phys. \textbf{55} (2014), 041702}.
\vspace{-4mm}
\bibitem[R]{R} M. Rupert, Categories of Weight Modules for Unrolled Restricted Quantum Groups at Roots of Unity, \href{https://arxiv.org/abs/1910.05922}{	arXiv:1910.05922}.
\vspace{-4mm}
\bibitem[S]{S} K. Shimizu, Non-degeneracy conditions for braided finite tensor categories, \href{https://www.sciencedirect.com/science/article/pii/S0001870819303962}{Adv. Math. 355 (2019), 106778}.
\vspace{-4mm}
\bibitem[Su]{Su} S. Sugimoto, On the Feigin-Tipunin conjecture, \href{http://arxiv.org/abs/2004.05769}{arXiv:2004.05769}.
\vspace{-4mm}
\bibitem[TW]{TW}
A.~Tsuchiya and S.~Wood, The tensor structure on the representation category of the $W_{p}$ triplet algebra, \doi{10.1088/1751-8113/46/44/445203}{J. Phys. A \textbf{46} (2013), 445203}.
\vspace{-4mm}
\bibitem[XZ]{XZ}
D.~Xie and P.~Zhao, Central charges and RG flow of strongly-coupled N=2 theory, \doi{10.1007/JHEP03(2013)006}{JHEP \textbf{03} (2013), 006}.



\end{thebibliography}
\end{document}